\documentclass[12pt]{amsart}


%

\usepackage{amsmath}
\usepackage{amsthm}
\usepackage{amssymb}
\usepackage{lscape,xcolor}
\usepackage{graphicx}
\usepackage{mathrsfs}
\usepackage{stmaryrd}
\usepackage{verbatim}
\usepackage{rotating}
\usepackage{tikz-cd}
\usepackage{amsrefs}
\usepackage[bookmarks=false]{hyperref}
\usepackage{euscript}
\usepackage[colorinlistoftodos]{todonotes}
\usepackage{subcaption}

\graphicspath{ {figures/} }

\usepackage{sseq}
\usepackage{xcolor}
\definecolor{limegreen}{rgb}{0.2, 0.8, 0.2}
\definecolor{darkmagenta}{rgb}{0.55, 0.0, 0.55}
\definecolor{lavenderrose}{rgb}{0.91, 0.33, 0.5}
\definecolor{goldenpoppy}{rgb}{0.99, 0.76, 0.0}
\definecolor{seagreen}{rgb}{0.11, 0.35, 0.02}
\definecolor{maroon}{RGB}{128,0,0}
\definecolor{darkviolet}{RGB}{148,0,211}
\definecolor{twelve}{RGB}{100,100,170}
\definecolor{thirteen}{RGB}{100,150,50}
\definecolor{fourteen}{RGB}{200,0,0}
\definecolor{fifteen}{RGB}{0,200,0}
\definecolor{sixteen}{RGB}{0,0,200}
\definecolor{pinkpurple}{RGB}{200,0,200}
\definecolor{teal}{RGB}{0,200,200}

\allowdisplaybreaks[1]


\newcommand{\mmod}{\! \sslash \!}

\newcommand{\mr}[1]{\mathrm{#1}}

\newcommand{\br}[1]{\overline{#1}}

\newcommand{\Z}{\mathbb{Z}}

\newcommand{\Q}{\mathbb{Q}}

\newcommand{\F}{\mathbb{F}}


\newcommand{\tmf}{\mathrm{tmf}}

\def \HF2{\mr{H}\F_2}

\newcommand{\B}{\scr{B}}


\DeclareMathOperator{\Ext}{Ext}


\def \AA0{\br{A \mmod A(0)}_*}
\def \AA2{A\mmod A(2)_*}
\def \AE2{(A\mmod E(2))_*}
\renewcommand{\AE}[1]{(A\mmod E(#1))_*}

\def \E2E1{(E(2)\mmod E(1))_*}
\newcommand{\otau}{\overline{\tau}}



 \newtheorem{thm}[equation]{Theorem}
 \newtheorem{cor}[equation]{Corollary}
 \newtheorem{lem}[equation]{Lemma}
 \newtheorem{prop}[equation]{Proposition}

 \newtheorem*{thm*}{Theorem}
 \newtheorem*{cor*}{Corollary}
 \newtheorem*{lem*}{Lemma}
 \newtheorem*{prop*}{Proposition}
  \newtheorem*{not*}{Notation}

 \theoremstyle{definition}
 \newtheorem{defn}[equation]{Definition}
 \newtheorem{ex}[equation]{Example}
 
 \newtheorem{rmk}[equation]{Remark}


\newtheorem*{defn*}{Definition}
\newtheorem*{ex*}{Example}
\newtheorem*{exs*}{Examples}
\newtheorem*{rmk*}{Remark}
\newtheorem*{claim*}{Claim}

\numberwithin{equation}{section}
\numberwithin{figure}{section}

\title{The Adams spectral sequence for 3-local $\tmf$}
\author{D.~ Culver}\address{University of Illinois, Urbana-Champaign}\email{dculver@illinois.edu}

\begin{document}

\maketitle

\begin{abstract}
	The purpose of this article is to record the computation of the homotopy groups of 3-local $\tmf$ via the Adams spectral sequence. 
\end{abstract}

\tableofcontents


\section{Introduction}

In this paper, we will carry out a computation of the homotopy groups of $\tmf$. The homotopy groups of $\tmf$ have been known for quite some time. For example, the computation of $\pi_*\tmf$ was explicitly written up in \cite{Bauer_2008}, though it was known even earlier to Hopkins and Mahowald (cf. \cite{MahowaldHopkins}). The usual approach to calculating the homotopy of $\tmf$ and its variants is via the Adams-Novikov spectral sequence (also referred to as the descent spectral sequence in this context). One advantage of this approach is that the Adams-Novikov $E_2$-term can be computed using the theory of elliptic curves. 

However, there are occasions where one wants to know the Adams spectral sequence for computing the homotopy groups of a spectrum. That is, one may want to know the Adams $E_2$-term, all differentials, and all hidden extensions. The purpose of this paper is to record the Adams spectral sequence for 3-local topological modular forms.

We should mention that the analogous calculation at the prime 2 is being carried out by Rognes and Bruner (\cite{BrunerRognes}). It is the author's understanding that their interest in that spectral sequence stemmed from their work on the topological Hochschild homology of $\tmf$. We speculate knowing the Adams spectral sequence at the prime 3 might be useful for similar reasons. 

\subsubsection*{Conventions}

In this article, we will implicitly assume that all spectra are 3-complete. Thus $\tmf$ refers, from here on out, to the 3-completion of the spectrum of topological modular forms. We will always denote the mod 3 Eilenberg-MacLane spectrum by $H$. Given a Hopf algebra $\Gamma$ and a comodule $C$ over $\Gamma$, we will abbreviate $\Ext_{\Gamma}(\F_3, C)$ by $\Ext_{\Gamma}(C)$. In the case when $\Gamma = A_*$, we will write $\Ext(C)$. If $C = H_*X$ for a spectrum $X$, then we will abbreviate further to $\Ext(X)$. We will always employ Adams indexing unless specifically stated otherwise. We let $\zeta_n$ denote $\chi \xi_n$ and $\otau_n$ denote $\chi \tau_n$ in the dual Steenrod algebra. We will use the symbol $\dot{=}$ to indicate equality up to a multiplicative unit. Finally, for a spectrum $X$, we let $E_r(X)$ denote the $r$th page of the mod 3 Adams spectral sequence for $X$. 	

\subsection{Outline of the paper}

Recall that the Adams spectral sequence is a convergent spectral sequence of the form 
\[
\Ext_{A_*}(\F_3, H_*\tmf)\implies \pi_*\tmf^{\wedge}_3.
\]
Thus, a necessary input is $H_*\tmf$. This was determined, for example, in \cite{supplementary}, where Rezk shows there is a short exact sequence of comodules
\[
0\to \Sigma^8\B\to H_*\tmf \to \B\to 0
\]
where $\B$ is a certain subalgebra of the dual Steenrod algebra $A_*$. This is the starting point of our calculation. We view this short exact sequence as giving a multiplicative filtration of $H_*\tmf$ by comodules, yielding an algebraic spectral sequence 
\[
E_1^{*,*,*} = \Ext_{A_*}(E_0H_*\tmf)\cong \Ext_{A_*}(\B)\otimes E(b_4)\implies \Ext_{A_*}(H_*\tmf).
\]
In \textsection\ref{sec: homology of tmf} we recall these details and establish a change-of-rings formula for $\Ext_{A_*}(\B)$. In \textsection\ref{sec: cohomology of Gamma} we use a Cartan-Eilenberg spectral sequence to compute $\Ext_{A_*}(\B)$. An expert in these affairs can safely ignore this section. In \textsection\ref{sec: Adams E_2-term}, we determine the Adams $E_2$-term. In section \textsection \ref{sec: rational homotopy of tmf} we discuss the rational homotopy of $\tmf$ and its relationship to modular forms.  Finally, in \textsection\ref{sec: differentials}, we establish the Adams differentials and derive $\pi_*\tmf$.

\subsubsection*{Acknowledgements}

The author would like to thank Mark Behrens for encouraging him to write up this computation, as well as Bob Bruner for helpful discussions. He would also like to thank Hood Chatham for creating such a wonderful \LaTeX package for drawing spectral sequences as well as for assistance in drawing some of the charts in this paper. Finally, the author thanks an anonymous referee for carefully reading earlier drafts of this paper. They caught many typos and errors and suggested improvements to the exposition, resulting in a better paper. 


\section{The mod 3 homology of $\tmf$}\label{sec: homology of tmf}

In this section we recall necessary facts about the mod 3 homology of $\tmf$. In \cite{supplementary}, it is shown that, as an algebra, the homology of $\tmf$ is given by 
\[
H_*\tmf\cong E(b_4)\otimes \B
\]
where $|b_4|=8$ and 
\[
\B:= \F_3[\zeta_1^3, \zeta_n\mid n\geq 2]\otimes E(\otau_n\mid n\geq 3)
\]
where the generators have the degrees
\begin{align*}
	|\xi_n| &= 2(3^n-1) & |\otau_n| &= 2\cdot 3^n-1 
\end{align*}
Here, the $\zeta_i$ are conjugate to Milnor's element $\xi_i$, and likewise $\otau_n$ is the conjugate of Milnor's $\tau_n$ \cite[Theorem 3.1.1]{greenbook}. One can easily check that $\B$ is a comodule algebra over $A_*$. Note also that for degree reasons, the class $b_4$ is an $A_*$-comodule primitive. Indeed, if 
\[
\alpha(b_4) = 1\otimes b_4 + \sum_i x_i'\otimes x_i''
\]
then the degrees of $x_i''$ are less than that of $b_4$. But $b_4$ is the lowest positive degree element of $H_*\tmf$. 

 Furthermore, Rezk shows that there is nontrivial extension of comodules

\begin{equation}\label{eqn: SES for Htmf}
	\begin{tikzcd}
		0\arrow[r] & \Sigma^8\B\arrow[r] & H_*\tmf \arrow[r] & \B\to 0.
	\end{tikzcd}
\end{equation}
Applying $\Ext_{A_*}(-)$ to this short exact sequence of comodules yields a long exact sequence in Ext. We regard this as a convergent spectral sequence

\[
\Ext(\Sigma^8\B)\oplus \Ext(\B)\implies \Ext(H_*\tmf).
\] 
The fact that \ref{eqn: SES for Htmf} is a nontrivial extension implies that this spectral sequence does not immediately collapse. Determining the differentials in this spectral sequence is the subject of section \ref{sec: Adams E_2-term}. Thus, it is apparent that we need to compute the Ext groups of $\B$. We will simplify this by establishing a change-of-rings formula. 

\begin{defn}
	Let $\Gamma$ be the Hopf algebra
	\[
	\Gamma:= A_*/(\zeta_1^3, \zeta_n, \otau_m\mid n\geq 2, m\geq 3)\cong \F_3[\zeta_1]/(\zeta_1^3)\otimes E(\otau_0, \otau_1, \otau_2)
	\]
	with the induced coproduct from the dual Steenrod algebra. 
\end{defn}

\begin{ex}
	In the dual Steenrod algebra, the coproduct on $\otau_2$ is given by 
	\[
	\psi(\otau_2) = \otau_2\otimes 1 + \otau_0\otimes \zeta_2+ \otau_1\otimes \zeta_1^3 + 1\otimes \otau_2.
	\]
	Thus, in $\Gamma$, $\otau_2$ is a Hopf algebra primitive. On the other hand, 
	\[
	\psi(\otau_1) = \otau_1\otimes 1 + \otau_0\otimes \zeta_1 + 1\otimes \otau_1. 
	\]
	Thus this Hopf algebra is not primitively generated.
\end{ex}

The proof of the following proposition is standard.

\begin{prop}
	There is an isomorphism
	\[
	\B\cong A_*\boxempty_\Gamma \F_3.
	\]
\end{prop}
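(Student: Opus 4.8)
The plan is to realize $A_*\boxempty_\Gamma\F_3$ as the subalgebra of $\Gamma$-coinvariants of $A_*$ and to prove equality by two inclusions. Explicitly, if $q\colon A_*\to\Gamma$ denotes the canonical quotient and $\psi$ the coproduct of $A_*$, then
\[
A_*\boxempty_\Gamma\F_3=\{x\in A_*\mid (1\otimes q)\psi(x)=x\otimes 1\},
\]
and since $\psi$ and $q$ are algebra maps, the coaction $\rho:=(1\otimes q)\psi$ makes this set a subalgebra of $A_*$. I would first show that every algebra generator of $\B$ is a coinvariant, which gives $\B\subseteq A_*\boxempty_\Gamma\F_3$, and then show that the two subalgebras have the same Poincaré series, which upgrades the inclusion to an equality.

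For the forward inclusion I would apply $\rho$ to the generators $\zeta_1^3$, $\zeta_n$ ($n\geq 2$) and $\otau_n$ ($n\geq 3$), using Milnor's coproduct formulas for the conjugate classes,
\[
\psi(\zeta_n)=\sum_{i+j=n}\zeta_i\otimes\zeta_j^{3^i},\qquad \psi(\otau_n)=\otau_n\otimes 1+1\otimes\otau_n+\sum_{i=0}^{n-1}\otau_i\otimes\zeta_{n-i}^{3^i}.
\]
The key observation is that $q$ kills every class occurring in the right-hand tensor factor of the cross terms: in $\Gamma$ one has $\zeta_1^3=0$, $\zeta_m=0$ for $m\geq 2$, and $\otau_m=0$ for $m\geq 3$, so any factor $\zeta_j^{3^i}$ with $j\geq 2$, or with $j=1$ and $i\geq 1$, maps to zero, as does $q(\otau_n)$ for $n\geq 3$. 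Hence only the leading term $x\otimes 1$ survives, so each generator is coinvariant. Because the coinvariants form a subalgebra and $\B$ is generated by these classes, this yields $\B\subseteq A_*\boxempty_\Gamma\F_3$.

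For the reverse inclusion I would invoke the structure theory of connected graded Hopf algebras over a field. Since $q$ is a surjection of such Hopf algebras, $A_*$ is cofree (equivalently injective) as a $\Gamma$-comodule, whence there is an isomorphism of graded $\F_3$-vector spaces $A_*\cong (A_*\boxempty_\Gamma\F_3)\otimes\Gamma$. On the other hand, comparing the explicit presentations gives a graded vector space isomorphism $A_*\cong\B\otimes\Gamma$; at the level of Poincaré series this is the identity $P(A_*)=P(\B)\cdot P(\Gamma)$, coming from the factorizations $\F_3[\zeta_1]\cong\F_3[\zeta_1^3]\otimes\F_3[\zeta_1]/(\zeta_1^3)$ and $E(\otau_n\mid n\geq 0)\cong E(\otau_n\mid n\geq 3)\otimes E(\otau_0,\otau_1,\otau_2)$. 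As $\Gamma$ is connected of finite type, $P(\Gamma)$ is an invertible power series, so it cancels to give $P(A_*\boxempty_\Gamma\F_3)=P(\B)$. Together with the inclusion of finite-type graded vector spaces proved above, this forces $\B=A_*\boxempty_\Gamma\F_3$.

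The step demanding the most care is this reverse inclusion, and in particular the bookkeeping of the dimension count. I would emphasize that $\B$ is \emph{not} a sub-Hopf-algebra of $A_*$; for instance $\psi(\zeta_2)=\zeta_2\otimes 1+\zeta_1\otimes\zeta_1^3+1\otimes\zeta_2$ contains the term $\zeta_1\otimes\zeta_1^3\notin\B\otimes\B$. Consequently one cannot recover $\B$ as a free module factor via the usual Milnor--Moore theorem for sub-Hopf-algebras; the count must instead be organized around the quotient Hopf algebra $\Gamma=A_*\mmod\B$ and the cofreeness of $A_*$ over it. The input that makes this legitimate is precisely that $q$ is a morphism of Hopf algebras, i.e.\ that $(\zeta_1^3,\zeta_n,\otau_m\mid n\geq 2,m\geq 3)$ is a Hopf ideal, which is exactly what is recorded in the definition of $\Gamma$.
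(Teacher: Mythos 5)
Your proposal is correct, but there is nothing in the paper to compare it against: the paper states this proposition with no argument at all, remarking only that ``the proof of the following proposition is standard.'' Your write-up is a complete and accurate rendering of that standard argument. The forward inclusion is verified exactly as it should be: your conjugate coproduct formulas agree with the ones the paper itself records for $\otau_1$ and $\otau_2$, and the observation that $q$ annihilates every right-hand tensor factor $\zeta_j^{3^i}$ with $j\geq 2$, or $j=1$ and $i\geq 1$, is precisely why the generators $\zeta_1^3$, $\zeta_n$ ($n\geq 2$), $\otau_n$ ($n\geq 3$) are coinvariant. The reverse inclusion via cofreeness of $A_*$ over the quotient Hopf algebra $\Gamma$ (the graded dual of Milnor--Moore freeness, valid since everything is connected and of finite type) and cancellation of the invertible series $P(\Gamma)$ is legitimate, and your warning that $\B$ is not a sub-Hopf-algebra --- so the count must be organized around the quotient $\Gamma$ rather than around a free-module decomposition over $\B$ --- is exactly the right point of care. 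One small streamlining worth knowing: once the generators of $\B$ are known to be coinvariant, the multiplication map $\B\otimes W\to A_*$, where $W\subset A_*$ is the span of the monomials $\zeta_1^a\otau_0^{e_0}\otau_1^{e_1}\otau_2^{e_2}$ with $0\leq a\leq 2$, $e_i\in\{0,1\}$, is an isomorphism of right $\Gamma$-comodules (with coaction $1\otimes\rho_W$, and $W\cong\Gamma$ via $q$); applying $-\boxempty_\Gamma\F_3$ then yields $A_*\boxempty_\Gamma\F_3\cong\B$ directly, absorbing your Poincar\'e series step into the same counting input without any appeal to injectivity.
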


We derive the following corollary from Theorem A1.3.12 of \cite{greenbook}.

\begin{cor}
	There is a change-of-rings isomorphism
	\[
	\Ext(\B)\cong \Ext_{\Gamma}(\F_3).
	\]
\end{cor}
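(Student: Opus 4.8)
The plan is to read off the isomorphism from the preceding proposition together with Theorem~A1.3.12 of \cite{greenbook}; the substance of the argument is checking that our data satisfy the hypotheses of that change-of-rings theorem. The proposition identifies $\B$ with the cotensor product $A_*\boxempty_\Gamma\F_3$, where $A_*$ acquires the structure of a right $\Gamma$-comodule through the composite $A_*\xrightarrow{\psi}A_*\otimes A_*\xrightarrow{1\otimes\pi}A_*\otimes\Gamma$ of the coproduct with the quotient map $\pi\colon A_*\twoheadrightarrow\Gamma$. Theorem~A1.3.12 provides, for any $\Gamma$-comodule $M$, a natural isomorphism $\Ext_{A_*}(\F_3,A_*\boxempty_\Gamma M)\cong\Ext_\Gamma(\F_3,M)$, valid whenever $A_*$ is coflat as a $\Gamma$-comodule. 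Specializing to $M=\F_3$ gives exactly $\Ext(\B)\cong\Ext_\Gamma(\F_3)$, so the entire matter reduces to verifying this coflatness hypothesis.

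To that end I would first note that $\Gamma$ is a genuine quotient Hopf algebra of $A_*$: the ideal generated by $\zeta_1^3$ and the $\zeta_n$ ($n\ge 2$) and $\otau_m$ ($m\ge 3$) is a Hopf ideal, as one confirms by the same kind of coproduct computation displayed above for $\otau_2$ and $\otau_1$. Granting this, the coflatness of $A_*$ over $\Gamma$ is classical. Since $A_*$ is a connected graded Hopf algebra over the field $\F_3$ and $\Gamma$ is a quotient Hopf algebra, the Milnor--Moore structure theorem exhibits $A_*$ as a cofree, hence extended, $\Gamma$-comodule; concretely there is an isomorphism of $\Gamma$-comodules $A_*\cong\Gamma\otimes\B$. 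An extended comodule is injective and coflat, which is precisely the input required by A1.3.12. This is the one nonformal step.

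With coflatness established, the remainder is the usual bookkeeping behind a change-of-rings isomorphism and I would not belabor it. Coflatness makes $A_*\boxempty_\Gamma(-)$ an exact functor, and it carries extended $\Gamma$-comodules to extended $A_*$-comodules; consequently it sends an injective resolution of $\F_3$ in $\Gamma$-comodules to an injective resolution of $\B=A_*\boxempty_\Gamma\F_3$ in $A_*$-comodules. Applying $\operatorname{Hom}_{A_*}(\F_3,-)$ and invoking the comodule form of Shapiro's lemma, namely the natural identification $\operatorname{Hom}_{A_*}(\F_3,A_*\boxempty_\Gamma I)\cong\operatorname{Hom}_\Gamma(\F_3,I)$ for injective $I$, matches the two complexes computing the respective Ext groups and yields the stated isomorphism. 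The main obstacle is thus entirely concentrated in the coflatness of $A_*$ over $\Gamma$; once that is in hand, everything else is formal.
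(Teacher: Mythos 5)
Your proposal is correct and follows the same route as the paper, which simply combines the proposition $\B\cong A_*\boxempty_\Gamma\F_3$ with Theorem~A1.3.12 of \cite{greenbook} and offers no further argument. The hypothesis-checking you supply --- that $\Gamma$ is a quotient Hopf algebra of $A_*$ and that $A_*$ is cofree, hence coflat, over $\Gamma$ by the dual Milnor--Moore structure theorem --- is exactly the standard justification the paper leaves implicit.
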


Thus we must compute the cohomology of the Hopf algebra $\Gamma$. This is done in the next section.


\section{Computing the cohomology of $\Gamma$}\label{sec: cohomology of Gamma}

In the last section we showed that the Ext groups of $\B$ are $\Ext_{\Gamma}(\F_3)$. Since $\Gamma$ is a finite Hopf algebra, there is hope of computing its cohomology. Recall that $A(1)$ is the subalgebra of the Steenrod algebra generated by the Bockstein $\beta$ and $\mathcal{P}^1$. Its dual is 
\[
A(1)_*\cong \F_3[\zeta_1]/(\zeta_1^3)\otimes E(\otau_0, \otau_1).
\]
In paticular, $A(1)_*$ is a sub-Hopf algebra of $\Gamma$. The following proposition relies on the material in the first appendix of \cite{greenbook}. We recommend the reader look at Definition A1.1.15. The following lemma is easily checked. 

\begin{lem}
	\label{prop: hopf extension}
	The following 
	\[
	A(1)_*\to \Gamma \to E(\otau_2)
	\]
	is a cocentral extension of Hopf algebras over $\F_3$.
\end{lem}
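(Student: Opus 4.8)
The plan is to check the three conditions making up the definition of a cocentral extension of Hopf algebras (Ravenel, Definition A1.1.15): that the left map realizes $A(1)_*$ as a normal sub-Hopf algebra of $\Gamma$, that the right map is the quotient $\Gamma\mmod A(1)_* := \Gamma/(A(1)_*^{+}\cdot \Gamma)$ and identifies $A(1)_*$ with the cotensor product $\Gamma\boxempty_{E(\otau_2)}\F_3$, and finally that this quotient is cocentral. Here the left map is the evident inclusion and the right map $\pi\colon\Gamma\to E(\otau_2)$ is the algebra map killing $\zeta_1,\otau_0,\otau_1$ and fixing $\otau_2$.

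For the first two conditions, that $A(1)_*$ is a sub-Hopf algebra is seen by inspecting coproducts: $\zeta_1$ and $\otau_0$ are primitive, while $\psi(\otau_1)=\otau_1\otimes 1+\otau_0\otimes\zeta_1+1\otimes\otau_1$ already lies in $A(1)_*\otimes A(1)_*$. Normality is automatic, since $\Gamma$ is a quotient of the graded-commutative Hopf algebra $A_*$ and hence commutative, so the left and right ideals generated by $A(1)_*^{+}$ coincide; consequently $\Gamma\mmod A(1)_*$ is a Hopf algebra, and as $A(1)_*^{+}$ is generated by $\zeta_1,\otau_0,\otau_1$, modding out by these generators leaves exactly $E(\otau_2)$. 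The coexactness identity $A(1)_*=\Gamma\boxempty_{E(\otau_2)}\F_3$ is then the standard fact (dual to normality) for such an extension over a field.

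The only substantive point is cocentrality. One must verify that the two $E(\otau_2)$-comodule structures on $\Gamma$ induced by $\pi$, namely the left coaction $(\pi\otimes\mathrm{id})\psi$ and the right coaction $(\mathrm{id}\otimes\pi)\psi$, agree after composing with the graded twist $T$. Both composites are algebra maps, so it suffices to test the identity on the generators $\zeta_1,\otau_0,\otau_1,\otau_2$. For the first three, $\pi$ kills the generator, so each coaction collapses to $1\otimes x$ on the left and $x\otimes 1$ on the right, which are interchanged by $T$ with no sign since one tensor factor has degree $0$. The decisive generator is $\otau_2$: by the Example, the cross terms $\otau_0\otimes\zeta_2$ and $\otau_1\otimes\zeta_1^3$ of its coproduct vanish in $\Gamma$ because $\zeta_2=\zeta_1^3=0$ there, so $\otau_2$ is primitive and both coactions send it to $\otau_2\otimes 1+1\otimes\otau_2$, again twist-symmetric.

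I anticipate no genuine obstacle; the statement is routine, as its phrasing advertises. The one observation carrying all the weight is the vanishing of the cross terms in $\psi(\otau_2)$, which is precisely what upgrades the plain extension to a cocentral one; the only bookkeeping is the graded signs in $T$, and these are trivial throughout because in every surviving tensor one factor is the unit.
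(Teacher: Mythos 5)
Your proof is correct. The paper gives no argument at all --- it declares the lemma ``easily checked,'' having already recorded in the preceding Example precisely the key fact your verification turns on (the cross terms $\otau_0\otimes\zeta_2+\otau_1\otimes\zeta_1^3$ of $\psi(\otau_2)$ vanish in $\Gamma$, so $\otau_2$ is primitive there) --- and your direct check on algebra generators, with normality and the cotensor identity automatic for a commutative connected Hopf algebra over a field, is exactly the routine verification the paper intends.
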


When one has an extension of Hopf algebras, one can consider the Cartan-Eilenberg spectral sequence. In general, if 
\[
(D, \Phi)\to (A, \Gamma)\to (A, \Sigma)
\]
is an extension of Hopf algebroids, $N$ is a left comodule over $\Gamma$, then there is a natural convergent spectral sequence of the form
\[
E_2^{f,s,t}= \Ext^{f,t}_\Phi(D, \Ext^s_\Sigma(A, N))\implies \Ext^{f+s,t}_{\Gamma}(A, N).
\]
Here, $f$ denotes the filtration degree, $s$ is the cohomological degree, and $t$ is the internal degree. The differentials are of the form
\[
d_r: E_r^{f,s,t}\to E_r^{f+r,s-r+1, t}.
\]
See A1.3.14 and A1.3.15 of \cite{greenbook} for details on this spectral sequence. Applied to our extension of Hopf algebras with $N=\F_3$, this spectral sequence takes on the form
\begin{equation}\label{eq: CESS}
E_2^{f,s,t} = \Ext^{f,t}_{A(1)_*}(\F_3, \Ext^s_{E(\otau_2)}(\F_3, \F_3))\implies \Ext^{f+s,t}_{\Gamma}(\F_3).
\end{equation}
Since $E(\otau_2)$ is a primitively generated exterior Hopf algebra, we have that 
\[
\Ext_{E(\otau_2)}(\F_3)\cong \F_3[v_2]
\]
where the $(s,t)$-bidegree of $v_2$ is $(1, 17)$. Note that since $\F_3$ is a comodule algebra, the Cartan-Eilenberg spectral sequence is multiplicative. 

In order to determine the $E_2$-page of this spectral sequence, we need to understand the coaction of $A(1)_*$ on $\F_3[v_2]$. As $\F_3[v_2]$ is a comodule algebra over $A(1)_*$, it is enough to determine the coaction on $v_2$. 

\begin{lem}
	Under the canonical $A(1)_*$-coaction on $\Ext_{E(\otau_2)}(\F_3)$, the element $v_2$ is a comodule primitive. 
\end{lem}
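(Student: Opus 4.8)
The plan is to prove the lemma by a pure internal-degree argument, using the explicit description of $\Ext_{E(\otau_2)}(\F_3)$ together with the vanishing range of $A(1)_*$. Write $\psi$ for the $A(1)_*$-coaction
\[
\psi\colon \Ext_{E(\otau_2)}(\F_3)\to A(1)_*\otimes \Ext_{E(\otau_2)}(\F_3).
\]
As with any comodule structure, $\psi$ preserves internal degree $t$. Since the class $v_2$ sits in bidegree $(s,t)=(1,17)$, I would write
\[
\psi(v_2)=\sum_i a_i\otimes c_i,
\]
where each term is homogeneous with $|a_i|+|c_i|=17$, $a_i\in A(1)_*$, and $c_i\in \Ext_{E(\otau_2)}(\F_3)$.

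First I would record the two relevant degree facts. On one hand, since $E(\otau_2)$ is exterior on a single generator in degree $17$, we have $\Ext_{E(\otau_2)}(\F_3)\cong \F_3[v_2]$ with $|v_2^k|=17k$; hence this algebra is nonzero precisely in the non-negative multiples of $17$. On the other hand, $A(1)_*=\F_3[\zeta_1]/(\zeta_1^3)\otimes E(\otau_0,\otau_1)$ has top class $\zeta_1^2\otau_0\otau_1$ in degree $8+1+5=14$, so $A(1)_*$ is concentrated in internal degrees $0\le *\le 14$ and in particular vanishes in degree $17$.

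Now I would rule out the non-primitive terms. Suppose some term $a_i\otimes c_i$ in $\psi(v_2)$ has $|a_i|>0$. Then $|c_i|=17-|a_i|\le 16$, and since $c_i$ lies in $\F_3[v_2]$ its internal degree must be a multiple of $17$; the only possibility is $|c_i|=0$, forcing $|a_i|=17$. But $A(1)_*$ has no nonzero element in degree $17$, a contradiction. Hence every surviving term has $|a_i|=0$, so $a_i\in\F_3\cdot 1$ and $c_i\in\F_3\cdot v_2$; that is, $\psi(v_2)=\lambda\cdot(1\otimes v_2)$ for some $\lambda\in\F_3$. Counitality of the coaction forces $\lambda=1$, so $\psi(v_2)=1\otimes v_2$ and $v_2$ is a comodule primitive.

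The only point requiring care is that the coaction is internal-degree-preserving and that one has the correct internal degrees of $v_2$ and of $A(1)_*$ in hand; there is no genuine computational obstacle. I would emphasize that this argument sidesteps computing the coaction explicitly by lifting a cobar representative of $v_2$ from the $E(\otau_2)$-cobar complex to the $\Gamma$-cobar complex: the mismatch between the $17$-fold periodicity of $\F_3[v_2]$ and the top degree $14$ of $A(1)_*$ simply leaves no room for a nontrivial coaction.
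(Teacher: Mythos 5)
Your proof is correct and is essentially the paper's own argument: the paper likewise observes that the top class $\zeta_1^2\otau_0\otau_1$ of $A(1)_*$ sits in degree $14$ while $v_2$ has internal degree $17$, and concludes that the coaction must be $1\otimes v_2$ for degree reasons. You have simply written out the degree bookkeeping (and the counitality step fixing the coefficient) that the paper leaves implicit.
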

\begin{proof}
Observe that the largest degree element of $A(1)_*$ is $\zeta_1^2\otau_0\otau_1$, which has degree 14. Since 
\[
\Ext_{E(\otau_2)}(\F_3)\cong \F_3[v_2], \,\,\,\, |v_2| = (1,17)
\]
the coaction on $v_2$ must be $1\otimes v_2$ for degree reasons.
\end{proof}

\begin{cor}
	The $E_2$-term of the Cartan-Eilenberg spectral sequence (CESS) is given by 
	\[
	E_2\cong \Ext_{A(1)_*}(\F_3)\otimes \F_3[v_2]
	\]
	where $v_2$ is in $(f,s,t)$-degree $(0,1, 17)$ and $\Ext^{a,t}_{A(1)_*}(\F_3)$ is in tridegree $(a, 0, t)$.
\end{cor}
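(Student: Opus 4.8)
The plan is to read off the $E_2$-term directly from the general formula \eqref{eq: CESS}, using the two facts already in hand: that $\Ext_{E(\otau_2)}(\F_3)\cong \F_3[v_2]$ as an algebra, and that $v_2$ is a comodule primitive for the induced $A(1)_*$-coaction. The only genuine work is to promote the primitivity of $v_2$ to a statement about the entire comodule $\F_3[v_2]$, to identify the resulting Ext groups, and then to keep track of the three gradings.

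First I would observe that $\F_3[v_2]$ is a \emph{trivial} $A(1)_*$-comodule. Since $\F_3[v_2]$ is a comodule algebra, its coaction $\psi\colon \F_3[v_2]\to A(1)_*\otimes \F_3[v_2]$ is a map of algebras, so
\[
\psi(v_2^k) = \psi(v_2)^k = (1\otimes v_2)^k = 1\otimes v_2^k
\]
for every $k\ge 0$. Hence each monomial $v_2^k$ is a comodule primitive, and as an $A(1)_*$-comodule $\F_3[v_2]\cong \bigoplus_{k\ge 0}\Sigma^{17k}\F_3$, where the $k$th summand additionally carries cohomological degree $s=k$ coming from the $E(\otau_2)$-factor.

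Next I would compute the $A(1)_*$-cohomology of this coefficient comodule. Because it splits as a direct sum of suspensions of the trivial comodule $\F_3$, and because comodule cohomology (computed via the cobar complex) commutes with direct sums in its coefficient variable, we obtain
\[
\Ext^{f,t}_{A(1)_*}(\F_3, \F_3[v_2]) \cong \bigoplus_{k\ge 0}\Ext^{f,t}_{A(1)_*}(\F_3, \Sigma^{17k}\F_3) \cong \big(\Ext_{A(1)_*}(\F_3)\otimes \F_3[v_2]\big)^{f,t}.
\]
Tracking the gradings then yields the asserted tridegrees: a class in $\Ext^{a,t}_{A(1)_*}(\F_3)$ pairs with $v_2^0=1\in \Ext^0_{E(\otau_2)}(\F_3)$, so it lives in tridegree $(a,0,t)$, while $v_2$ sits in $\Ext^0_{A(1)_*}$ of the $\Ext^1_{E(\otau_2)}$-summand, placing it in tridegree $(0,1,17)$.

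Finally, to upgrade this from an isomorphism of graded vector spaces to an isomorphism of algebras, I would invoke the multiplicativity of the Cartan--Eilenberg spectral sequence noted above. Since $v_2$ and all of its powers are comodule primitives, the product on $E_2$ restricts to ordinary polynomial multiplication on the $\F_3[v_2]$-factor and to the Yoneda product on the $\Ext_{A(1)_*}(\F_3)$-factor, with no cross terms, giving exactly the tensor-product algebra structure. I do not anticipate any serious obstacle here: the whole statement is a formal consequence of the primitivity lemma together with standard properties of Ext, so the only point demanding care is the bookkeeping of the three gradings $(f,s,t)$.
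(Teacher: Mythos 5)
Your proposal is correct and follows essentially the same route as the paper: the paper proves the primitivity of $v_2$ (the lemma immediately preceding this corollary) and then lets the corollary follow at once, exactly as you do. Your write-up simply makes explicit the routine steps the paper leaves implicit --- that primitivity plus the comodule-algebra structure makes $\F_3[v_2]$ a trivial $A(1)_*$-comodule, that $\Ext$ splits over the resulting direct sum of suspensions of $\F_3$, and that multiplicativity of the Cartan--Eilenberg spectral sequence gives the tensor-product algebra structure.
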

Thus we must determine the cohomology of $A(1)_*$. The May spectral sequence can be used for this purpose. Later, we will need to use May's convergence theorem to give a proof for Lemma \ref{lem: Massey prod}, so we collect some details about the spectral sequence here. The reader is referred to \cite[3.2]{greenbook} for further details. 

This spectral sequence is obtained by putting a filtration on 
\[
	A(1)_* = P(\zeta_1)/(\zeta_1^3)\otimes E(\otau_0, \otau_1).
\]
This filtration is defined by assigning the generators of $A(1)_*$ the following \emph{May weight}.
	\begin{itemize}
		\item $MF(\otau_0) = MF(\zeta_1) = 1$,
		\item $MF(\otau_1) = 3$.
	\end{itemize}
	The associated graded of this filtration is given by 
	\[
	E^0A(1)_* = P(\zeta_1)/(\zeta_1^3)\otimes E(\otau_0, \otau_1)
	\]
	but now with $\zeta_1, \otau_0, \otau_1$ as primitive elements. This produces a filtration on the cobar complex for $A(1)_*$, resulting in the May spectral sequence
	\[
	E_1^{m,s,t}\implies \Ext_{A(1)_*}^{s,t}(\F_3),
	\]
	with the following $E_1$-term, 
	\[
	E_1= \Ext_{E^0A(1)_*}(\F_3)\cong \Ext_{P(\zeta_1)/\zeta_1^3}(\F_3)\,\otimes\, \Ext_{E(\otau_0, \otau_1)}(\F_3)\cong E(\alpha_1)\,\otimes\, P(v_0,v_1, \beta).
	\]
	Here we are using the fact that 
	\[
	\Ext_{E(\otau_0, \otau_1)}(\F_3)\cong P(v_0,v_1)
	\]
	since $\otau_0, \otau_1$ are primitive and that 
	\[
	\Ext_{P(\zeta_1)/\zeta_1^3}(\F_3)\cong E(\alpha_1)\otimes P(\beta)
	\]
	since $\zeta_1$ is primitive. The tri-degrees of these classes in the May spectral sequence are recorded below:
	\begin{enumerate}
		\item $|\alpha_1| = (1, 1,4)$,
		\item $|\beta| = (3, 2, 12)$,
		\item $|v_0| = (1,1,1)$, 
		\item $|v_1| = (3,1,5)$.
	\end{enumerate}
	Moreover, the $\Ext$-groups of primitively generated truncated polynomial algebras also have the following Massey product
	\[
	\beta = \langle \alpha_1, \alpha_1, \alpha_1\rangle,
	\]
	see \cite[Lemma 3.2.4]{greenbook}. Indeed, $\beta$ can be represented in the cobar complex for $P(\zeta_1)/\zeta_1^3$ by 
	\[
	[\zeta_1^2\mid \zeta_1] - [\zeta_1\mid \zeta_1^2]
	\]
	which is precisely the Massey product above. Finally, the coproduct on $A(1)_*$ gives the $d_1$-differential
	\[
	d_1(v_1) = v_0\alpha_1.
	\]
	The rest of the $d_1$-differentials are propagated from this one and the multiplicativity of the May spectral sequence.
	
\begin{prop}\label{prop: Ext of A(1)}
	The algebra $\Ext_{A(1)_*}(\F_3)$ is given by 
	\[
	\F_3[v_0,v_1^3,\beta]\otimes E(\alpha_1, \alpha_2)/(v_0\alpha_1, v_0\alpha_2, \alpha_1\alpha_2-v_0\beta)
	\]
	where the $(s,t)$-bidegrees of the generators are given by 
	\begin{itemize}
		\item $|\alpha_1|=(1,4)$,
		\item $|\beta| = (2, 12)$,
		\item $|\alpha_2| = (2, 9)$,
		\item $|v_0| = (1,1)$,
		\item $|v_1| = (1,5)$
	\end{itemize}
\end{prop}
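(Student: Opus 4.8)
The plan is to run the May spectral sequence set up above, compute $E_2$ explicitly, and then resolve a single higher differential together with one hidden multiplicative extension.

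First I would compute $E_2=H(E_1,d_1)$ from $d_1(v_1)=v_0\alpha_1$. Since $v_0,\alpha_1,\beta$ are $d_1$-cycles, the Leibniz rule gives $d_1(v_1^2)\doteq v_0\alpha_1 v_1$ and $d_1(v_1^3)=0$; thus $v_0,v_1^3,\beta,\alpha_1$ are permanent cycles and $\alpha_2:=\alpha_1 v_1$ is a $d_1$-cycle. Every $d_1$-boundary is divisible by $v_0$, so the relations $v_0\alpha_1=d_1(v_1)$ and $v_0\alpha_2\doteq d_1(v_1^2)$ already hold on $E_2$. A direct bookkeeping of cycles and boundaries then shows that, as a module over $\F_3[v_0,v_1^3,\beta]$,
\[
E_2\cong \F_3[v_0,v_1^3,\beta]\ \oplus\ \F_3[v_1^3,\beta]\{\alpha_1,\alpha_2\}\ \oplus\ \F_3[v_0,v_1^3,\beta]\{\alpha_1 v_1^2\},
\]
with $v_0$ annihilating the middle summand, and where the last is a \emph{free} module on the class $\alpha_1 v_1^2$ of tridegree $(m,s,t)=(7,3,14)$.

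The main obstacle is that this free summand has no counterpart in the claimed answer and so must be wiped out, together with the class $v_0^2\beta$. Both are accomplished by the single differential
\[
d_2(\alpha_1 v_1^2)\ \doteq\ v_0^2\beta ,
\]
which I would establish by a direct cobar computation: lift $\alpha_1 v_1^2$ to the cochain $[\otau_1\mid\otau_1\mid\zeta_1]$, correct it by a May weight $7$ cochain so as to cancel the weight $6$ part of its cobar differential (which must be a $d_1$-boundary, as $\alpha_1 v_1^2$ is a $d_1$-cycle), and read off the weight $5$ leading term, which is $\doteq v_0^2\beta$ of tridegree $(5,4,14)$. Because $v_0,v_1^3,\beta,\alpha_1,\alpha_2$ are all $d_2$-cycles (their potential targets lie in empty tridegrees), multiplicativity propagates this to $d_2(\alpha_1 v_1^2\cdot x)\doteq v_0^2\beta\cdot x$ for all $x\in\F_3[v_0,v_1^3,\beta]$. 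As multiplication by $v_0^2\beta$ is injective on the domain $\F_3[v_0,v_1^3,\beta]$, the differential is injective on the free summand with image the ideal $(v_0^2\beta)$, so
\[
E_3\cong \bigl(\F_3[v_0,v_1^3,\beta]/(v_0^2\beta)\bigr)\ \oplus\ \F_3[v_1^3,\beta]\{\alpha_1,\alpha_2\}.
\]
A tridegree count (e.g.\ the tridegree $(4,15)$ receiving any $d_r(v_1^3)$ is now empty) shows there is no room for higher differentials, so $E_3=E_\infty$.

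Finally I would install the hidden multiplicative extension. On $E_\infty$ one has $\alpha_1\cdot\alpha_2=\alpha_1^2 v_1=0$, but in $\Ext_{A(1)_*}(\F_3)$ the product jumps in May filtration; to pin it down I would use the Massey product structure. From $\beta=\langle\alpha_1,\alpha_1,\alpha_1\rangle$ and the relation $v_0\alpha_1=0$, May's convergence theorem identifies $\alpha_2\doteq\langle\alpha_1,\alpha_1,v_0\rangle$ (the tridegrees agree), and the juggling formula for Massey products gives
\[
\alpha_1\alpha_2\ \doteq\ \alpha_1\langle\alpha_1,\alpha_1,v_0\rangle\ \doteq\ \langle\alpha_1,\alpha_1,\alpha_1\rangle v_0\ \doteq\ v_0\beta .
\]
Assembling $E_\infty$ with this extension yields the presentation $\F_3[v_0,v_1^3,\beta]\otimes E(\alpha_1,\alpha_2)/(v_0\alpha_1,\,v_0\alpha_2,\,\alpha_1\alpha_2-v_0\beta)$; as a consistency check one verifies that these relations force $v_0^2\beta=v_0(\alpha_1\alpha_2)=(v_0\alpha_1)\alpha_2=0$, exactly matching the vanishing produced by the $d_2$ above.
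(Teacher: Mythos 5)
Your proposal is correct, and it follows exactly the route the paper intends: the paper sets up this May spectral sequence ($E_1=E(\alpha_1)\otimes P(v_0,v_1,\beta)$ with $d_1(v_1)=v_0\alpha_1$) and then simply states the proposition, so your write-up supplies the details the paper suppresses. Your $E_2$ is right, including the free summand $\F_3[v_0,v_1^3,\beta]\{\alpha_1 v_1^2\}$, and you correctly isolate the two genuinely nontrivial points that the paper never makes explicit: the exotic differential $\alpha_1v_1^2\mapsto v_0^2\beta$ (needed to destroy that free summand together with the ideal $(v_0^2\beta)$) and the filtration-jumping extension $\alpha_1\alpha_2\doteq v_0\beta$. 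Your degree counts for the collapse at $E_3$ and for the absence of any further hidden extensions also check out, and your Massey-product argument is the same one the paper records later (its lemma $\alpha_2=\langle v_0,\alpha_1,\alpha_1\rangle$).

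Two caveats. First, minor bookkeeping in the cobar sketch: the correcting cochain that cancels the weight-$6$ part of $d[\otau_1|\otau_1|\zeta_1]$ must itself have May weight $6$ (not $7$), and that weight-$6$ part is a coboundary for the weight-preserving (associated-graded) differential, not a ``$d_1$-boundary''. Second, and more substantively, the one step you leave as a described-but-unexecuted computation is precisely the key differential. You can eliminate that computation by reordering your own argument: prove the Massey products first ($\alpha_2\doteq\langle v_0,\alpha_1,\alpha_1\rangle$ by May's convergence theorem from $d_1(v_1)=v_0\alpha_1$, then $\alpha_1\alpha_2\doteq\langle\alpha_1,\alpha_1,\alpha_1\rangle v_0=\beta v_0$ by juggling; both brackets have zero indeterminacy since the bidegrees $(1,5)$ and $(1,8)$ are empty already on $E_2$). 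These identities hold in $\Ext_{A(1)_*}(\F_3)$ itself, so $v_0^2\beta=(v_0\alpha_1)\alpha_2=0$ there. Since $v_0^2\beta$ is a nonzero product of permanent cycles on $E_2$, it cannot support a differential and therefore must be hit by one, and your own tridegree count shows $\alpha_1v_1^2$ is the only possible source. With that reordering your proof is complete without any chain-level cobar work.
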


A chart for this Ext group is given below. 

\begin{figure}[h]
	\includegraphics[width=\textwidth]{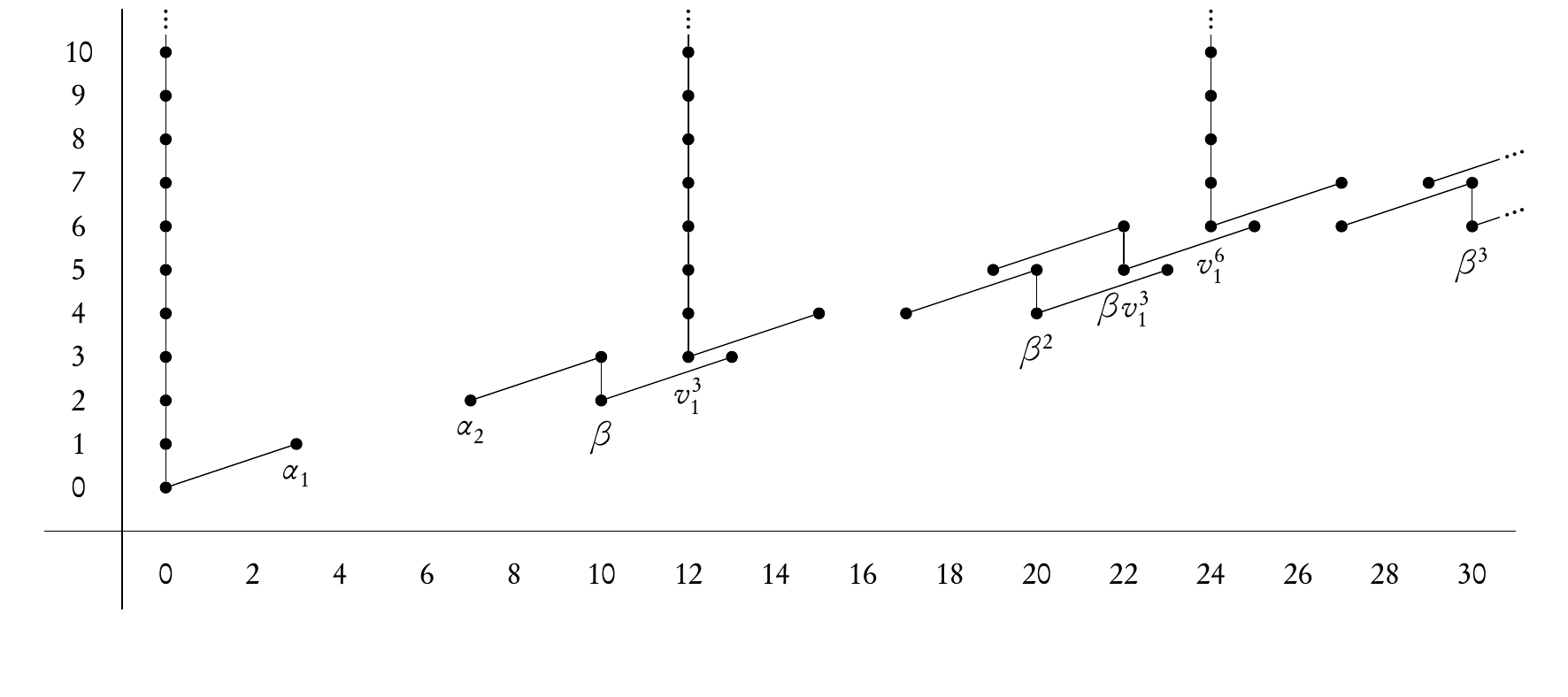}
	\centering 
\end{figure}


%
%
%
%
%
%

From now on, we will write $c_6$ for $v_1^3$. This is justified by Proposition \ref{prop: v_1^3 detects c6} below. For degree reasons, this spectral sequence collapses. Indeed, if we use $(t-s,s)$-indexing to depict the Cartan Eilenberg spectral sequence $E_2$-term, then a $d_r$-differential goes up vertically $s-r+1$-spaces. Since $\Ext_{A(1)_*}(\F_3)=0$ in degree $t-s=16$, it follows there cannot be any differentials on $v_2$.

\begin{cor}
	The cohomology of the Hopf algebra $\Gamma$ is given by 
	\[
	\F_3[v_0,c_6,v_2,\beta]\otimes E(\alpha_1, \alpha_2)/(v_0\alpha_1, v_0\alpha_2, v_0\alpha_2-v_0\beta).
	\]
\end{cor}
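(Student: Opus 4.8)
The plan is to read the answer off the Cartan--Eilenberg spectral sequence \eqref{eq: CESS}, whose collapse has just been verified, and then to resolve the multiplicative extensions. Since there are no differentials, $E_\infty = E_2$, and the preceding corollary gives $E_\infty \cong \Ext_{A(1)_*}(\F_3)\otimes\F_3[v_2]$ as tri-graded algebras. Feeding in the presentation of $\Ext_{A(1)_*}(\F_3)$ from Proposition~\ref{prop: Ext of A(1)} and setting $c_6 := v_1^3$ identifies the associated graded of $\Ext_\Gamma(\F_3)$ with $\F_3[v_0,c_6,v_2,\beta]\otimes E(\alpha_1,\alpha_2)/(v_0\alpha_1, v_0\alpha_2, \alpha_1\alpha_2 - v_0\beta)$. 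The entire content beyond this substitution is to upgrade this associated-graded statement to a genuine isomorphism of algebras.

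To do that I would produce the two generating subalgebras directly from the edge homomorphisms, each of which becomes an isomorphism onto its image once the sequence collapses. On the one hand the inflation edge map
\[
\Ext_{A(1)_*}(\F_3)=E_2^{*,0,*}=E_\infty^{*,0,*}\hookrightarrow \Ext_\Gamma(\F_3)
\]
is an injective map of algebras realizing $\Ext_{A(1)_*}(\F_3)$ as the bottom-filtration subalgebra; concretely, a product of two classes of Cartan--Eilenberg degree $s=0$ again has $s=0$ and lands in the deepest filtration, so it carries no higher-filtration correction and the relations of Proposition~\ref{prop: Ext of A(1)} hold on the nose. On the other hand the restriction-to-the-fiber edge map $\Ext_\Gamma(\F_3)\twoheadrightarrow \Ext_{E(\otau_2)}(\F_3)=\F_3[v_2]$ is a surjective ring map, so any lift $\hat v_2$ of $v_2$ satisfies $\hat v_2^{\,k}\neq 0$ for all $k$ and generates a genuine polynomial subalgebra $\F_3[\hat v_2]\subseteq\Ext_\Gamma(\F_3)$.

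With both subalgebras in hand, I would consider the multiplication map
\[
\Ext_{A(1)_*}(\F_3)\otimes\F_3[\hat v_2]\longrightarrow \Ext_\Gamma(\F_3),
\]
which is a homomorphism of graded-commutative algebras since both factors are subalgebras of $\Ext_\Gamma(\F_3)$. Filtering the source by the $A(1)_*$-degree (placing $\hat v_2$ in filtration zero), this map preserves the Cartan--Eilenberg filtration and induces the identity on associated graded objects, namely the isomorphism $E_\infty\cong E_\infty$ of the first paragraph. Since the filtration is exhaustive and bounded in each internal degree, a filtered map that is an isomorphism on associated graded is itself an isomorphism, and the stated presentation follows.

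I expect the only genuine obstacle to be organizational: identifying the correct edge homomorphisms and checking that the collapse turns each into an isomorphism onto its image, so that the abstract tensor decomposition of $E_\infty$ is realized by actual subalgebras of $\Ext_\Gamma(\F_3)$. Everything else---the collapse, the substitution of Proposition~\ref{prop: Ext of A(1)}, and the associated-graded bookkeeping---is immediate. I also note that the substitution produces the relation $\alpha_1\alpha_2 = v_0\beta$ inherited from Proposition~\ref{prop: Ext of A(1)}, which is the relation intended in the displayed presentation.
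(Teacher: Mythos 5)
Your proposal is correct, and it shares the two pillars of the paper's own proof --- the collapse of the Cartan--Eilenberg spectral sequence, and the fact that the inflation map $\Ext_{A(1)_*}(\F_3)\to\Ext_\Gamma(\F_3)$ is a genuine algebra map whose image admits no filtration corrections because $s=0$ classes sit in extreme filtration --- but you resolve the remaining extension problem by a different mechanism. The paper argues by contradiction on a putative hidden extension $v_2^{i+j}xy = v_2^{k}z$: comparing tridegrees forces $i+j>k$, and since $v_2$ is a non--zero divisor on $E_\infty$ (hence, by a filtration argument left implicit, on $\Ext_\Gamma(\F_3)$ itself) one cancels powers of $v_2$ and concludes $z=0$ from $xy=0$. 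You instead exploit the other edge homomorphism, the surjective corestriction $\Ext_\Gamma(\F_3)\twoheadrightarrow\Ext_{E(\otau_2)}(\F_3)=\F_3[v_2]$, to manufacture an honest polynomial subalgebra $\F_3[\hat v_2]$, and then verify that the multiplication map $\Ext_{A(1)_*}(\F_3)\otimes\F_3[\hat v_2]\to\Ext_\Gamma(\F_3)$ is a filtered algebra map inducing an isomorphism on associated graded, hence an isomorphism since the filtration is finite in each bidegree. This is more structural: it replaces the paper's case analysis and its non--zero-divisor lemma with the standard ``filtered map that is an isomorphism on associated graded is an isomorphism'' principle, at the modest cost of checking that both edge maps really are ring maps onto/from the claimed subquotients. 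You are also right that the relation in the displayed presentation should read $\alpha_1\alpha_2 - v_0\beta$ rather than $v_0\alpha_2 - v_0\beta$; that is a typo in the statement, and your substitution from Proposition \ref{prop: Ext of A(1)} gives the intended relation.
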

\begin{proof}
	We have already shown that this is the $E_\infty$-page. All that remains to be shown is that there are no hidden extensions. First, note that in \eqref{eq: CESS}, $f$ denotes the filtration degree. Thus $v_2$ is in filtration 0. The Cartan-Eilenberg spectral sequence arises from an increasing filtration on a cochain complex (see \cite[A1.3.14, A1.3.16]{greenbook} for details). Thus, if $x$ and $y$ are two classes, then a hidden extension from $xy$ to $z$ implies that the filtration of $z$ is larger than that of $xy$. Note also that there are no hidden extensions in the $\Ext_{A(1)_*}(\F_3)$-submodule generated by 1. This is seen, for example, by noticing that the map $A(1)_*\to \Gamma$ induces a map in $\Ext$, 
	\[
	\Ext_{A(1)_*}(\F_3)\to \Ext_\Gamma(\F_3).
	\]
	
	So, the only possible hidden extensions would involve products of classes in the ideal generated by $v_2$. Suppose $v_2^i x$ and $v_2^j y$ are classes with $x, y\in \Ext_{A(1)_*}(\F_3)$. Then if $v_2^ix\cdot v_2^j y = v_2^{i+j}xy=0$, there could be a hidden extension to a class in higher filtration, let's say there was an extension $v_2^{i+j}xy = v_2^k z$ with $z\in \Ext_{A(1)_*}(\F_3)$. Note that in the usual Adams indexing the bidegrees must agree. Let us name the tri-degrees of these classes,
	\begin{itemize}
		\item $|x| = (f_0, 0, t_0)$
		\item $|y| = (f_1, 0, t_1)$
		\item $|z| = (f_2, 0, t_2)$.
	\end{itemize}
	Then the tri-degree of $v_2^ix\cdot v_2^jy = v_2^{i+j}xy$ is
	\[
	(f_0+f_1, i+j, t_0+t_1+17(i+k))
	\]
	whereas the tri-degree for $v_2^kz$ is 
	\[
	(f_2, k, t_2+17k).
	\]
	These classes detect elements in $\Ext_{\Gamma}(\F_3)$ of bidegrees
	\[
	(f_0+f_1 + i+j, t_0+t_1+17(i+j))
	\]
	and 
	\[
	(f_2+k, t_2+17k)
	\]
	respectively. These bidegrees must be equal, but in order for this to be a hidden extension, we must have $f_0+f_1<f_2$. This implies that $i+j>k$. Since $v_2$ is not a zero divisor on $E_\infty$, it follows that it is not a zero divisor in $\Ext_{\Gamma}(\F_3)$. Thus, if we had the equality 
	\[
	v_2^{i+j}xy = v_2^k z
	\]
	in $\Ext_\Gamma$, then we would have 
	\[
	v_2^{i+j-k}xy = z
	\]
	in $\Ext_\Gamma$. However, on $E_\infty$, the only way we could have had $v_2^ix\cdot v_2^jy=0$ is if $xy=0$. Since $x, y\in \Ext_{A(1)_*}^{s,t}(\F_3)\cdot\{1\}$, it follows that $xy=0$ in $\Ext_{\Gamma}(\F_3)$. This implies that $z=0$. So there are no hidden extensions. 
\end{proof}


\section{Determining the Adams $E_2$-term}\label{sec: Adams E_2-term}

In this section we will determine the $E_2$-term of the Adams spectral sequence converging to $\pi_*\tmf$. The way this will be achieved is by applying the functor $\Ext(-)$ to the short exact sequence \eqref{eqn: SES for Htmf} to obtain a long exact sequence. Regarding this as a spectral sequence provides us with 
\[
E_1=\Ext(\Sigma^8\B)\oplus \Ext(\B)\implies \Ext(\tmf).
\]
For the purposes of this paper, we will refer to this spectral sequence as the \emph{algebraic spectral sequence}. 

\subsection{Algebraic differentials}


	The short exact sequence \eqref{eqn: SES for Htmf} gives a multiplicative filtration of $H_*\tmf$ by $A_*$-comodules. More precisely, we filter $H_*\tmf$ by setting $F_0H_*\tmf = H_*\tmf$ and $F_1H_*\tmf:= (b_4)$, the ideal generated by $b_4$. Since $b_4$ is a comodule primitive, this is a filtration by comodules. The algebraic spectral sequence is then the spectral sequence associated to this filtration.  Since the filtration is multiplicative, the spectral sequence is as well. Moreover, there is an isomorphism of $A_*$-comodule algebras
	\[
	E_0H_*\tmf \cong \B\otimes E(b_4)
	\]
	with $\scr{B}$ concentrated in filtration degree 0 and $b_4$ a comodule primitive in filtration degree 1. Thus
	\[
	E_1^{s,t,f}\cong \Ext_\Gamma(\F_3)\otimes E(b_4)
	\]
	as a graded ring. Note that the bidegree of $b_4$ is $(0, 8, 1)$. Since this spectral sequence arises from a long exact sequence in $\Ext$, there is only a $d_1$-differential which has the form 
	\[
	d_1: E_1^{s,t,0}\to E_1^{s+1,t,1}.
	\]
	In depicting charts we will always use the Adams indexing convention and use the axes $(t-s,s)$ and suppress the filtration degree.

Below is a chart (Figure \ref{sseq: E1 and E2 pages of algebraic spectral sequence}) for the $E_1$-page of the algebraic spectral sequence. The classes in blue are those in the coset for $b_4$ in the $E_1$-page. In other words, they have filtration 1 with respect to the filtration on $H_*\tmf$. Note that the tri-degree of the $d_1$-differential implies that all differentials originate from a black class and target a blue class.

\begin{figure}
	\includegraphics[angle=90, height=\textheight]{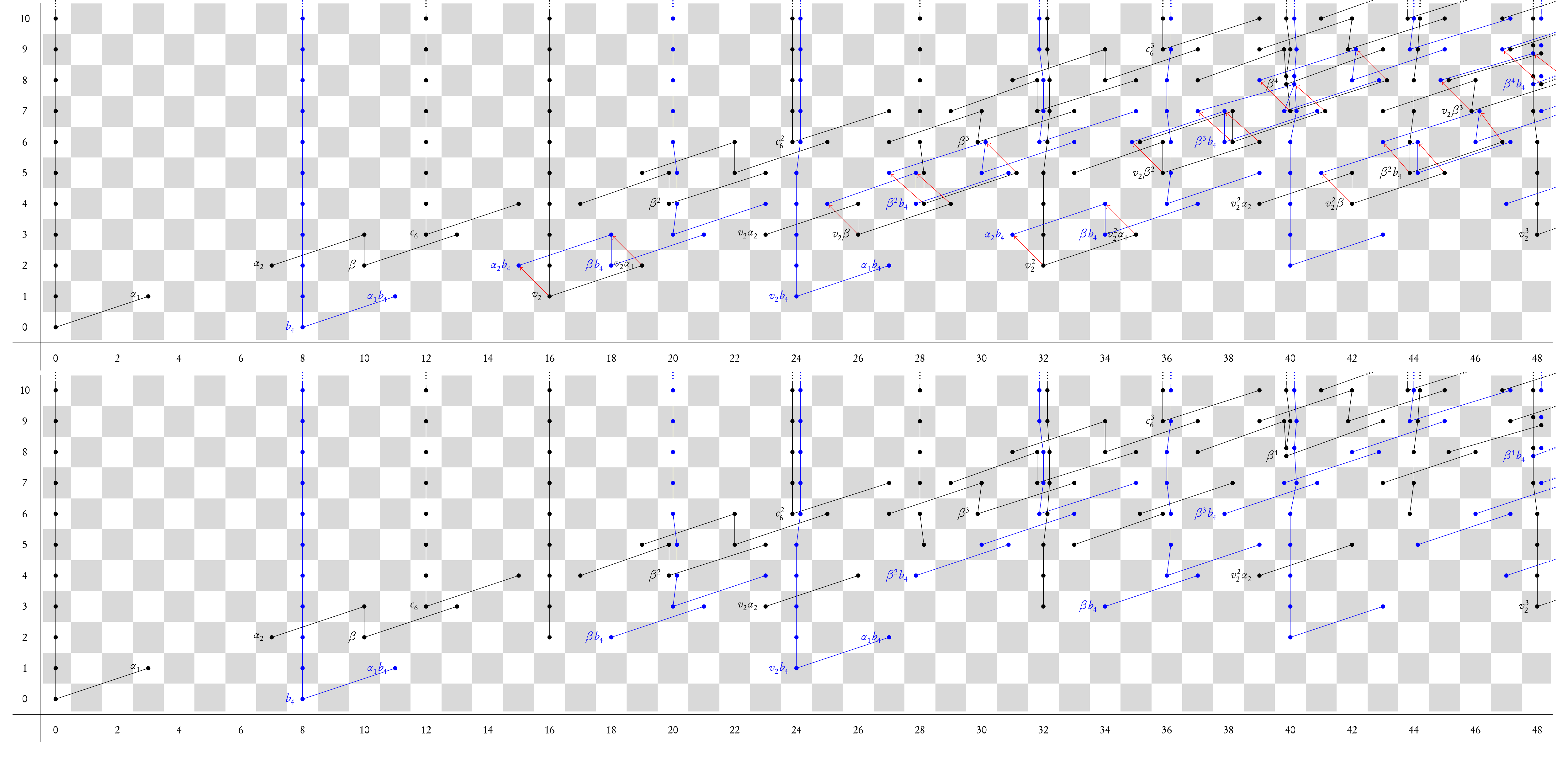}
	\caption{The $E_1$ and $E_2$-page of the algebraic spectral sequence: $
	\Ext_{\Gamma}(\F_3)\otimes E(b_4)\implies \Ext_{A_*}(H_*\tmf)
	$}\label{sseq: E1 and E2 pages of algebraic spectral sequence}
\end{figure}

We will now determine the differentials in the algebraic spectral sequence. First, we make the following simple observation. 

\begin{lem}
For degree reasons, the classes $\alpha_1, \alpha_2, b_4, \beta$, and $c_6$ are permanent cycles of the algebraic spectral sequence. 	
\end{lem}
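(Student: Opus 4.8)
The plan is to exploit the rigidity of the algebraic spectral sequence. It arises from the two-step comodule filtration $F_0 H_*\tmf = H_*\tmf \supset F_1 H_*\tmf = (b_4)$ attached to \eqref{eqn: SES for Htmf}, with associated graded $E_0 H_*\tmf \cong \B \otimes E(b_4)$, so after the change-of-rings its $E_1$-page is $\Ext_\Gamma(\F_3) \otimes E(b_4)$ and its only possibly nonzero differential is $d_1 \colon E_1^{s,t,0} \to E_1^{s+1,t,1}$. Because the filtration has only two steps, $E_2 = E_\infty$, and hence a class is a permanent cycle precisely when $d_1$ vanishes on it. In the $(t-s,s)$-indexing, $d_1$ preserves $t$, lowers the stem by one and raises $s$ by one, while raising the filtration degree $f$ from $0$ to $1$; equivalently it carries a filtration-$0$ (black) class into the coset $b_4 \cdot \Ext_\Gamma(\F_3)$ of filtration-$1$ (blue) classes. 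In particular $b_4$, which already sits in filtration $1$, is a permanent cycle: $d_1(b_4)$ would lie in filtration $2$, which is zero. It therefore remains to show that $d_1$ annihilates the four filtration-$0$ classes $\alpha_1, \alpha_2, \beta, c_6$.

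For each such $x$, the class $d_1(x)$ has the same internal degree $t$, cohomological degree $s_x + 1$, and must be of the form $b_4 \cdot y$ with $y \in \Ext_\Gamma(\F_3)$. Since $|b_4| = (0,8)$ in $(s,t)$-bidegree, the element $y$ would have bidegree $(s_x + 1, t_x - 8)$. Using $|\alpha_1| = (1,4)$, $|\alpha_2| = (2,9)$, $|\beta| = (2,12)$ from Proposition \ref{prop: Ext of A(1)}, together with $|c_6| = |v_1^3| = (3,15)$, the hypothetical target $y$ would have to lie, respectively, in the bidegrees $(2,-4)$, $(3,1)$, $(3,4)$, and $(4,7)$ of $\Ext_\Gamma(\F_3)$. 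I would then check that each of these four groups is zero.

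The cases $\alpha_1$ and $\alpha_2$ are immediate, since their targets lie in stems $t-s = -6$ and $-2$ and $\Ext_\Gamma(\F_3)$ vanishes in negative stems. For $\beta$ the target $(3,4)$ lies in stem $1$; as the algebra generators $v_0, \alpha_1, \alpha_2, \beta, c_6, v_2$ occupy stems $0, 3, 7, 10, 12, 16$ and stems add under multiplication, no monomial has stem $1$, so this group is zero. The only case not settled by stem-counting alone is $c_6$, whose target $(4,7)$ lies in the nonempty stem $3$: here a short internal-degree count shows that $v_0^3 \alpha_1$ is the unique monomial of bidegree $(4,7)$ (the generators $\alpha_2, \beta, c_6, v_2$ having $t$-degree too large to appear), and this monomial vanishes by the relation $v_0 \alpha_1 = 0$. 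Hence all four targets are trivial and $\alpha_1, \alpha_2, \beta, c_6$ are $d_1$-cycles, completing the argument. The only step demanding genuine care is this last one, where the vanishing comes not from an empty bidegree but from the relation $v_0\alpha_1 = 0$ in $\Ext_\Gamma(\F_3)$.
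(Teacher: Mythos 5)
Your proposal is correct and follows the same route as the paper, which simply asserts the lemma ``for degree reasons'' without spelling them out; your bidegree bookkeeping (target of $d_1$ on $x$ must be $b_4\cdot y$ with $y$ in bidegree $(s_x+1,t_x-8)$, plus $b_4$ being a permanent cycle because filtration $2$ vanishes) is exactly the intended argument. Your one refinement — that the $c_6$ case is not settled by an empty bidegree alone but needs the relation $v_0\alpha_1=0$ to kill $v_0^3\alpha_1$ — is a correct and worthwhile sharpening of the paper's phrase ``degree reasons.''
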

This observation and the multiplicativity of the spectral sequence eliminate many possible differentials.

From the known computation of $\pi_*\tmf$ (cf. \cite{Bauer_2008}), we see that $\pi_{15}\tmf=0$. In the $E_1$-term of the algebraic spectral sequence, there are two classes in stem 15; the class $b_4\alpha_2$ and the class $c_6\alpha_1$. Both of these classes must die, but for degree reasons the only possibility is the following differential\footnote{The class $c_6\alpha_1$ will be dealt with by an Adams differential.}
\[
d_1(v_2)\, \dot{=}\, b_4\alpha_2.
\] 
Multiplicativity of the spectral sequence and the previous lemma yields the following result.

\begin{prop}\label{prop: algcSS differentials}
	The algebraic spectral sequence has the following $d_1$-differentials 
	\begin{align*}
		d_1(v_2^iv_0^jc_6^k\beta^\ell\alpha_1^\epsilon) &\,\dot{=}\, v_2^{i-1}v_0^jc_6^k\beta^\ell\alpha_1^\epsilon b_4\alpha_2 & i\not\equiv 0 \mod 3
	\end{align*}
	for natural numbers $i, j, k, \ell$ and $\epsilon\in \{0,1\}$. There are no other differentials.
\end{prop}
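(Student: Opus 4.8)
The plan is to bootstrap from the single differential $d_1(v_2)\,\dot{=}\,b_4\alpha_2$ using the multiplicativity of the algebraic spectral sequence together with the Leibniz rule. Since $\alpha_1,\alpha_2,b_4,\beta,c_6$ and $v_0$ are all permanent cycles (the first five by the preceding lemma, and $v_0$ because there is nothing in its target bidegree), the $d_1$-differential is completely determined by its value on $v_2$ and its powers. So the first step is to record the $E_1$-page as the free module $\Ext_\Gamma(\F_3)\otimes E(b_4)$, recall from the Corollary computing the cohomology of $\Gamma$ that $\Ext_\Gamma(\F_3)\cong \F_3[v_0,c_6,v_2,\beta]\otimes E(\alpha_1,\alpha_2)/(\text{relations})$, and note that an additive basis is given by monomials $v_2^i v_0^j c_6^k \beta^\ell \alpha_1^\epsilon \alpha_2^\delta$ (with $\epsilon,\delta\in\{0,1\}$ and the relation $v_0\alpha_1=v_0\alpha_2=0$ eliminating some of these).

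Next I would compute $d_1$ on an arbitrary power $v_2^i$ via the Leibniz rule. Because $d_1(v_2)\,\dot{=}\,b_4\alpha_2$ and $b_4\alpha_2$ is itself a permanent cycle, one gets $d_1(v_2^i)\,\dot{=}\,i\,v_2^{i-1}b_4\alpha_2$ by induction, where the coefficient $i$ is read mod $3$. This is exactly where the congruence condition $i\not\equiv 0\bmod 3$ enters: when $3\mid i$ the coefficient vanishes in $\F_3$, so $v_2^i$ is a permanent cycle and there is no differential on it. Then, since every other factor $v_0^j c_6^k \beta^\ell \alpha_1^\epsilon$ is a permanent cycle, multiplicativity gives the stated formula
\[
d_1(v_2^i v_0^j c_6^k \beta^\ell \alpha_1^\epsilon)\,\dot{=}\, v_2^{i-1}v_0^j c_6^k \beta^\ell \alpha_1^\epsilon b_4\alpha_2, \qquad i\not\equiv 0 \bmod 3.
\]
Here one must be slightly careful that the target is nonzero in $E_1$ and that the relations in $\Ext_\Gamma(\F_3)$ (notably $v_0\alpha_2=0$, forcing $j=0$ whenever the target survives) do not create spurious cancellations; checking this is routine bookkeeping with the presentation.

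The remaining task is to argue there are \emph{no other} differentials, i.e.\ that the displayed formula accounts for everything. Since $d_1$ is a derivation determined by its values on algebra generators, and I have pinned down $d_1$ on each generator ($v_2$ hits $b_4\alpha_2$, all others are permanent cycles), any $d_1$ is forced by the Leibniz rule to be of the stated form; there is no freedom left. The one genuine obstacle, and the step I expect to require the most care, is justifying that $d_1(v_2)\,\dot{=}\,b_4\alpha_2$ is indeed the only possibility in stem $15$ and that the coset computation is correct — this rests on the external input $\pi_{15}\tmf=0$ together with a degree check that $b_4\alpha_2$ and $c_6\alpha_1$ are the only classes there and that the bidegree of the $d_1$-differential (raising $s$ by $1$ and filtration $f$ by $1$) permits $v_2\mapsto b_4\alpha_2$ but not $v_2\mapsto c_6\alpha_1$. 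Granting the preceding lemma and this degree analysis, the proposition follows purely formally from multiplicativity.
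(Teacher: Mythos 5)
Your proposal is correct and takes essentially the same route as the paper: the paper likewise derives $d_1(v_2)\,\dot{=}\,b_4\alpha_2$ from $\pi_{15}\tmf=0$ plus a degree check ruling out $c_6\alpha_1$ as a possible target, and then obtains the proposition from multiplicativity over the permanent cycles $\alpha_1,\alpha_2,b_4,\beta,c_6$ (with $d_1(v_0)=0$ left implicit). Your explicit Leibniz computation $d_1(v_2^i)\,\dot{=}\,i\,v_2^{i-1}b_4\alpha_2$, with the coefficient read mod $3$, is exactly the mechanism the paper compresses into the phrase ``multiplicativity of the spectral sequence,'' and your bookkeeping remark about $v_0\alpha_2=0$ killing the stated targets when $j>0$ is a correct (and welcome) refinement rather than a deviation.
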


Consequently, this spectral sequence is periodic on the element $v_2^3$. 

\begin{rmk}
	It would be nice to have an argument for this differential from first principles, but the author is not currently aware of one. He suspects this implies the existence of an interesting coproduct on $H_*\tmf$.
\end{rmk}

%
%

\subsection{Algebraic $E_\infty$-term}\label{subsec: algebraic Einfty term}
We will now describe a few patterns which make up the $E_\infty$-page of the algebraic spectral sequence. We will describe these patterns as certain modules over $\Ext_{A(1)_*}(\F_3)$ along with the monomial of the algebraic spectral sequence which generates it.

\begin{enumerate}
	\item[(Pattern 1)] Since $v_2^3$ is a permanent cycle, we have the free $\Ext_{A(1)_*}(\F_3)$ modules on the powers of $v_2^3$ and the $v_2^3$-multiples of $v_2^2b_4$, i.e. for all $j\geq 0$,
		\[
			\Ext_{A(1)_*}(\F_3)\cdot \{v_2^{3j}, v_2^{3j+2}b_4\} ;
		\]
	\item[(Pattern 2)] For $j\equiv 0,1\mod 3$, we have the patterns
	\[
	\Ext_{A(1)_*}(\F_3)/(\alpha_2)\cdot \{v_2^jb_4\}
	\]
	\item[(Pattern 3)] For $j\not\equiv 0 \mod 3$, we have the following patterns 
	\[
	\Ext_{A(1)_*}(\F_3)/(\alpha_1,\alpha_2,\beta)\cdot\{v_0v_2^j\}\oplus \Ext_{A(1)_*}(\F_3)/(\alpha_2, v_0)\cdot \{v_2^j\alpha_2\}.
	\]
\end{enumerate}
The way we obtained these patterns was by noting that, as a module over $\Ext_{A(1)_*}(\F_3)$, the $E_1$-page of the algebraic spectral sequence is freely generated by the monomials $v_2^jb_4^{\epsilon}$. In other words, we have an isomorphism of $\Ext_{A(1)_*}(\F_3)$-modules,
\[
E_1\cong \bigoplus_{j\geq 0,\, \varepsilon\in \{0,1\}}\Ext_{A(1)_*}(\F_3)\cdot\{v_2^jb_4^\varepsilon\}.
\]
The three patterns arise by partitioning the free modules $\Ext_{A(1)_*}(\F_3)\cdot \{v_2^jb_4^\epsilon\}$ into those which neither receive nor support any differentials (Pattern 1), receive differentials (Pattern 2), or support differentials (Pattern 3). 

\begin{rmk}\label{rmk: pattern terminology}
In later parts of this paper we will need to refer to these patterns. We will refer to them as \emph{patterns of type $j$ on generator $x$}. So for example, if we look at the pattern on the Adams $E_2$-term generated by the monomial $v_2^4b_4$, then we will call this a pattern of type 2 on generator $v_2^4b_4$. For patterns of the third type, we will call these patterns of type 3 on generator $v_2^j$. This is potentially confusing since $v_2^j$ does not survive the algebraic spectral sequence unless $j$ is a multiple of 3. This terminology stems from the fact that this pattern is the residual piece of a free $\Ext_{A(1)_*}(\F_3)$-module generated by $v_2^j$.
\end{rmk}

\begin{figure}[ht]
	\includegraphics[width = \textwidth, keepaspectratio]{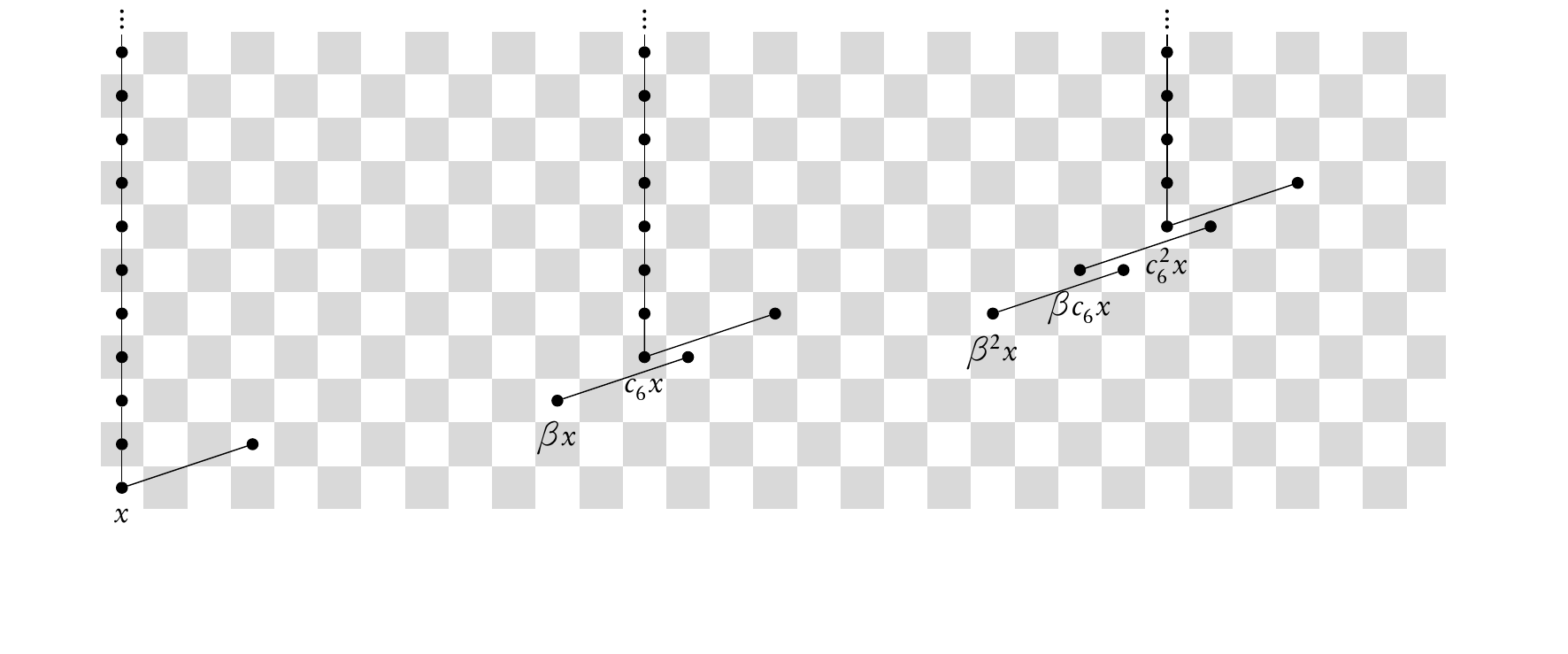}
	\caption{A depiction of pattern 2 on a generator $x$}
\end{figure}

\begin{figure}[ht]
	\includegraphics[width = \textwidth, keepaspectratio]{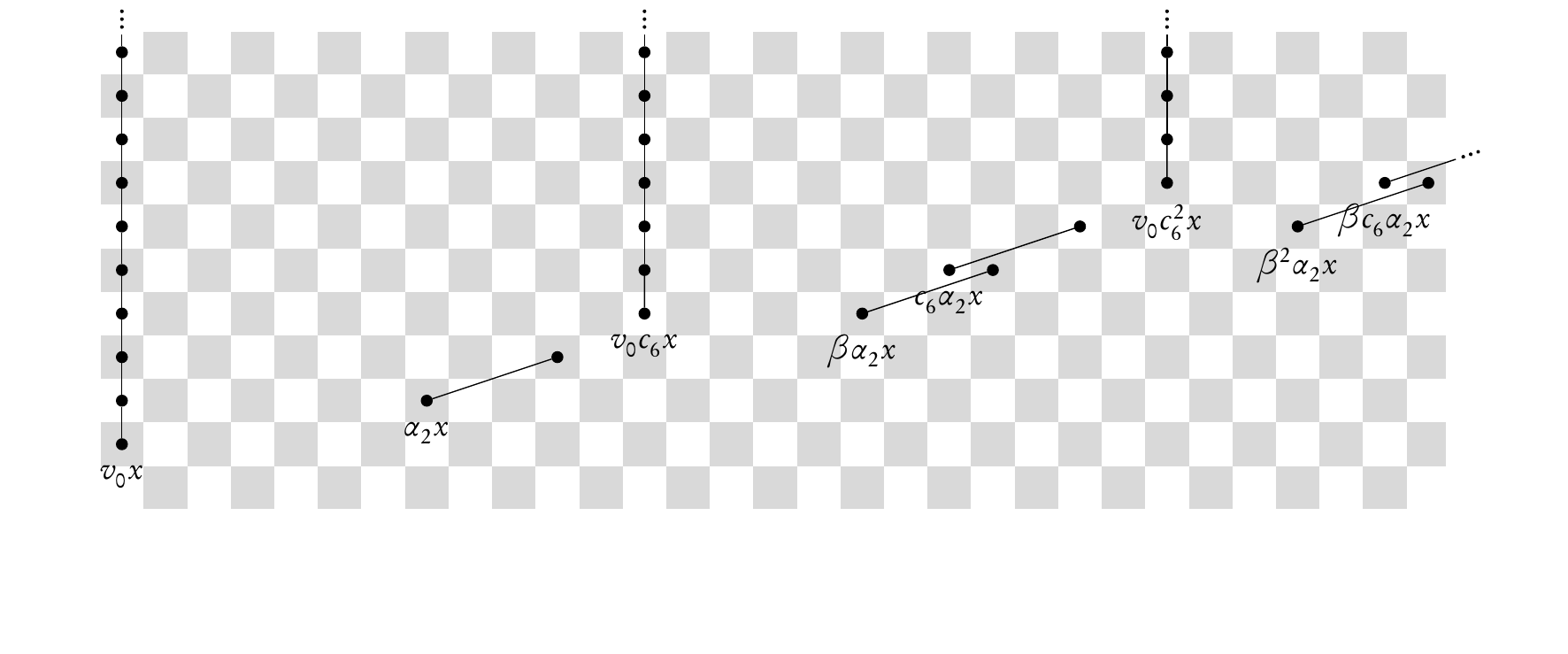}
	\caption{A depiction of pattern 3 on a generator $x$}
\end{figure}


\subsection{Algebraic hidden extensions}
As with any spectral sequence, there is the possibility of extension problems. We  will show that there is a crucial hidden $v_0$-extension which will play an important role in the next section. Namely,

\begin{prop}\label{prop: hidden extn v0(v2^2alpha2) = v2b4c6alpha1}
	In the algebraic spectral sequence, there is a hidden multiplicative extension
	\[
	v_0\cdot (v_2^2\alpha_2) \,\dot{=}\, v_2b_4c_6\alpha_1,
	\]
	consequently for every natural number $j$ and $k$, we have the hidden extension
	\[
	v_0\cdot v_2^2c_6^j\beta^k\alpha_2\,\dot{=}\, v_2c_6^{j+1}\beta^kb_4\alpha_1.
	\]
\end{prop}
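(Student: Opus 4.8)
The plan is to exploit that the algebraic spectral sequence is the long exact sequence attached to \eqref{eqn: SES for Htmf}, so that its $d_1$ is the connecting homomorphism $\delta\colon \Ext(\B)\to\Ext(\Sigma^8\B)$, which is a map of $\Ext_{A_*}(\F_3)$-modules. In this language the fundamental differential reads $d_1(v_2)\,\dot{=}\,b_4\alpha_2$; that is, $\delta(v_2)$ corresponds to $\alpha_2$ once we identify $\Ext(\Sigma^8\B)\cong\Sigma^8\Ext(\B)$, the symbol $b_4$ merely recording the $\Sigma^8$-shift and hence the jump into filtration $1$. First I would note that $v_2^2\alpha_2$ is a permanent cycle, since $d_1(v_2^2\alpha_2)\,\dot{=}\,v_2b_4\alpha_2^2=0$, so it lifts to a class $\widetilde{v_2^2\alpha_2}\in\Ext(H_*\tmf)$. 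Then $v_0\cdot\widetilde{v_2^2\alpha_2}$ maps to $v_0\,v_2^2\alpha_2=v_2^2(v_0\alpha_2)=0$ in $\Ext(\B)$, using the relation $v_0\alpha_2=0$ of Proposition \ref{prop: Ext of A(1)}. Thus $v_0\cdot\widetilde{v_2^2\alpha_2}$ lies in the image of $\Ext(\Sigma^8\B)$, i.e.\ in filtration $1$ — which is exactly the statement that the extension is hidden.

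It then remains to identify this filtration-$1$ class. A bidegree count places $v_0\cdot v_2^2\alpha_2$ in stem $t-s=39$ with $s=5$, and inspection of the $E_\infty$-chart shows that the only nonzero class there in filtration $1$ is $v_2b_4c_6\alpha_1$; hence it suffices to prove the extension is nonzero. For this I would bring in Massey products. Using the May differential $d_1(v_1)=v_0\alpha_1$ together with May's convergence theorem one obtains $\alpha_2\,\dot{=}\,\langle\alpha_1,\alpha_1,v_0\rangle$, and the relation $\alpha_1\alpha_2\,\dot{=}\,v_0\beta$ is available as well. Since $v_0v_2^2$ is also a permanent cycle ($d_1(v_0v_2^2)\,\dot{=}\,v_2b_4(v_0\alpha_2)=0$), the lift $\widetilde{v_2^2\alpha_2}$ can be represented by the bracket $\langle\alpha_1,\alpha_1,[v_0v_2^2]\rangle$ formed in $\Ext(H_*\tmf)$, and standard juggling rewrites $v_0\cdot\widetilde{v_2^2\alpha_2}$ in terms of $\alpha_1$, the class $[v_0v_2^2]$, and the differential $\delta(v_2)\,\dot{=}\,\alpha_2$. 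Evaluating the resulting (matric) Massey product — equivalently, running the nullhomotopy of $v_0\alpha_2=0$ against the cobar lift of $v_2^2\alpha_2$ — is where the factor $c_6=v_1^3$ must emerge. I expect this to be the main obstacle: controlling it cleanly seems to require either explicit cobar representatives for $v_2$, $\alpha_2$, and the witnessing cochains, or a careful matric Massey product computation validated by a May/Moss convergence argument.

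Finally, the displayed family of extensions follows by multiplicativity. The classes $c_6$ and $\beta$ are permanent cycles in filtration $0$, and multiplication by them is filtration-preserving, so from the base case one gets
\[
v_0\cdot v_2^2c_6^j\beta^k\alpha_2\,\dot{=}\,v_2c_6^{j+1}\beta^kb_4\alpha_1
\]
as soon as both sides survive to $E_\infty$. The source $v_2^2c_6^j\beta^k\alpha_2$ survives as part of the pattern of type $3$ on $v_2^2$, while the target $v_2c_6^{j+1}\beta^kb_4\alpha_1$ survives inside the pattern of type $2$ on $v_2b_4$ (using $\alpha_1\notin(\alpha_2)$), so the extension propagates exactly as claimed.
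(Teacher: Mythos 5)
Your setup is sound: reducing to the long exact sequence, checking that $v_2^2\alpha_2$ lifts, that $v_0$ times the lift dies in $\Ext(\B)$ (so lands in filtration $1$), and that the degree count leaves $v_2b_4c_6\alpha_1$ as the only possible nonzero value, is all correct and parallels what the paper needs implicitly. But the proof has a genuine gap exactly where you flag ``the main obstacle'': you never show the product is nonzero, and the route you sketch will not get you there as stated. If you represent the lift by $\langle \alpha_1,\alpha_1,[v_0v_2^2]\rangle$ and juggle, you obtain $v_0\langle \alpha_1,\alpha_1,[v_0v_2^2]\rangle = \pm\langle v_0,\alpha_1,\alpha_1\rangle\,[v_0v_2^2] = \pm\,\alpha_2\cdot[v_0v_2^2]$, and since $v_0\alpha_2=0$ in $\Ext_{A(1)_*}(\F_3)$ this product is again zero on the algebraic $E_\infty$-page. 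You have merely traded the hidden product $v_0\cdot(v_2^2\alpha_2)$ for the equally hidden product $\alpha_2\cdot[v_0v_2^2]$; the factor $c_6$ cannot ``emerge'' from this juggle, and falling back on explicit cobar or matric Massey product computations is precisely the work a proof must actually carry out.

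The paper's resolution is to choose the bracket so that, after juggling, the resulting product is \emph{not} hidden. Using May's Convergence Theorem applied to the algebraic spectral sequence (legitimate by Remark \ref{rmk: May Convergence in alg'c SS and CESS}), the defining system $d_1(0)=\alpha_2^2$, $d_1(v_2^2)\,\dot{=}\,\alpha_2\, b_4v_2$ gives
\[
\pm\, v_2^2\alpha_2 \in \langle \alpha_2,\alpha_2, b_4v_2\rangle
\]
in $\Ext_{A_*}(H_*\tmf)$ --- the key point being that the third entry is the filtration-$1$ class $b_4v_2$, so the bracket already crosses the filtration. The First Juggling Theorem then gives
\[
v_0\cdot(v_2^2\alpha_2)\,\dot{=}\, v_0\langle\alpha_2,\alpha_2,b_4v_2\rangle \,\dot{=}\, \langle v_0,\alpha_2,\alpha_2\rangle\, b_4v_2 \,\dot{=}\, (c_6\alpha_1)\,b_4v_2,
\]
where $c_6\alpha_1\in\langle v_0,\alpha_2,\alpha_2\rangle$ is Lemma \ref{lem: Massey prod} (proved in the May spectral sequence for $A(1)_*$ from $d_1(v_1^2)=\overline{v_0}\alpha_2$, so that $v_1^2\alpha_2=c_6\alpha_1$ appears in the bracket, together with Corollary \ref{cor: massey prod c6alpha1} to transport it to $\Ext_{A_*}(H_*\tmf)$). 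The product $(c_6\alpha_1)(b_4v_2)$ is an honest, non-hidden product, nonzero in the pattern $\Ext_{A(1)_*}(\F_3)/(\alpha_2)\cdot\{b_4v_2\}$, which completes the identification. Your final paragraph (propagation over $c_6^j\beta^k$) is fine once the base case is established.
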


\begin{rmk}
One might protest that this is not a hidden extension since $v_2b_4c_6\alpha_1$ is an element in the correct Adams filtration. However, from the perspective of the algebraic spectral sequence, $v_2^2\alpha_2$ has filtration 0 and $v_2b_4c_6\alpha_2$ has filtration 1. Since $v_0$ has filtration 0, this is in fact a hidden extension.
\end{rmk}


Before proving this, we will need to show the following. 

\begin{lem}\label{lem: Massey prod}
	In $\Ext_{A(1)_*}(\F_3)$, there is the Massey product 
	\[
	c_6\alpha_1\in \langle v_0, \alpha_2, \alpha_2\rangle.
	\]
	and there is zero indeterminacy.
\end{lem}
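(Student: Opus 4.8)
The plan is to establish the Massey product $c_6\alpha_1 \in \langle v_0, \alpha_2, \alpha_2\rangle$ by working in the cobar complex for $A(1)_*$ and tracking the defining null-homotopies. First I would recall from Proposition \ref{prop: Ext of A(1)} that the relevant relations are $v_0\alpha_2 = 0$ and $\alpha_2^2 = 0$ (the latter since $\alpha_2$ is exterior), so the three-fold Massey product $\langle v_0, \alpha_2, \alpha_2\rangle$ is defined. The natural tool is May's convergence theorem, which the excerpt explicitly flags as the reason for collecting the May spectral sequence details: I would compute the corresponding Massey product in the May $E_1$-term $E(\alpha_1)\otimes P(v_0,v_1,\beta)$ first, where the bracket is more tractable, and then invoke convergence to lift it to $\Ext_{A(1)_*}(\F_3)$.

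The key computational step is to produce explicit cobar representatives. I would represent $\alpha_2$ in the cobar complex and find an explicit element $u$ with $d(u) = $ (representative of $v_0\alpha_2$) and an element $w$ with $d(w) = $ (representative of $\alpha_2^2$), these null-homotopies existing precisely because those products vanish in cohomology. The Massey product is then represented by the cocycle $u\cdot[\alpha_2] \pm [v_0]\cdot w$ (with appropriate signs from the cobar differential), and the claim is that this cocycle represents $c_6\alpha_1$. Since $c_6 = v_1^3$ and $|c_6\alpha_1| = (3,13)$ in $(s,t)$-bidegree, I would check that the degrees match: $\langle v_0,\alpha_2,\alpha_2\rangle$ lands in bidegree $|v_0|+2|\alpha_2|-(1,0) = (1,1)+(4,18)-(1,0) = (4,19)$, and I would verify against the known bidegree of $c_6\alpha_1$; reconciling these degrees carefully is essential and I expect to lean on the May weight filtration to identify which monomial the bracket hits.

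For the zero indeterminacy claim, I would compute the indeterminacy as $v_0\cdot \Ext^{s',t'}_{A(1)_*}(\F_3) + \Ext^{s'',t''}_{A(1)_*}(\F_3)\cdot\alpha_2$ in the appropriate bidegree and show both summands vanish. This is a direct consultation of the chart for $\Ext_{A(1)_*}(\F_3)$ (Figure in the previous section): I would locate the bidegree one lower than $c_6\alpha_1$ in the $v_0$-direction and one lower in the $\alpha_2$-direction, and read off that these groups are zero, using the relations $v_0\alpha_1 = v_0\alpha_2 = 0$ to rule out surviving products.

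The main obstacle will be the explicit cobar bookkeeping: finding the correct null-homotopies $u$ and $w$ and tracking signs in the mod $3$ cobar differential so that the Massey product cocycle is correctly identified, together with confirming via May's convergence theorem that no higher-filtration correction alters the answer. The degree check and the indeterminacy vanishing are routine given the structure of $\Ext_{A(1)_*}(\F_3)$, but pinning down that the bracket equals $c_6\alpha_1$ rather than $0$ or another class of the same bidegree is the delicate point, and I would use the multiplicative structure together with the May spectral sequence representatives to nail it down.
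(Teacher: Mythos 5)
Your overall strategy---compute the bracket on the May $E_1$-page, lift it to $\Ext_{A(1)_*}(\F_3)$ with May's convergence theorem, and kill the indeterminacy by inspecting the chart---is exactly the paper's. But the proposal stops short of the one computation that actually produces the answer, and it locates the difficulty in the wrong place. You defer to ``explicit cobar bookkeeping'' to find the null-homotopies $u$ and $w$, calling the identification of the bracket with $c_6\alpha_1$ ``the delicate point.'' The whole reason for passing to the May spectral sequence is that no cobar work is needed: on the May $E_1$-page one has $\alpha_2 = v_1\alpha_1$, and the only generating $d_1$-differential is $d_1(v_1) = v_0\alpha_1$, so $d_1(v_1^2) = \overline{v_0}\alpha_2$ gives $u = v_1^2$; and since $\alpha_1$ is exterior, $\alpha_2^2 = v_1^2\alpha_1^2 = 0$ holds \emph{identically} on $E_1$, so one may take $w = 0$. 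The defining system then yields the representative $\overline{v_1^2}\,\alpha_2 + \overline{v_0}\cdot 0 \,\dot{=}\, v_1^2\cdot v_1\alpha_1 = c_6\alpha_1$, with no ambiguity about which class of that bidegree is hit. What genuinely requires checking---and what your phrase ``no higher-filtration correction alters the answer'' gestures at without doing---is condition (2) of May's convergence theorem (no crossing differentials); the paper verifies this concretely by noting $v_1^2\in E_1^{6,2,10}$ with $E_2^{m,2,10}=0$ for all $m$, and $\alpha_2^2\in E_1^{8,4,18}$ with $E_1^{m,3,18}=0$ for all $m$. Without these two ingredients your outline is a plan, not a proof.

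Two smaller points. Your stated bidegree $|c_6\alpha_1| = (3,13)$ is a slip: $c_6 = v_1^3$ has bidegree $(3,15)$, so $|c_6\alpha_1| = (4,19)$, which agrees on the nose with your (correct) computation of the bracket's bidegree---there is nothing to ``reconcile,'' and that worry stems only from the miscopied degree. The indeterminacy argument you sketch is fine as stated: both $v_0\cdot\Ext^{3,18}_{A(1)_*}(\F_3)$ and $\Ext^{2,10}_{A(1)_*}(\F_3)\cdot\alpha_2$ vanish, since those Ext groups are zero by inspection of the chart.
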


To prove this, we need to recall May's Convergence theorem. The proof of this fact can be found as Theorem 4.1 of \cite{MR0238929}, but we will only be interested in the case of a three-fold Massey product. The variant we will use is Theorem 2.2.2 of \cite{StableStems}. However, since we are working at an odd prime, we have to keep track of signs. In the following, if $x$ is a class in degree $n$, then we write $\overline{x}$ for $(-1)^{n+1}x$ (see \cite[Appendix 4]{greenbook} for further details).

\begin{thm}[May's Convergence Theorem]
	Let $\alpha_0, \alpha_1, \alpha_2$ be elements of $\Ext$ such that the Massey product $\langle \alpha_0, \alpha_1, \alpha_2\rangle$ is defined. For each $i$, let $a_i$ be a permanent cycle on the May $E_r$-page which detects $\alpha_i$. Suppose further that 
	\begin{enumerate}
		\item The Massey product $\langle a_0,a_1,a_2\rangle$ is defined on the $E_{r+1}$-page: there are $a_{01}$ and $a_{12}$ such that $d_r(a_{01}) = \overline{a_0}a_1$ and $d_r(a_{12}) = \overline{a_1}a_2$.
		\item If $(m,s,t)$ is the tri-degree of either $a_{01}$ or $a_{12}$, and for for any $m'\geq m$ and $q$ such that $m'-q<m-r$, the differential
			\[
			d_q: E_q^{m', s, t}\to E_q^{m'-q+1, s+1,t}
			\]
			is zero. 
	\end{enumerate}
	Then the element $\overline{a_{01}}a_3+\overline{a_0}a_{12}$ is a permanent cycle and detects an element of $\langle \alpha_0, \alpha_1, \alpha_2\rangle$.
\end{thm}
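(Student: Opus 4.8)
The plan is to prove the statement at the level of the filtered cobar complex out of which the May spectral sequence is built, since both the Massey product $\langle\alpha_0,\alpha_1,\alpha_2\rangle$ and the spectral sequence are defined there. Write $(\Omega^\bullet,d)$ for the reduced cobar complex of $A(1)_*$, equipped with the May weight filtration, so that $\Ext_{A(1)_*}(\F_3)=H^\bullet(\Omega^\bullet)$ and the associated graded computes the May $E_1$-page. The differential $d$ is filtered, and a class of $E_r^{m,s,t}$ is represented by a cochain of May weight $m$ whose differential lies in strictly deeper filtration; the $d_r$-differential records the leading (associated graded) part of $d$. First I would fix cocycles $z_0,z_1,z_2\in\Omega^\bullet$ representing $\alpha_0,\alpha_1,\alpha_2$ whose associated graded leading terms are $a_0,a_1,a_2$. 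These exist precisely because each $a_i$ is a permanent cycle detecting $\alpha_i$.

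The heart of the argument is to promote the $E_r$-page defining system $(a_{01},a_{12})$ to an honest cochain-level defining system. Because the Massey product is defined, $\overline{z_0}z_1$ and $\overline{z_1}z_2$ are coboundaries in $\Omega^\bullet$, so there exist cochains $u,w$ with $d(u)=\overline{z_0}z_1$ and $d(w)=\overline{z_1}z_2$; the cocycle $\mu:=\overline{u}z_2+\overline{z_0}w$ then represents an element of $\langle\alpha_0,\alpha_1,\alpha_2\rangle$ by the standard cochain formula. What must be shown is that $u,w$ can be chosen with leading terms $a_{01},a_{12}$, so that the leading term of $\mu$ is the asserted element $\overline{a_{01}}a_2+\overline{a_0}a_{12}$. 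To arrange this I would lift $a_{01},a_{12}$ to cochains $y_{01},y_{12}$ of the weights recorded in hypothesis (1); that hypothesis says $d(y_{01})=\overline{z_0}z_1+\epsilon_{01}$ and $d(y_{12})=\overline{z_1}z_2+\epsilon_{12}$, where each error term lies in strictly deeper May filtration and is automatically a cocycle. One then corrects $y_{01}$ by a deeper-filtration cochain annihilating $\epsilon_{01}$ (and likewise $y_{12}$), producing honest $u,w$ with unchanged leading terms.

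The main obstacle, and the exact reason hypothesis (2) is present, is guaranteeing these corrections exist without disturbing the leading terms: one must solve $d(c)=\epsilon_{01}$ with $c$ of filtration deep enough to be invisible to $a_{01}$ but not so deep that $\epsilon_{01}$ is forced to represent a nonzero class on some intermediate page. The vanishing of the crossing differentials $d_q\colon E_q^{m',s,t}\to E_q^{m'-q+1,s+1,t}$ in the stated range is precisely the condition that no such obstruction arises, so that $\epsilon_{01}$ and $\epsilon_{12}$ are coboundaries within the permitted filtration window; this is the usual \emph{no crossing differentials} phenomenon. Once the corrections are in place, a final weight count shows that $\overline{u}z_2$ and $\overline{z_0}w$ have leading terms $\overline{a_{01}}a_2$ and $\overline{a_0}a_{12}$ with no filtration drop or cancellation, so $\mu$ is a cocycle whose leading term is exactly the asserted element, which is therefore a permanent cycle detecting the Massey product.

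The remaining care is bookkeeping of signs. At the odd prime the Koszul rule forces the bars $\overline{(\,\cdot\,)}=(-1)^{n+1}(\cdot)$ throughout, and I would verify that the cochain-level signs appearing in $\mu$ match those in $\overline{a_{01}}a_2+\overline{a_0}a_{12}$ using the conventions of \cite[Appendix 4]{greenbook}. This amounts to the odd-primary adaptation of the proofs of \cite[Theorem 4.1]{MR0238929} and \cite[Theorem 2.2.2]{StableStems}, the latter supplying the three-fold case in the form used here.
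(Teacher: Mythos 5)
A point of orientation first: the paper contains no proof of this theorem. It is quoted as a known result, with the general case deferred to Theorem 4.1 of \cite{MR0238929} and the three-fold variant actually used deferred to Theorem 2.2.2 of \cite{StableStems}. So the only meaningful comparison is with those cited sources, and your sketch does reconstruct their argument in outline: realize the May spectral sequence from the weight filtration on the cobar complex of $A(1)_*$, lift the $E_r$-page defining system $(a_{01},a_{12})$ to cochains, absorb the deeper-filtration error terms $\epsilon_{01},\epsilon_{12}$ without disturbing leading terms, and observe that the resulting cocycle $\mu=\overline{u}z_2+\overline{z_0}w$ represents an element of $\langle\alpha_0,\alpha_1,\alpha_2\rangle$ whose leading term is the asserted class. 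You also silently correct the misprint in the statement (the conclusion should read $\overline{a_{01}}a_2+\overline{a_0}a_{12}$, not $\overline{a_{01}}a_3+\overline{a_0}a_{12}$), which is right.

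Two caveats, both located at the step you yourself identify as the heart of the matter. First, the assertion that hypothesis (2) makes $\epsilon_{01}$ a coboundary of a cochain of filtration strictly deeper than that of $a_{01}$ is not a one-step observation: it is the entire technical content of May's theorem. In \cite{MR0238929} this is an induction over pages $q\geq r$ --- the leading class of the current error must die in the spectral sequence, the no-crossing hypothesis is exactly what guarantees it dies in a way that lets the correcting cochain be chosen without altering the leading term $a_{01}$, and then the residual error is pushed into deeper filtration and the argument repeats; one also needs the successive corrections to terminate, which here is automatic in each tridegree since $A(1)_*$ is finite, but should be said. Asserting the conclusion of that induction is acceptable in a sketch, but a complete proof must carry it out rather than cite ``no such obstruction arises.'' Second, your closing claim that a weight count shows ``no filtration drop or cancellation'' overstates what the count gives: it shows $\mu$ has filtration at least that of $\overline{a_{01}}a_2+\overline{a_0}a_{12}$, and the detection statement is then the standard one --- if that class survives nonzero to $E_\infty$ it detects the class of $\mu$, while if it dies the bracket contains an element of higher filtration. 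Cancellation genuinely can occur in general, and the theorem (as in the cited sources) is to be read with that convention; your write-up should say so instead of claiming it cannot happen. With those two repairs your plan is a faithful odd-primary rendition of the cited proofs, signs handled as in \cite[Appendix 4]{greenbook}.
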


\begin{rmk}
	The second condition in May's Convergence Theorem is often expressed by saying there are no ``crossing differentials''.
\end{rmk}

We will use May's convergence theorem to give a proof for Lemma \ref{lem: Massey prod}. From the discussion of the May spectral sequence right before Proposition \ref{prop: Ext of A(1)},  $\alpha_2:= v_1\alpha_1$ is a non-zero permanent cycle. One easily shows that 
	
	\begin{lem}
		In $\Ext_{A(1)_*}(\F_3)$, there is the Massey product 
		\[
		\alpha_2 = \langle v_0, \alpha_1, \alpha_1\rangle.
		\]
	\end{lem}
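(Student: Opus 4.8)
The plan is to prove the Massey product $\alpha_2 = \langle v_0, \alpha_1, \alpha_1 \rangle$ by invoking May's Convergence Theorem, using the cobar/May-spectral-sequence representatives that were set up in the discussion just before Proposition \ref{prop: Ext of A(1)}. Recall that in the May spectral sequence for $A(1)_*$, the class $v_0$ is detected by $[\otau_0]$, the class $\alpha_1$ is detected by $[\zeta_1]$, and the key $d_1$-differential is $d_1(v_1) = v_0 \alpha_1$, where $v_1$ is detected by $[\otau_1]$. Since $\alpha_2 := v_1 \alpha_1$, the natural guess is that the bracket $\langle v_0, \alpha_1, \alpha_1 \rangle$ is detected on the May $E_1$-page by the Massey product $\langle [\otau_0], [\zeta_1], [\zeta_1] \rangle$, and that this equals $v_1 \alpha_1 = \alpha_2$.

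First I would verify at the level of the May $E_1$-term that the bracket $\langle [\otau_0], [\zeta_1], [\zeta_1]\rangle$ is defined and compute a representative. The products $v_0 \alpha_1$ and $\alpha_1^2$ must both vanish on the relevant page so that the bracket makes sense: the product $\alpha_1^2 = 0$ because $\alpha_1$ is exterior (it is the $E(\alpha_1)$ factor of the $E_1$-term), and I would exhibit the nullhomotopies $a_{01}$ and $a_{12}$ explicitly in the cobar complex. Concretely, the relation $d_1(v_1) = v_0\alpha_1$ (coming from the coproduct $\psi(\otau_1) = \otau_1 \otimes 1 + \otau_0\otimes\zeta_1 + 1\otimes\otau_1$) supplies one nullhomotopy, and the other nullhomotopy of $\alpha_1 \cdot \alpha_1$ comes from the cobar-level relation that witnesses $\beta = \langle \alpha_1, \alpha_1, \alpha_1\rangle$, namely the element $[\zeta_1^2\mid\zeta_1] - [\zeta_1\mid\zeta_1^2]$ already recorded in the excerpt. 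I would assemble $\overline{a_{01}}\,\alpha_1 + \overline{v_0}\,a_{12}$ and check that it represents (a unit multiple of) $v_1\alpha_1$, keeping careful track of the odd-primary sign convention $\overline{x} = (-1)^{n+1}x$.

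Second, I would check the two hypotheses of May's Convergence Theorem. Hypothesis (1) — that the bracket of representatives is defined on the next page — follows from the nullhomotopies just produced. Hypothesis (2), the absence of crossing differentials, is the step I expect to be the main obstacle: I must confirm that in the relevant tri-degrees (those of $a_{01}$ and $a_{12}$) there are no May differentials $d_q$ of shorter-than-expected length that could interfere. Because $A(1)_*$ is very small and its cohomology is completely recorded in Proposition \ref{prop: Ext of A(1)}, this is a finite, low-degree check: I would tabulate the tri-degrees in question and simply observe from the known $E_1$-term and $d_1$-differential that no offending $d_q$ exists, so no crossing differentials occur.

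Finally, I would conclude that $\overline{a_{01}}\,\alpha_1 + \overline{v_0}\,a_{12}$ is a permanent cycle detecting an element of $\langle v_0, \alpha_1, \alpha_1\rangle$, and since by the computation above it detects $v_1\alpha_1 = \alpha_2$ with no room in the relevant bidegree for lower-filtration corrections, the bracket equals $\alpha_2$. A short remark on indeterminacy finishes the argument: the indeterminacy is $v_0 \cdot \Ext^{1,t}_{A(1)_*}(\F_3) + \Ext^{1,t'}_{A(1)_*}(\F_3)\cdot\alpha_1$, and inspecting Proposition \ref{prop: Ext of A(1)} in the bidegree of $\alpha_2 = (2,9)$ shows these contributions vanish (in particular $v_0\alpha_1 = 0$ and $v_0\alpha_2 = 0$ in that Ext ring), so the bracket contains $\alpha_2$ as its only element.
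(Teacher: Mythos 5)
Your overall strategy---May's Convergence Theorem applied to the May spectral sequence for $A(1)_*$, with the defining system coming from $d_1(v_1)\,\dot{=}\,\overline{v_0}\alpha_1$, followed by a crossing-differential check and an indeterminacy check---is exactly the route the paper intends: it leaves this lemma as ``one easily shows,'' but proves the analogous bracket $c_6\alpha_1\in \langle v_0,\alpha_2,\alpha_2\rangle$ (Lemma \ref{lem: Massey prod}) by precisely this method, using the defining system $d_1(v_1^2)=\overline{v_0}\alpha_2$, $d_1(0)=\alpha_2^2$.

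However, one step of your plan is wrong as written: the element $[\zeta_1^2\mid\zeta_1]-[\zeta_1\mid\zeta_1^2]$ is \emph{not} a nullhomotopy of $\alpha_1\cdot\alpha_1$. It is the cobar representative of $\beta$ itself, i.e.\ the \emph{output} of the Massey product $\langle\alpha_1,\alpha_1,\alpha_1\rangle$, and it lives in cohomological degree $2$, whereas a nullhomotopy of the degree-$2$ class $\alpha_1^2$ must be a degree-$1$ cochain. Plugging it in as $a_{12}$ would make the expression $\overline{a_{01}}a_2+\overline{a_0}a_{12}$ inhomogeneous, so this step would fail. The correct cobar-level witness is $[\zeta_1^2]$, whose differential is a unit multiple of $[\zeta_1\mid\zeta_1]$; better still, at the level of the May $E_1$-page---which is all that May's Convergence Theorem requires---you may simply take $a_{12}=0$, since $\alpha_1^2=0$ identically in the exterior factor $E(\alpha_1)$, a fact you yourself note earlier. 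With $a_{01}\,\dot{=}\,v_1$ and $a_{12}=0$, the representative is $\overline{v_1}\alpha_1\,\dot{=}\,v_1\alpha_1=\alpha_2$ on the nose, and the remainder of your argument (no crossing differentials in the relevant tri-degrees, and zero indeterminacy because $\Ext^{1,8}_{A(1)_*}(\F_3)=\Ext^{1,5}_{A(1)_*}(\F_3)=0$ by Proposition \ref{prop: Ext of A(1)}) goes through unchanged.
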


\begin{proof}[Proof of Lemma \ref{lem: Massey prod}]
	Since $\alpha_2 = v_1\alpha_1$ is exterior on the May $E_1$-page, we get the following defining system $d_1(v_1^2) = \overline{v_0}\alpha_2$ and $d_1(0) = \alpha_2^2$ for the Massey product $\langle v_0, \alpha_2, \alpha_2\rangle$. This shows that on the May $E_1$-page we have $v_1^2\alpha_2 = c_6\alpha_1$ is in $\langle v_0, \alpha_2, \alpha_2\rangle$. 
	
	Since $\alpha_2^2=0$ and $v_0\alpha_2=0$ in $\Ext_{A(1)_*}(\F_3)$, the Massey product $\langle v_0, \alpha_2, \alpha_2\rangle$ is defined in $\Ext_{A(1)_*}(\F_3)$. Furthermore, there is no indeterminacy of this Massey product. So we just need to check the second condition of May's Convergence Theorem, that there are no crossing differentials. Note that $v_1^2\in E_1^{6, 2, 10}$ and that $E_2^{m, 2, 10}=0$ for all $m$. So condition (2) is satisfied here. Likewise, note that $(v_1\alpha_1)^2\in E_1^{8, 4,18}$ and that $E_1^{m, 3, 18}=0$ for all $m$. Thus there are no nonzero differentials to worry about. So condition (2) is always satisfied here as well. Thus we may apply May's Convergence Theorem to infer that 
	\[
	c_6\alpha_1\in \langle v_0, \alpha_2, \alpha_2\rangle. 
	\]
	It is easy to see that there is no indeterminacy.
\end{proof}

\begin{rmk}\label{rmk: May Convergence in alg'c SS and CESS}
Keep in mind that May's convergence theorem is actually very general (cf. the discussion preceeding \cite[A1.4.10]{greenbook}). It applies to any spectral sequence which arises from a multiplicative filtration on a DGA. In particular, it applies to the Cartan-Eilenberg SS and the algebraic SS we have used. Since the Cartan-Eilenberg SS collapses, and since the algebraic SS only has $d_1$-differentials, the May Convergence Theorem vacuously applies to these spectral sequences.
\end{rmk} 

Thus, we derive the following corollary. 

\begin{cor}\label{cor: massey prod c6alpha1}
	In $\Ext_{A_*}(H_*\tmf)$ there is the Massey 
	\[
	c_6\alpha_1 \,\dot{=}\, \langle v_0, \alpha_2, \alpha_2\rangle.
	\]
\end{cor}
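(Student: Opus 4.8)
The plan is to lift the bracket of Lemma \ref{lem: Massey prod} from $\Ext_{A(1)_*}(\F_3)$ up to $\Ext_{A_*}(H_*\tmf)$ through two successive ring maps, using only naturality of Massey products; this is the concrete content of the ``degenerate'' form of May's Convergence Theorem recorded in Remark \ref{rmk: May Convergence in alg'c SS and CESS}. The two maps are the one induced by the Hopf subalgebra inclusion $A(1)_*\hookrightarrow\Gamma$, which lands us on the filtration-$0$ part of the algebraic spectral sequence, and the edge homomorphism of the algebraic spectral sequence, which connects $\Ext_{A_*}(H_*\tmf)$ to that filtration-$0$ part.

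First I would transport the bracket into $\Ext_\Gamma(\F_3)$. Since Cotor is covariant in the coalgebra, the inclusion $A(1)_*\hookrightarrow\Gamma$ induces a ring homomorphism $\Ext_{A(1)_*}(\F_3)\to\Ext_\Gamma(\F_3)$; under the identification $\Ext_\Gamma(\F_3)\cong\Ext_{A(1)_*}(\F_3)\otimes\F_3[v_2]$ coming from the collapse of the Cartan--Eilenberg spectral sequence \eqref{eq: CESS}, this map is the inclusion of the $v_2$-degree-$0$ summand and carries $v_0,\alpha_2,c_6,\alpha_1$ to the classes of the same names. Naturality of Massey products then upgrades Lemma \ref{lem: Massey prod} to the statement $c_6\alpha_1\in\langle v_0,\alpha_2,\alpha_2\rangle$ in $\Ext_\Gamma(\F_3)$. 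A short degree count shows the indeterminacy again vanishes: the bracket sits in bidegree $(s,t)=(4,19)$, and the groups $v_0\cdot\Ext_\Gamma^{3,18}(\F_3)$ and $\Ext_\Gamma^{2,10}(\F_3)\cdot\alpha_2$ that would contribute to it are both zero.

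Second I would descend to $\Ext_{A_*}(H_*\tmf)$. The surjection of comodules $H_*\tmf\twoheadrightarrow\B$ of \eqref{eqn: SES for Htmf} is exactly the quotient by the ideal $(b_4)=F_1H_*\tmf$, so the induced ring map $q^*\colon\Ext_{A_*}(H_*\tmf)\to\Ext_{A_*}(\B)=\Ext_\Gamma(\F_3)$ is the filtration-$0$ edge homomorphism of the algebraic spectral sequence and sends the elements detected by the filtration-$0$ classes $v_0,\alpha_2,c_6\alpha_1$ to the same-named elements. Before invoking naturality I must check that the bracket is actually defined upstairs, i.e.\ that $v_0\alpha_2=0$ and $\alpha_2^2=0$ in $\Ext_{A_*}(H_*\tmf)$ and not merely on $E_1$; this is a filtration bookkeeping, since a hidden extension off either product would have to land in a group $b_4\cdot\Ext_\Gamma^{3,2}(\F_3)$, respectively $b_4\cdot\Ext_\Gamma^{4,10}(\F_3)$, and both of these vanish for degree reasons. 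Granting this, naturality gives $q^*\langle v_0,\alpha_2,\alpha_2\rangle\subseteq\langle v_0,\alpha_2,\alpha_2\rangle_{\Ext_\Gamma(\F_3)}=\{c_6\alpha_1\}$; since $q^*$ is the filtration-$0$ projection, every representative of the upstairs bracket is a filtration-$0$ lift of $c_6\alpha_1$, which is precisely the assertion $c_6\alpha_1\,\dot{=}\,\langle v_0,\alpha_2,\alpha_2\rangle$.

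The main obstacle I expect is not the lifting but confirming that the bracket is defined in $\Ext_{A_*}(H_*\tmf)$. Because Proposition \ref{prop: hidden extn v0(v2^2alpha2) = v2b4c6alpha1} exhibits a genuinely nonzero hidden $v_0$-extension on the nearby class $v_2^2\alpha_2$, one cannot simply assert that $v_0$-multiplications remain trivial; the content is the two emptiness checks above, which guarantee that $v_0\alpha_2$ and $\alpha_2^2$ have no room for a hidden extension. Once these are in hand the argument is purely formal, and the vanishing of the indeterminacy of the bracket in $\Ext_{A_*}(H_*\tmf)$ follows from the same kind of degree count as in the $\Ext_\Gamma(\F_3)$ computation.
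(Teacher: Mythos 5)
Your proof is correct, and its skeleton -- lift the bracket of Lemma \ref{lem: Massey prod} from $\Ext_{A(1)_*}(\F_3)$ through $\Ext_\Gamma(\F_3)$ up to $\Ext_{A_*}(H_*\tmf)$ -- is the same as the paper's; what differs is the lifting mechanism. The paper gets the corollary by citing Remark \ref{rmk: May Convergence in alg'c SS and CESS}: May's Convergence Theorem applies to the Cartan--Eilenberg and algebraic spectral sequences, and since the former collapses and the latter has only $d_1$-differentials, the theorem applies vacuously and transports the bracket. You instead avoid the convergence theorem entirely and use naturality of Massey products along two ring maps: the map $\Ext_{A(1)_*}(\F_3)\to\Ext_\Gamma(\F_3)$ induced by the Hopf-algebra inclusion (which the paper itself uses in its no-hidden-extensions argument), and the edge map $q^*\colon\Ext_{A_*}(H_*\tmf)\to\Ext_\Gamma(\F_3)$ induced by the quotient $H_*\tmf\to\B$. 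Since this second map points out of the ring where you want the bracket, naturality only helps after you know the bracket is defined upstairs, and your two vanishing checks ($\Ext^{3,2}_\Gamma=0$ and $\Ext^{4,10}_\Gamma=0$, ruling out hidden extensions on $v_0\alpha_2$ and $\alpha_2^2$) are exactly the hypothesis the paper leaves implicit here but verifies explicitly one proposition later for $\langle\alpha_2,\alpha_2,b_4v_2\rangle$; all your degree counts are right. Two remarks on the trade-off. First, to pass from ``every element of the upstairs bracket maps to $c_6\alpha_1$ under $q^*$'' to the stated equality, you also want $q^*$ injective in bidegree $(4,19)$, i.e.\ $\Ext^{4,11}_\Gamma=0$; this holds (the only candidate monomial is $v_0^2\alpha_2$, killed by the relation $v_0\alpha_2=0$), and is indeed the ``same kind of degree count'' you allude to, but it should be recorded since it is what pins the bracket down rather than merely showing it contains a lift. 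Second, your method buys elementarity at the cost of generality: it works here precisely because $v_0$, $\alpha_2$, and $c_6\alpha_1$ are all detected in algebraic filtration $0$ where the edge map sees them; it cannot be reused for the bracket $\langle\alpha_2,\alpha_2,b_4v_2\rangle$ in Proposition \ref{prop: hidden extn v0(v2^2alpha2) = v2b4c6alpha1}, whose entry $b_4v_2$ lives in filtration $1$ and dies under $q^*$ -- there the paper's May-convergence machinery is genuinely needed, which explains why the paper sets up the uniform argument rather than the naturality shortcut.
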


We will use this corollary to derive the hidden multiplicative extension. 

\begin{proof}[Proof of Proposition \ref{prop: hidden extn v0(v2^2alpha2) = v2b4c6alpha1}]
	One can check, using the May Convergence Theorem applied to the algebraic spectral sequence, that one has the Massey product 
	\[
	\pm v_2^2\alpha_2 \in  \langle \alpha_2, \alpha_2, b_4v_2\rangle,
	\]
	and that this Massey product has no indeterminacy. Note that we do not know the sign since we only know the differential $d_1(v_2)$ up to sign. 
	
	In order to apply the May Convergence Theorem to this Massey product, we must check that it is defined on $\Ext_{A_*}(H_*\tmf)$. Note that $\alpha_2^2=0$ in this Ext group, since there is no nonzero group in the 14 stem. Furthermore, since hidden extensions must always target a class in higher filtration, and since the algebraic spectral sequence only has elements in filtration 0 and 1, it follows that there can be no hidden extension for the product of $b_4v_2$ and $\alpha_2$. Thus the Massey product is defined in $\Ext_{A_*}(H_*\tmf)$ (cf. Remark \ref{rmk: May Convergence in alg'c SS and CESS}). Using the First Juggling Theorem (cf. \cite[A1.4.6]{greenbook}), we have 
	\[
	v_0\cdot (v_2^2\alpha_2) \, \dot{=}\, v_0\langle \alpha_2, \alpha_2, b_4v_2\rangle \,\dot{=}\, \langle v_0, \alpha_2, \alpha_2\rangle b_4v_2 \,\dot{=}\, c_6\alpha_1b_4v_2
	\]
	yielding the desired extension.
\end{proof}

We will also have occasion to use the following hidden extension. 

\begin{cor}\label{cor: hidden extn v0 v_1alpha2 = b4c6alpha1}
	In the algebraic SS, there is the Massey product
	\[
	v_2\alpha_2 \,\dot{=}\, \langle \alpha_2, \alpha_2, b_4\rangle
	\]
	and consequently the hidden extension
	\[
	v_0\cdot(v_2\alpha_2)\, \dot{=}\, b_4\cdot(c_6\alpha_1).
	\]
\end{cor}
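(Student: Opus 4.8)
The plan is to mirror the proof of Proposition \ref{prop: hidden extn v0(v2^2alpha2) = v2b4c6alpha1} almost verbatim, replacing $b_4v_2$ by $b_4$ throughout; the present corollary is really the slightly simpler companion of that proposition, and the same two tools---May's Convergence Theorem applied to the algebraic spectral sequence (cf. Remark \ref{rmk: May Convergence in alg'c SS and CESS}) and the First Juggling Theorem---do all the work.

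First I would establish the Massey product $v_2\alpha_2\,\dot{=}\,\langle \alpha_2,\alpha_2,b_4\rangle$. The crucial input is the differential $d_1(v_2)\,\dot{=}\,b_4\alpha_2$ from Proposition \ref{prop: algcSS differentials}, which supplies the defining system directly: reading the differential as $d_1(\pm v_2)=\overline{\alpha_2}\,b_4$ gives $a_{12}=\pm v_2$, while we may take $a_{01}=0$ since $\alpha_2^2=0$ already on the algebraic $E_1$-page. The resulting representative $\overline{a_{01}}\,b_4+\overline{\alpha_2}\,a_{12}=\pm v_2\alpha_2$ then detects an element of the bracket. Because the algebraic spectral sequence carries only $d_1$-differentials, condition (2) of May's Convergence Theorem (no crossing differentials) is satisfied vacuously, exactly as noted in Remark \ref{rmk: May Convergence in alg'c SS and CESS}. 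The sign is immaterial because we only assert equality up to a multiplicative unit.

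Next I must confirm that $\langle \alpha_2,\alpha_2,b_4\rangle$ is honestly defined in $\Ext_{A_*}(H_*\tmf)$ and has no indeterminacy. For definedness, $\alpha_2^2=0$ because $\Ext_{A_*}(H_*\tmf)$ vanishes in the $14$-stem, and $b_4\alpha_2=0$ because $b_4\alpha_2=d_1(\pm v_2)$ is a boundary in the algebraic spectral sequence; since $b_4\alpha_2$ sits in filtration $1$, the top filtration, there is no room for a hidden extension to revive it, so it genuinely vanishes in $\Ext_{A_*}(H_*\tmf)$. For the indeterminacy, the relevant groups are $\alpha_2\cdot\Ext_{A_*}(H_*\tmf)$ and $b_4\cdot\Ext_{A_*}(H_*\tmf)$ in the bidegree of $v_2\alpha_2$, namely $(s,t-s)=(3,23)$, and I would rule these out by the same sort of degree count used in Lemma \ref{lem: Massey prod}.

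Finally I would shuffle. Invoking the First Juggling Theorem together with Corollary \ref{cor: massey prod c6alpha1}, which provides $c_6\alpha_1\,\dot{=}\,\langle v_0,\alpha_2,\alpha_2\rangle$, I obtain
\[
v_0\cdot(v_2\alpha_2)\,\dot{=}\,v_0\langle \alpha_2,\alpha_2,b_4\rangle\,\dot{=}\,\langle v_0,\alpha_2,\alpha_2\rangle\, b_4\,\dot{=}\,(c_6\alpha_1)\,b_4,
\]
which is exactly the claimed hidden extension. As in the proposition, the only genuinely delicate point is verifying that $b_4\alpha_2$ supports no hidden extension, so that the bracket is defined in $\Ext_{A_*}(H_*\tmf)$; but since $b_4\alpha_2$ already lies in the top algebraic filtration this is immediate, and I therefore expect no essential obstacle beyond the bookkeeping.
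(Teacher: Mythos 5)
Your proposal is correct and is essentially the paper's own proof: the paper likewise extracts the defining system $d_1(0)=\alpha_2^2$, $d_1(v_2)=\alpha_2 b_4$ from the algebraic differential, applies May's Convergence Theorem to get $v_2\alpha_2 \,\dot{=}\, \langle \alpha_2,\alpha_2,b_4\rangle$ with no indeterminacy, and then concludes via the First Juggling Theorem and Corollary \ref{cor: massey prod c6alpha1} exactly as you do. The extra care you take in verifying that $b_4\alpha_2$ vanishes honestly in $\Ext_{A_*}(H_*\tmf)$ (via the top-filtration argument) is a detail the paper's proof of this corollary leaves implicit, having spelled it out only in the proof of Proposition \ref{prop: hidden extn v0(v2^2alpha2) = v2b4c6alpha1}.
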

\begin{proof}
	A defining system for the Massey product $\langle \alpha_2, \alpha_2, b_4\rangle$ on the $E_2$ page is given by $d_1(0) = \alpha_2^2$ and $d_1(v_2) = \alpha_2b_4$. Observe that there is no indeterminacy. So by May's convergence theorem we have the Massey product
	\[
	v_2\alpha_2 \,\dot{=}\, \langle \alpha_2, \alpha_2,b_4\rangle.
	\]
	Since this Massey product and $\langle v_0, \alpha_2, \alpha_2\rangle$ are both strictly defined, we get from the First Juggling Theorem \cite[A1.4.6(c)]{greenbook} the following equalities
	\[
	b_4c_6\alpha_1 \,\dot{=}\, \langle v_0, \alpha_2, \alpha_2\rangle b_4 \,\dot{=}\, v_0\langle \alpha_2, \alpha_2, b_4\rangle \,\dot{=}\, v_0(v_2\alpha_2).
	\]
\end{proof}

\subsection{Comparison to the Adams spectral sequence in $\tmf$-modules}\label{subsection: comparison}

We now make a few remarks comparing the $E_2$-term of the Adams spectral sequence for $\tmf$ and the Adams spectral sequence for $\tmf$ in $\tmf$-modules as studied by Hill (\cite[Section 2]{Hill_2007}). The latter is a spectral sequence
\[
E_2^{s,t}=\Ext^{s,t}_{A^{\tmf}_*}(\F_3)\implies \pi_{t-s}\tmf.
\]
where 
\[
A_*^{\tmf} = \pi_*\left(H\wedge_{\tmf} H\right)\cong A(1)_*\otimes E(a_2)
\]
where $a_2$ is in degree 9. This class has an interesting coproduct, but this does not concern us here. What is interesting for us, however, is that in order to compute this coproduct, Hill filters $A^{\tmf}_*$ (\cite[Theorem 2.2]{Hill_2007}), resulting in an algebraic spectral sequence
\[
E_1^{s,t,*}=\Ext^{s,t}_{E_0A^{\tmf}_*}(\F_3)\implies \Ext^{s,t}_{A^\tmf_*}(\F_3).
\]
One easily derives that 
\[
E_1^{s,t,*} = \Ext^{s,t}_{E_0A^\tmf_*}(\F_3)\cong \Ext^{s,t}_{A(1)_*}(\F_3)\otimes P(\widetilde{c_4})
\]
where $\widetilde{c_4}$ is the class represented in the cobar complex of $E_0A^{\tmf}_*$ by $[a_2]$. In particular, $\widetilde{c_4}\in E_1^{1, 9,1}$. The term $\Ext_{A(1)_*}(\F_3)$ is concentrated in filtration degree 0. It turns out that this $E_1$-page is in bijection with the $E_1$-term of our algebraic spectral sequence, but with various elements in ours in the ``wrong'' filtration. For example, our element $b_4$ corresponds to Hill's element $\widetilde{c}_4$. The former is in tridegree $(0,8,1)$ but the latter is in tridegree $(1,8,1)$.

 We provide a short dictionary relating various names in our spectral sequence to Hill's algebraic spectral sequence. 

\begin{center}
\begin{tabular}[c]{c|c}
	alg'c SS & Hill's alg'c SS\\ \hline 
	$b_4$ & $\widetilde{c_4}$\\ \hline 
	$v_2$ & $\widetilde{c_4}^2$\\ \hline
	$b_4v_2$ & $\widetilde{c_4}^3$\\ \hline
	\vdots & \vdots 
\end{tabular}
\end{center}
In particular, the element that Hill calls $\widetilde{c_4}^{2\ell+\varepsilon}$ corresponds to the element we call $b_4^\varepsilon v_2^{\ell}$. Moreover, Hill is able to derive a differential $d_1(\widetilde{c_4}) = \alpha_2$. Algebraic manipulation then yields the following differentials $d_2(\alpha_2\widetilde{c_4}^2) = v_1^3\beta$ and $d_2(v_0\widetilde{c_4}^2)=v_1^3\alpha_1$. These all correspond to various differentials we encounter in this paper as well, but interestingly, not all of them are algebraic differentials. On the one hand, the differential $d_1(\widetilde{c_4}^2) = -\widetilde{c_4}\alpha_2$ corresponds to our algebraic differential $d_1(v_2) = b_4\alpha_2$. But the differential $d_1(\widetilde{c_4}) = \alpha_2$ corresponds to an \emph{Adams} $d_2$-differential $d_2(b_4) = \alpha_2$.

In particular, half of Hill's algebraic differentials are seen in our algebraic spectral sequence, but the other half arise as Adams differentials. It is this discrepancy that makes the $\tmf$-relative Adams spectral sequence more computable as opposed to the absolute Adams spectral sequence.


\section{Rational Homotopy of $\tmf$ and modular forms}\label{sec: rational homotopy of tmf}

In this section, we examine the $v_0$-inverted Adams spectral sequence for $\tmf$. In this case, the $v_0$-inverted Adams spectral sequence converges to the rational homotopy groups of $\tmf$, 
\[
v_0^{-1}\Ext_{A_*}(H_*\tmf)\implies \pi_*\tmf_\Q.
\]

To determine the $v_0$-localized Adams $E_2$-term, we could take the decomposition from \textsection\ref{subsec: algebraic Einfty term} and invert $v_0$. Alternatively, we could use the $v_0$-localized algebraic spectral sequence, 
\[
v_0^{-1}\Ext_{A(1)_*}(\F_3)\otimes E(b_4)\implies v_0^{-1}\Ext_{A_*}(H_*\tmf).
\]
The latter approach is more convenient. Note that 
\[
v_0^{-1}\Ext_{\Gamma}(\F_3)\cong P(v_0^{\pm 1}, v_1^3, v_2).
\]
This shows that the $v_0$-localized algebraic $E_1$-term is concentrated in even stems, and hence collapses at $E_1$. Thus we find that
\[
v_0^{-1}\Ext_{A_*}(H_*\tmf)\cong P(v_0^{\pm 1},v_1^3,v_2)\otimes E(b_4)
\]
and it follows immediately that the $v_0$-localized ASS for $\tmf$ collapses at $E_2$. 

The spectrum $\tmf$ has a close connection to classical modular forms. The ring of integral modular forms, $MF_*$, has been known for quite some time.

\begin{thm}[cf. \cite{CourbesElliptiqueFormulaire}]
	The ring of integral modular forms is given by 
	\[
	MF_* = \Z[c_4, c_6, \Delta]/(c_4^3 - c_6^2-12^3\Delta).
	\]
\end{thm}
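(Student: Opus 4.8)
The plan is to identify $MF_*$ with the ring of sections of powers of the Hodge bundle over the moduli stack of generalized elliptic curves, and then to compute this ring via the Weierstrass presentation, following Deligne. By definition a weight-$k$ modular form is a section of $\omega^{\otimes k}$ over $\br{\mc M}_{\mr{ell}}$ defined over $\Z$, where $\omega$ denotes the Hodge line bundle and the grading is by weight (doubled in the topological normalization, so that $c_4,c_6,\Delta$ sit in degrees $8,12,24$).

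First I would present the Weierstrass stack by its Hopf algebroid $(A,\Gamma)$, with $A=\Z[a_1,a_2,a_3,a_4,a_6]$ classifying Weierstrass cubics $y^2+a_1xy+a_3y = x^3+a_2x^2+a_4x+a_6$ equipped with the invariant differential $\omega = dx/(2y+a_1x+a_3)$, and $\Gamma = A[u^{\pm 1},r,s,t]$ encoding the coordinate changes preserving this normal form. The classical quantities $c_4,c_6,\Delta$ are specific polynomials in the $a_i$ (built from the auxiliary $b_2,b_4,b_6,b_8$); a direct check shows they transform as sections of $\omega^{\otimes 4},\omega^{\otimes 6},\omega^{\otimes 12}$ respectively and satisfy the single relation $c_4^3-c_6^2 = 12^3\Delta$. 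This yields a graded ring map
\[
\Z[c_4,c_6,\Delta]/(c_4^3-c_6^2-12^3\Delta)\longrightarrow MF_*.
\]

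Next I would prove this map is an isomorphism. Over $\Z[1/6]$ any curve can be put into short Weierstrass form and $\Delta = (c_4^3-c_6^2)/12^3$ becomes redundant, so $MF_*[1/6]\cong \Z[1/6][c_4,c_6]$ and the statement is immediate there. To descend to $\Z$, I would invoke the $q$-expansion principle at level one: evaluation on the Tate curve over $\Z((q))$ sends a modular form to an integral $q$-expansion, and this assignment is injective and detects integrality. Computing $c_4 = 1+240q+\cdots$, $c_6 = 1-504q+\cdots$, and $\Delta = q\prod_{n\geq 1}(1-q^n)^{24}$ shows all three classes are integral; using the relation to reduce powers of $c_6$ produces the $\Z$-basis $\{c_4^i\Delta^k,\, c_4^ic_6\Delta^k\}$ of the left-hand ring, and comparison with the classical dimension formula for $M_k(SL_2(\Z))$ shows these monomials span $MF_*$ in every weight.

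The main obstacle is integral surjectivity at the primes $2$ and $3$, where $\br{\mc M}_{\mr{ell}}$ carries extra automorphisms and a purely analytic computation over $\C$ cannot see the $\Z$-structure. The crucial points are that $\Delta$ is a genuinely new generator over $\Z$---its $q$-expansion is integral despite the division by $12^3$---and that no further integral forms appear in any weight. Both follow from the $q$-expansion principle together with the integrality of the Tate curve, which is precisely the content of \cite{CourbesElliptiqueFormulaire}; alternatively, one computes the ring of $\Gamma$-invariants in $A$ directly and reads off the presentation.
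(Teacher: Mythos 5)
The paper offers no proof of this statement: it is a classical theorem of Deligne--Tate, quoted directly from the cited reference \cite{CourbesElliptiqueFormulaire} (note the ``cf.''), and nothing in the paper depends on how it is proved. Your outline --- the Weierstrass Hopf algebroid $(A,\Gamma)$, the covariants $c_4,c_6,\Delta$ with the relation $c_4^3-c_6^2=12^3\Delta$, the easy reduction over $\Z[1/6]$, and the $q$-expansion principle via the Tate curve together with the triangular basis $\{c_4^i\Delta^k,\,c_4^ic_6\Delta^k\}$ to settle the primes $2$ and $3$ --- is precisely the standard argument contained in that reference, so your proposal is correct and coincides with the approach the paper implicitly relies on.
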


Here, $c_4$ and $c_6$ are the normalized Eisenstein series of weight 4 and 6 respectively. Topologically, these have degree 8 and 12 respectively. The modular form $\Delta$ is often referred to as the \emph{modular discriminant}, and is a modular form of weight 12. The precise relationship between $\pi_*\tmf$ and integral modular forms is made by examining the Adams-Novikov spectral sequence. We need to make use of the following.  

\begin{thm}[cf. \cite{Bauer_2008}, \cite{Henriques}]\label{thm: edge homomorphism}
	The edge homomorphism for the Adams-Novikov spectral sequence for $\tmf$ is a map 
	\[
	\pi_*\tmf\to MF_*(\Z_{(3)}):= MF_*\otimes\Z_{(3)}
	\]
	where $MF_*$ is the ring of classical integral modular forms. The image of this map contains $c_4, c_6, 3\Delta, 3\Delta^2$ and $\Delta^3$. Moreover, this map is a rational isomorphism. 
\end{thm}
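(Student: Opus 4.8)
The plan is to read everything off the descent (Adams--Novikov) spectral sequence for $\tmf$, whose $E_2$-page is the cohomology of the moduli stack of elliptic curves with coefficients in powers of the Hodge bundle $\omega$:
\[
E_2^{s,t}=H^s(\mathcal{M}_{\mathrm{ell}};\omega^{\otimes t/2})\implies \pi_{t-s}\tmf,
\]
all taken $3$-locally. The first step is to identify the zero-line: the global sections $H^0(\mathcal{M}_{\mathrm{ell}};\omega^{\otimes *})$ are precisely the ring of integral modular forms $MF_*$, and after tensoring with $\Z_{(3)}$ this is the target $MF_*(\Z_{(3)})$. By definition the edge homomorphism is the composite $\pi_n\tmf\twoheadrightarrow E_\infty^{0,n}\hookrightarrow E_2^{0,n}$, which is therefore a ring map $\pi_*\tmf\to MF_*(\Z_{(3)})$, as claimed.

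For the rational statement I would show that the higher cohomology of $\mathcal{M}_{\mathrm{ell}}$ is torsion. Rationally the coarse moduli space is the affine $j$-line and the generic automorphisms contribute no higher rational cohomology, so $H^s(\mathcal{M}_{\mathrm{ell}};\omega^{\otimes *})\otimes\Q=0$ for $s>0$. Thus $E_2\otimes\Q$ is concentrated on the zero-line $MF_*\otimes\Q=\Q[c_4,c_6]$ (using that $\Delta=(c_4^3-c_6^2)/12^3$ is rational), the spectral sequence collapses rationally, and there are no rational filtration jumps. Hence the edge map is an isomorphism after tensoring with $\Q$.

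To determine the integral image, I would invoke the known $3$-primary differential pattern. The relevant permanent cycles off the zero-line are $\alpha\in\pi_3$ and $\beta\in\pi_{10}$, both annihilated by $3$, and the first nontrivial differential is $d_5(\Delta)\doteq\alpha\beta^2$. Because $3\alpha=0$, the Leibniz rule gives $d_5(3\Delta)=0$, $d_5(3\Delta^2)=\pm6\Delta\alpha\beta^2=0$, and $d_5(\Delta^3)=\pm3\Delta^2\alpha\beta^2=0$, so $3\Delta,3\Delta^2,\Delta^3$ are $d_5$-cycles; one then checks, using the sparsity of the chart and $\Delta^3$-periodicity, that no later differential can be supported on them. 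The forms $c_4$ and $c_6$ sit in stems $8$ and $12$, where the only classes live on the zero-line, so they are automatically permanent cycles. Since no differential can map into filtration zero, each of these surviving classes lifts to $\pi_*\tmf$ and maps to the corresponding modular form under the edge homomorphism.

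The main obstacle is the integral image claim: one must know the differentials $d_5(\Delta)\doteq\alpha\beta^2$ and the later $d_9(\alpha\Delta^2)\doteq\beta^5$ exactly, and verify that no further or hidden differential destroys $3\Delta$, $3\Delta^2$, or $\Delta^3$. This verification is precisely the computation carried out in \cite{Bauer_2008} (with charts in \cite{Henriques}); by contrast the identification of the zero-line and the rational collapse are formal consequences of the stack-theoretic description of the $E_2$-term.
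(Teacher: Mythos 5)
The paper does not actually prove this statement: it is imported wholesale from \cite{Bauer_2008} and \cite{Henriques} (note the ``cf.'' in the theorem header), and nothing in the paper reconstructs the argument. So there is no internal proof to compare against; what you have written is a sketch of the external argument, and in outline it is the standard one --- identify the zero-line of the descent spectral sequence with modular forms, observe rational collapse, and run Leibniz on the known $3$-primary differentials $d_5(\Delta)\doteq\alpha\beta^2$, $d_9(\alpha\Delta^2)\doteq\beta^5$ to see that $3\Delta$, $3\Delta^2$, $\Delta^3$ (but not $\Delta$, $\Delta^2$) are permanent cycles, while $c_4,c_6$ survive for sparsity reasons. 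That part of your sketch is correct, with the honest caveat (which you state) that ruling out later differentials is exactly the computation of \cite{Bauer_2008}.

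There is, however, a genuine gap in your setup: you work over the \emph{uncompactified} moduli stack $\mathcal{M}_{\mathrm{ell}}$ of smooth elliptic curves (your appeal to the affine $j$-line as coarse space confirms this). Over that stack, $\Delta$ is invertible, so the zero-line $H^0(\mathcal{M}_{\mathrm{ell}};\omega^{\otimes *})$ is the ring of \emph{weakly holomorphic} modular forms $MF_*[\Delta^{-1}]$, not $MF_*$, and the descent spectral sequence converges to periodic $\TMF$, not to connective $\tmf$. As written, both the source and the target of your edge homomorphism are wrong, and the distinction matters precisely for the content of the theorem (whether $\Delta$-power classes lie in the image, and whether the target is the integral ring $MF_*$). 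To repair this you must either work over the compactified stack $\overline{\mathcal{M}}_{\mathrm{ell}}$ --- whose descent spectral sequence computes $\Tmf$, after which one passes to the connective cover $\tmf$ --- or, as Bauer does, use the Weierstrass (cubic curve) Hopf algebroid, whose $\Ext^0$ is exactly $\Z[c_4,c_6,\Delta]/(c_4^3-c_6^2-1728\Delta)$. Note also that on the compactified stack your rational-vanishing claim ``$H^s\otimes\Q=0$ for $s>0$'' is false in negative weights (Serre duality produces rational $H^1$ there, which is what gives $\pi_{<0}\Tmf$); it holds in the non-negative range relevant to $\tmf$, but the argument should say so rather than assert vanishing outright.
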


\begin{rmk}
	Since the edge homomorphism is a map of rings, the theorem determines the image entirely. 
\end{rmk}

\begin{cor}\label{cor: rational homotopy of tmf}
	The rational homotopy groups of $\tmf$ are given by 
	\[
	\pi_*\tmf_\Q\cong \Q[c_4, c_6].
	\]
\end{cor}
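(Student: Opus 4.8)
The plan is to deduce $\pi_*\tmf_\Q$ directly from the preceding two theorems, treating the rational computation as a formal consequence rather than a fresh calculation. First I would invoke Theorem~\ref{thm: edge homomorphism}: the edge homomorphism $\pi_*\tmf\to MF_*(\Z_{(3)})$ is a rational isomorphism. Tensoring with $\Q$ therefore gives an isomorphism
\[
\pi_*\tmf_\Q\cong MF_*\otimes\Q.
\]
So the entire problem reduces to identifying $MF_*\otimes\Q$ as a ring.

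Next I would apply the structure theorem $MF_* = \Z[c_4,c_6,\Delta]/(c_4^3-c_6^2-12^3\Delta)$. The key observation is that, after inverting the integer $12^3$ (which becomes a unit rationally), the defining relation can be solved for $\Delta$, namely $\Delta = 12^{-3}(c_4^3-c_6^2)$. Thus $\Delta$ is redundant as a generator over $\Q$: the subring generated by $c_4$ and $c_6$ already contains it. This shows
\[
MF_*\otimes\Q\cong \Q[c_4,c_6,\Delta]/(c_4^3-c_6^2-12^3\Delta)\cong \Q[c_4,c_6],
\]
where the second isomorphism uses that eliminating $\Delta$ via the relation leaves a free polynomial algebra on $c_4,c_6$ with no remaining relations. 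Combining with the rational isomorphism from the edge homomorphism yields $\pi_*\tmf_\Q\cong\Q[c_4,c_6]$, which is exactly the claim.

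As a consistency check (and an alternative route that does not cite the modular forms literature) I would note that this matches the $v_0$-inverted Adams computation carried out just above the corollary: we found $v_0^{-1}\Ext_{A_*}(H_*\tmf)\cong P(v_0^{\pm1},v_1^3,v_2)\otimes E(b_4)$, the spectral sequence collapses, and $v_0$-inversion corresponds to rationalization. Matching bidegrees, the classes $v_1^3=c_6$ and $v_2$ (together with $b_4$) should account rationally for a polynomial algebra on two even-degree generators of internal degrees $8$ and $12$; the exterior class $b_4$ and the odd-degree phenomena must conspire to cut the naive answer down to $\Q[c_4,c_6]$. I would use this only as a sanity check on degrees rather than as the primary argument.

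The main obstacle here is essentially bookkeeping rather than conceptual: one must be careful that the edge homomorphism being a \emph{rational} isomorphism is genuinely enough to transport the ring structure, which is guaranteed because, as recorded in the remark following Theorem~\ref{thm: edge homomorphism}, the edge map is a ring homomorphism and its image is determined entirely by the multiplicative generators. The only subtlety is verifying that no torsion or extension phenomena survive rationalization — but this is automatic since rationalizing kills all the $3$-local torsion in $\pi_*\tmf$, leaving only the free part that the edge homomorphism detects isomorphically. Hence the corollary follows with no serious difficulty.
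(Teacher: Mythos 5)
Your proposal is correct and is precisely the argument the paper intends: the corollary is stated without proof immediately after Theorem~\ref{thm: edge homomorphism}, and the implicit reasoning is exactly yours — the edge homomorphism is a rational ring isomorphism, and rationally the relation $c_4^3-c_6^2=12^3\Delta$ makes $\Delta$ redundant, so $MF_*\otimes\Q\cong\Q[c_4,c_6]$. Your sanity check against the $v_0$-inverted Adams computation is also consistent (the hidden extension $(v_0b_4)^2\dot{=}v_0^2v_2$ is what reconciles $P(v_0^{\pm1},v_1^3,v_2)\otimes E(b_4)$ with a polynomial algebra on two generators), but it is correctly relegated to a check rather than the proof.
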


Theorem \ref{thm: edge homomorphism} will allows us to determine what some of the elements in the Adams $E_\infty$-term detect in $\pi_*\tmf$. It will also be used to later to establish hidden multiplicative extensions in \textsection \ref{subsec: hidden extensions}. We can carry some of this out even now. 

\begin{prop}\label{prop: v_1^3 detects c6}
	The class $v_1^3$ in the Adams $E_2$-term for $\tmf$ survives to a non-zero element in $E_\infty$ and detects the class $c_6$. 
\end{prop}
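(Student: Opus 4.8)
The plan is to separate the two assertions: first that $v_1^3$ is a nonzero permanent cycle, and then that the homotopy class it detects is a unit multiple of $c_6$. Recall that $v_1^3$ lies in $(t-s,s)$-bidegree $(12,3)$, sits in the $\Ext_{A(1)_*}(\F_3)$-submodule generated by $1$, and is $v_0$-torsion-free since it is a polynomial generator of $\Ext_{A(1)_*}(\F_3)$ (Proposition \ref{prop: Ext of A(1)}).

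For survival I would argue by degree reasons using the description of the Adams $E_2$-term. A differential $d_r(v_1^3)$ would land in stem $11$ at filtration $3+r\geq 5$; however, a direct inspection of the $E_2$-term (via the decomposition of \textsection\ref{subsec: algebraic Einfty term}) shows that stem $11$ is concentrated in the single class $\alpha_1 b_4$ in filtration $1$, so there is no possible target and $v_1^3$ is a permanent cycle. Dually, a differential hitting $v_1^3$ would have to originate in stem $13$ at filtration $3-r\leq 1$, whereas stem $13$ is concentrated in $\beta\alpha_1$ in filtration $3$; hence $v_1^3$ is not a boundary. Alternatively, one may invoke the $v_0$-localized spectral sequence: since $v_1^3$ is $v_0$-torsion-free it has nonzero image in $v_0^{-1}\Ext_{A_*}(H_*\tmf)\cong P(v_0^{\pm 1},v_1^3,v_2)\otimes E(b_4)$, and because the $v_0$-localized Adams spectral sequence collapses, this image is a nonzero permanent cycle which cannot be a boundary. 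Either way $v_1^3$ survives to a nonzero class of $E_\infty$.

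For detection I would use the edge homomorphism together with the rational computation. By Corollary \ref{cor: rational homotopy of tmf} we have $\pi_{12}\tmf_\Q\cong \Q\{c_6\}$, and by Theorem \ref{thm: edge homomorphism} the modular form $c_6$ lies in the image of the edge homomorphism; let $x\in\pi_{12}\tmf$ be a class mapping to it. Since $v_1^3$ is $v_0$-torsion-free, the element of $\pi_{12}\tmf$ it detects is rationally nonzero, so under the rationally injective edge homomorphism it maps to a nonzero multiple of $c_6$. Because $v_1^3$ is not $v_0$-divisible in $E_\infty$ (there is no class in stem $12$ of Adams filtration $2$), the detected element is not $3$-divisible modulo higher filtration, so this multiple is a $3$-local unit; hence the detected element agrees up to a unit with $x$, and therefore $v_1^3$ detects $c_6$.

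The main obstacle is the final step of the detection argument: ruling out that $v_1^3$ detects $3\cdot(\text{something})$ rather than a unit multiple of $c_6$, i.e. controlling $v_0$-divisibility and the precise Adams filtration of the generator of $\pi_{12}\tmf$. This is exactly where one must combine the absence of a stem-$12$, filtration-$2$ class with the fact (Theorem \ref{thm: edge homomorphism}) that $c_6$ itself, and not merely $3c_6$, lies in the image of the edge homomorphism.
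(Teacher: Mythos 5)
Your proposal is correct and follows essentially the same route as the paper: survival of $v_1^3$ by degree reasons (the paper merely asserts this; you spell out that stems 11 and 13 have nothing in the relevant filtrations), and detection by combining Theorem \ref{thm: edge homomorphism} with the fact that the $v_0$-tower on $v_1^3$ is the only candidate in stem 12. One caveat: your ``alternative'' survival argument via the $v_0$-localized spectral sequence is not sufficient on its own, since a differential on $v_1^3$ whose target is a $v_0$-torsion class would become invisible after inverting $v_0$; but as your primary degree-counting argument is complete, this does not affect the proof.
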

\begin{proof}
	It follows for degree reasons that $v_1^3$ cannot support or be targeted by a differential. This implies that $v_1^3$ survives to a nonzero element in $E_\infty$. From Theorem \ref{thm: edge homomorphism}, we know that some torsion free element in the 12 stem must detect $c_6$. From the Adams spectral sequence calculation we see that the only $v_0$-tower in stem 12 is the one generated by $v_1^3$. Thus $v_1^3$ detects $c_6$. 
\end{proof}


\section{Adams differentials}\label{sec: differentials}

In this section, we will determine the differentials in the Adams spectral sequence for $\tmf$. Since $\tmf$ is a commutative ring spectrum, the Adams spectral sequence is multiplicative. Begin by noting that there are several classes which are permanent cycles for degree reasons. 

\begin{lem}
The classes $v_0,\alpha_1, \alpha_2, \beta$, and $c_6$ are all permanent cycles for the Adams spectral sequence. Consequently, the differentials in the Adams spectral sequence are linear over $\Ext_{A(1)_*}(\F_3)$.
\end{lem}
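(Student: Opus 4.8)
The plan is to handle the five generators one at a time and then derive the linearity statement formally from multiplicativity. The guiding principle throughout is that a class survives as soon as it is pulled back along a map of spectral sequences from a class already known to be a permanent cycle, or as soon as all of its potential targets vanish for degree reasons.

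First I would dispose of $v_0$ and $c_6$ directly. The class $v_0$ detects multiplication by $3$ on $\tmf$, which is induced by a self-map of the spectrum; since a map of spectra induces a map of Adams spectral sequences carrying permanent cycles to permanent cycles, $v_0$ survives. (One could equally note that $v_0$ sits in stem $0$ while every $d_r$ lands in a negative stem, which vanishes by connectivity.) The class $c_6 = v_1^3$ is a permanent cycle by Proposition \ref{prop: v_1^3 detects c6}, where it was already shown to survive to $E_\infty$ and detect the modular form $c_6$.

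For the Greek-letter classes $\alpha_1$, $\alpha_2$, and $\beta$ the cleanest route is the unit map $S \to \tmf$, which induces a map from the Adams spectral sequence of the $3$-local sphere to that of $\tmf$. The classes $\alpha_1 \in \Ext^{1,4}$, $\alpha_2 \in \Ext^{2,9}$, and $\beta \in \Ext^{2,12}$ are the images of the corresponding classes in the Adams $E_2$-term of the sphere, each of which is a permanent cycle there (they detect the homotopy elements $\alpha_1 \in \pi_3 S$, $\alpha_2 \in \pi_7 S$, and $\beta_1 \in \pi_{10} S$). As permanent cycles map to permanent cycles, all three survive in the spectral sequence for $\tmf$. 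As a fallback, each of these can be argued purely by degree reasons: one reads off from the chart of the Adams $E_2$-term that the relevant groups in lower stems and higher filtrations, which are the only possible targets, are zero.

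Finally, the ``consequently'' is immediate from multiplicativity of the Adams spectral sequence. By Proposition \ref{prop: Ext of A(1)} the algebra $\Ext_{A(1)_*}(\F_3)$ is generated by $v_0, c_6, \beta, \alpha_1, \alpha_2$, and since each generator is a permanent cycle the Leibniz rule $d_r(xy) = d_r(x)\,y \pm x\,d_r(y)$ reduces to $d_r(xy) = \pm x\,d_r(y)$ whenever $x$ is a product of these generators; hence every differential is $\Ext_{A(1)_*}(\F_3)$-linear. I expect the only delicate point to be the bookkeeping that identifies the classes named $\alpha_1, \alpha_2, \beta$ in our $E_2$-term with the images of the sphere classes under the unit map (a matching of names across the two computations); if this identification is awkward to make precise, the degree-reasons verification against the chart—checking carefully that no target exists for $\alpha_2$ and $\beta$, whose targets lie in nonzero stems—provides a self-contained alternative.
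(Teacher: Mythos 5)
Your proposal is correct, but its primary route is genuinely different from the paper's. The paper offers no written proof at all: the lemma is justified by the single preceding sentence that these classes are permanent cycles ``for degree reasons,'' i.e.\ exactly your fallback argument. That argument does work and is the most economical one: $v_0$ sits in stem $0$ and stem $-1$ vanishes; the targets of differentials on $\alpha_1$, $\alpha_2$, $\beta$ lie in stems $2$, $6$, $9$, which are zero on the $E_2$-term; and $c_6=v_1^3$ was already handled this way in Proposition \ref{prop: v_1^3 detects c6} (the only class in stem $11$ is $b_4\alpha_1$ in filtration $2$, too low to be a target). Your primary route---pulling $\alpha_1$, $\alpha_2$, $\beta$ back along the unit map $S^0\to\tmf$---is a valid alternative, and in fact the paper itself uses precisely this mechanism later (Proof 2 of Proposition \ref{prop: d2b4}, where $\alpha_1$ and $\beta$ are identified with images of sphere classes via cobar representatives and the Massey product $\beta=\langle\alpha_1,\alpha_1,\alpha_1\rangle$). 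What the unit-map route buys is stronger information (Hurewicz image statements, later exploited for differentials); what it costs is exactly the bookkeeping you flag, which for $\alpha_2$ requires naturality of the Massey product $\alpha_2\in\langle v_0,\alpha_1,\alpha_1\rangle$ together with a zero-indeterminacy check on the $\tmf$ side---noticeably more work than the empty-stem inspection.

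One sub-argument in your write-up would fail as stated: the claim that $v_0$ survives because it ``detects multiplication by $3$, which is induced by a self-map of the spectrum.'' Multiplication by $3$ induces the zero map on mod $3$ homology, hence the zero map of Adams $E_2$-terms, so ``permanent cycles map to permanent cycles'' yields no information here. (The correct version of this style of argument is that $v_0$ is the image of $a_0\in\Ext_{A_*}(\F_3)$ under the unit map.) Your parenthetical degree-reasons observation for $v_0$ is airtight, however, so this is a local flaw in phrasing rather than a gap in the proof.
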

This observation is very useful for our calculation for the following reason. In the last section, we have expressed the Adams $E_2$-term as a direct sum of certain patterns which were modules over $\Ext_{A(1)_*}(\F_3)$. This observation implies that the only nonzero $d_2$-differentials in the Adams spectral sequence will originate on the monomials which generate these patterns. We will also make frequent use of the following facts about $\pi_*\tmf$.  

\begin{thm}[cf. \cite{Henriques}]
	The homotopy groups of $\tmf$ are 72 periodic. Furthermore, the torsion in $\pi_*\tmf$ is concentrated in stems 3, 10, 13, 20, 27, 30, 37, and 40 modulo 72.
\end{thm}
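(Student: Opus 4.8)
The plan is to treat this statement as external input, since the Adams differentials established later in \textsection\ref{sec: differentials} are deduced \emph{from} it rather than the reverse; the cleanest derivation is through the elliptic (descent) spectral sequence, following the $3$-primary part of \cite{Bauer_2008} and \cite{Henriques}. First I would record the $E_2$-page, the cohomology of the moduli stack of elliptic curves. At the prime $3$ its filtration-$0$ line is the ring of $3$-local modular forms $MF_*\otimes\Z_{(3)}$, generated by $c_4$ (stem $8$), $c_6$ (stem $12$), and $\Delta$ (stem $24$); above it sit the torsion classes $\alpha$ in stem $3$ and $\beta$ in stem $10$, subject to $\alpha^2=0$, $3\alpha=3\beta=0$, and $c_4\alpha=c_6\alpha=c_4\beta=c_6\beta=0$.

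Next I would establish the two essential differentials $d_5(\Delta)\,\dot{=}\,\alpha\beta^2$ and $d_9(\alpha\Delta^2)\,\dot{=}\,\beta^5$, and verify by multiplicativity that these account for everything. Because $3\alpha=0$, one computes $d_5(\Delta^3)=3\Delta^2\alpha\beta^2=0$, so the stem-$72$ class $\Delta^3$ is a permanent cycle; I would argue it detects a torsion-free homotopy class on which multiplication is an isomorphism, which yields the $72$-periodicity. Reading off $E_\infty$, the $d_5$ and $d_9$ differentials (together with their $\beta$- and $\Delta^3$-translates) kill $\alpha\beta^2$, $\beta\Delta$, $\alpha\beta^3$, $\alpha\beta^4$, $\Delta^2$, $\beta^5$, and so on, leaving precisely the torsion classes $\alpha$, $\beta$, $\alpha\beta$, $\beta^2$, $\alpha\Delta$, $\beta^3$, $\alpha\beta\Delta$, $\beta^4$ in stems $3,10,13,20,27,30,37,40$, each a single copy of $\Z/3$; the $\Delta^3$-periodicity then repeats this pattern modulo $72$. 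Invoking Corollary \ref{cor: rational homotopy of tmf}, the rational homotopy $\Q[c_4,c_6]$ is concentrated in even stems, confirming that the $v_0$-towers contribute no further torsion.

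The main obstacle is justifying the two differentials from first principles: neither is formal, and both require genuine input—for instance comparison with the image of the stable stems (Toda's $\alpha$- and $\beta$-family relations) or the Hopkins--Miller descent package, as carried out in \cite{Bauer_2008} and \cite{Henriques}. A secondary concern is excluding hidden extensions in the descent spectral sequence that might alter the additive structure; since each listed stem carries only one $\Z/3$ and the torsion is annihilated by $c_4$ and $c_6$, no multiplicative extension can merge these groups, so the additive conclusion is unaffected. For the purposes of this paper I would simply cite \cite{Henriques} and \cite{Bauer_2008} for these facts and use the resulting description of $\pi_*\tmf$ as the stated input to the Adams differential arguments that follow.
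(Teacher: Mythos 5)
Your proposal is correct and matches the paper's treatment: the paper gives no internal proof of this statement, importing it wholesale from \cite{Henriques} (and implicitly \cite{Bauer_2008}) precisely because, as you note, the Adams differentials in \textsection\ref{sec: differentials} are deduced \emph{from} it and any internal derivation would be circular. Your sketch of the descent spectral sequence computation --- the differentials $d_5(\Delta)\,\dot{=}\,\alpha\beta^2$ and $d_9(\alpha\Delta^2)\,\dot{=}\,\beta^5$, the permanence of $\Delta^3$ giving $72$-periodicity, and the resulting torsion classes $\alpha,\beta,\alpha\beta,\beta^2,[\alpha\Delta],\beta^3,[\alpha\beta\Delta],\beta^4$ in stems $3,10,13,20,27,30,37,40$ --- is an accurate account of what the cited sources prove.
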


We will begin by determining all of the length 2 differentials.

\subsection{Adams $d_2$-differentials}

As was mentioned previously, the Adams differentials are all linear over $\Ext_{A(1)_*}(\F_3)$, which means we only have to figure which of the following families of monomials support Adams $d_2$-differentials: For any natural number $j$
\begin{enumerate}
	\item $v_2^{3j}$,
	\item $v_2^jb_4$,
	\item $v_0v_2^j$ for $j\equiv 1, 2\mod 3$, and 
	\item $v_2^j\alpha_2$ for $j\equiv 1, 2\mod 3$.
\end{enumerate}
From our charts (Figures \ref{fig: Adams E_2 term 0-40} and \ref{fig: Adams E_2 term 40-80}  below), one sees that $v_2^3$ can support a length 3 differential at minimum. Thus, $v_2^{3j}$ is a $d_2$-cycle for all $j$. Moreover, there are several multiplicative relations on the Adams $E_2$-term which we get from the previous section. For example, we have $v_2^3\cdot(v_2b_4) = v_2^4b_4$.  Consequently, we have 

\begin{prop}
	The Adams $d_2$-differentials for $\tmf$ are linear over $\Ext_{A(1)_*}(\F_3)\otimes P(v_2^3)$. Thus, we only need to determine which of the monomials $b_4$, $v_0v_2$, $v_2\alpha_2$, $b_4v_2$, $v_0v_2^2$, $v_2^2\alpha_2$, and $b_4v_2^2$ support $d_2$-differentials.
\end{prop}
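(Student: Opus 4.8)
The plan is to promote the $\Ext_{A(1)_*}(\F_3)$-linearity of the Adams differentials already recorded in the lemma above to linearity over the larger coefficient ring $\Ext_{A(1)_*}(\F_3)\otimes P(v_2^3)$, and then to trim the four infinite families of module generators of the Adams $E_2$-term down to a finite list by reducing $v_2$-exponents modulo $3$.

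First I would recall that $v_0,\alpha_1,\alpha_2,\beta$, and $c_6$ generate $\Ext_{A(1)_*}(\F_3)$ (Proposition \ref{prop: Ext of A(1)}) and are permanent cycles, so the Leibniz rule already forces every Adams differential to be $\Ext_{A(1)_*}(\F_3)$-linear. To upgrade $d_2$ to $P(v_2^3)$-linearity it is enough to verify that $v_2^3$ is a $d_2$-cycle, since then $d_2(v_2^3\cdot x)=v_2^3\,d_2(x)$ for all $x$ by Leibniz. This is the one non-formal step, and I expect it to be the \emph{main obstacle}: the class $v_2^3$ occupies bidegree $(t-s,s)=(48,3)$, so a $d_2$ on it would land in $(47,5)$, and one must read off from the Adams $E_2$-charts (Figures \ref{fig: Adams E_2 term 0-40} and \ref{fig: Adams E_2 term 40-80}) that the shortest differential $v_2^3$ could support has length three. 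Granting this, $d_2$ is linear over $\Ext_{A(1)_*}(\F_3)\otimes P(v_2^3)$, which is the first assertion.

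To conclude I would run the finite reduction. From the description of the algebraic $E_\infty$-term, the Adams $E_2$-term is generated as an $\Ext_{A(1)_*}(\F_3)$-module by the four families $v_2^{3j}$, $v_2^jb_4$, $v_0v_2^j$ for $j\equiv 1,2\mod 3$, and $v_2^j\alpha_2$ for $j\equiv 1,2\mod 3$. Writing $j=3q+r$ with $r\in\{0,1,2\}$ and pulling out the factor $(v_2^3)^q$, $P(v_2^3)$-linearity shows that $d_2$ on any such generator is $(v_2^3)^q$ times its value on the representative with exponent $r$. The family $v_2^{3j}=(v_2^3)^q$ consists entirely of $d_2$-cycles and may be discarded, while the remaining three families reduce to $b_4,v_2b_4,v_2^2b_4$; then $v_0v_2,v_0v_2^2$; and finally $v_2\alpha_2,v_2^2\alpha_2$. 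These are precisely the seven monomials named in the statement, so only they remain to be analyzed.
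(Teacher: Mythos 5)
Your proposal is correct and follows essentially the same route as the paper: the paper likewise combines the permanent-cycle lemma (giving $\Ext_{A(1)_*}(\F_3)$-linearity via Leibniz) with the chart observation that $v_2^3$ can support at minimum a length~3 differential, and then uses the multiplicative relations inherited from the algebraic spectral sequence (e.g.\ $v_2^3\cdot(v_2b_4)=v_2^4b_4$) to reduce the generating families to the seven listed monomials. Your explicit bidegree check $(48,3)\to(47,5)$ is just a spelled-out version of the paper's chart reading, so there is no substantive difference.
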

%

\begin{prop}\label{prop: d2b4}
There is an Adams $d_2$-differential
\begin{equation}\label{eq: d2b4}
d_2(b_4)\,\dot{=}\, \alpha_2.
\end{equation}
\end{prop}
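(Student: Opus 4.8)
The plan is to force this differential from our knowledge of $\pi_*\tmf$ together with the sparseness of the Adams $E_2$-term near stems $7$ and $8$; no delicate Massey-product or Convergence-Theorem input is needed here. First I would record the relevant degrees. The class $\alpha_2$ has Adams bidegree $(s,t)=(2,9)$, so it lives in stem $7$ and filtration $2$, while $b_4$ has bidegree $(s,t)=(0,8)$, so it lives in stem $8$ and filtration $0$. An Adams $d_2$ raises filtration by $2$ and lowers the stem by $1$, so $d_2(b_4)$ and $\alpha_2$ occupy the same tridegree and the asserted differential is at least degree-admissible.

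Next I would pin down the two relevant groups using the decomposition into patterns from \textsection\ref{subsec: algebraic Einfty term}. In stem $7$ the only contribution to the $E_2$-term comes from the copy of $\Ext_{A(1)_*}(\F_3)$ on the generator $1$ (Pattern 1 with $j=0$), and by Proposition \ref{prop: Ext of A(1)} the unique element of $\Ext_{A(1)_*}(\F_3)$ in stem $7$ is $\alpha_2$; hence $\alpha_2$ is the only class of the Adams $E_2$-term in stem $7$. For the source, note that every algebra generator of $\Ext_\Gamma(\F_3)$ other than $b_4$ has positive cohomological degree, so the only class of filtration $0$ in positive stems is $b_4$ itself. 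In particular $b_4$ is the unique class in stem $8$, filtration $0$.

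Then I would invoke the known homotopy. From the computation of $\pi_*\tmf$ (cf. \cite{Bauer_2008}), or equivalently from the periodicity theorem together with the observation that $7$ is odd and is not a torsion stem modulo $72$, we have $\pi_7\tmf=0$. Consequently $\alpha_2$ cannot survive to $E_\infty$. Since $\alpha_2$ is a permanent cycle (as recorded in the lemma opening this section), it cannot support a differential, so it must be hit by one. A differential $d_r$ landing in filtration $2$ originates in filtration $2-r$, which is non-negative only for $r=2$; thus $\alpha_2$ can only be the target of a $d_2$ from stem $8$, filtration $0$, and by the previous paragraph the unique such source is $b_4$. Because $\alpha_2$ must die, this $d_2$ is necessarily nonzero, giving $d_2(b_4)\,\dot{=}\,\alpha_2$.

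There is no serious obstacle once the bookkeeping is in place; the only points needing care are the two uniqueness claims, which follow from Proposition \ref{prop: Ext of A(1)} and the filtration grading. As a sanity check I would verify consistency with the rational picture: this differential kills $b_4$ but not $v_0b_4$, since $d_2(v_0b_4)=v_0\alpha_2=0$ by the relation $v_0\alpha_2$ in $\Ext_{A(1)_*}(\F_3)$, so the $v_0$-tower on $v_0b_4$ survives and detects the torsion-free class $c_4\in\pi_8\tmf$ of Theorem \ref{thm: edge homomorphism}. This also matches the comparison with Hill's $\tmf$-relative spectral sequence in \textsection\ref{subsection: comparison}, where $d_1(\widetilde{c_4})=\alpha_2$ corresponds precisely to the Adams differential $d_2(b_4)=\alpha_2$.
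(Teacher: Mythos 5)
Your proposal is correct and takes essentially the same approach as the paper's own first proof: the vanishing $\pi_7\tmf=0$ forces $\alpha_2$ to die, and since it is a permanent cycle in filtration $2$ it can only be hit by a $d_2$ from the unique filtration-$0$ class $b_4$ in stem $8$, giving $d_2(b_4)\,\dot{=}\,\alpha_2$. (The paper also records a second, independent proof via the Hurewicz image and the relation $v_0\beta=\alpha_1\alpha_2$, but your argument is its Proof 1 with the uniqueness bookkeeping made explicit.)
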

\begin{proof}[Proof 1]
From the known computation of $\pi_*\tmf$, it is seen that $\pi_7\tmf=0$. Thus, the class $\alpha_2$ must die. The only possibility is the claimed differential.
\end{proof}
\begin{proof}[Proof 2]
	Recall that in the Adams $E_2$-term for $S^0$, we have the Massey product
	\[
	\beta_1 = \langle \alpha_1, \alpha_1, \alpha_1\rangle. 
	\]
	In the homotopy groups for the sphere, there is the same Toda bracket by Moss' convergence theorem (see \cite{Moss_1970} or \cite[Theorem 3.1.1]{StableStems}). Note also that $\alpha_1$ is represented in the cobar complex by $[\zeta_1]$. 
	
	In the Adams $E_2$-term for $\tmf$, there are classes of the same name and the same Massey product holds. In the cobar complex for $\tmf$, the class $\alpha_1$ is also represented by $[\zeta_1]$. Thus, under the induced map on $\Ext$ groups
	\[
	\Ext_{A_*}(\F_3)\to \Ext_{A_*}(H_*\tmf)
	\]
	$\alpha_1$ is sent to $\alpha_1$. The Massey product then shows that $\beta_1$ is mapped to $\beta$. Hence, $\alpha_1$ and $\beta$ are in the Hurewicz image of the sphere. 

It is known from the Adams spectral sequence for the sphere that there is a $d_2$-differential whose target is $v_0\beta$ (cf. \cite[Figure 1.2.15]{greenbook}). Since $\beta$ is in the Hurewicz image, it follows that $v_0\beta=0$ in $\pi_*\tmf$. This forces the Adams differential
	\[
	d_2(b_4\alpha_1)\, \dot{=}\, v_0\beta = \alpha_1\alpha_2.
	\]
	However, as $\alpha_1$ is a permanent cycle for the ASS for $\tmf$, we must have that 
	\[
	d_2(b_4)\, \dot{=}\, \alpha_2
	\]
	as stated.
\end{proof}

\begin{rmk}
	This is one of the Adams differentials which occurs as an algebraic differential in \cite{Hill_2007}.
\end{rmk}

We can draw an interesting consequence from the second argument provided above (we learned this from Mike Hill and Mark Behrens). 

\begin{cor}
	The element $b_4\alpha_1$ is the image of $h_1\in \Ext(S^0)$ under the map 
	\[
	\Ext(S^0)\to \Ext(\tmf),
	\]
	and consequently we have the hidden comodule extension in $H_*\tmf$, 
	\[
	\alpha(\zeta_1^3) = \zeta_1^3\otimes 1- \zeta_1\otimes b_4+ 1\otimes \zeta_1^3
	\]
	where $\alpha: H_*\tmf\to A_*\otimes H_*\tmf$ denotes the $A_*$-coaction on $H_*\tmf$. 
\end{cor}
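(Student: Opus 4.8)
The two assertions are really two faces of one computation, and the plan is to pass freely between the reduced-coaction (cobar) description of $\Ext$ and the Adams spectral sequence. The map $f\colon\Ext(S^0)\to\Ext(\tmf)$ is induced on cobar complexes by the unit $\F_3\to H_*\tmf$, which sends a cocycle $\omega$ to $\omega\otimes 1$. The class $h_1\in\Ext^{1,12}(S^0)$, dual to $\mathcal{P}^3$, is represented by the primitive $[\zeta_1^3]$ (note $\zeta_1^3$ is a cube of a primitive, hence primitive), so $f(h_1)$ is the class of the $1$-cocycle $\zeta_1^3\otimes 1$. On the other side, $\alpha_1$ is represented by $[\zeta_1]$ (as in Proof 2 of Proposition \ref{prop: d2b4}) and $b_4$ is a comodule primitive, so the product $b_4\alpha_1$ is represented by $\zeta_1\otimes b_4$. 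Thus the first assertion amounts to saying that the cocycles $\zeta_1^3\otimes 1$ and $\zeta_1\otimes b_4$ are cohomologous up to a unit.

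First I would show $f(h_1)\neq 0$ by naturality. In $\Ext(S^0)$ there is the Adams differential $d_2(h_1)\,\dot{=}\,v_0\beta$ (this is the differential hitting $v_0\beta$ in \cite[Figure 1.2.15]{greenbook}); since $\beta\mapsto\beta$ and $v_0\mapsto v_0$ (established in Proof 2 of Proposition \ref{prop: d2b4}), $f$ carries $v_0\beta$ to the nonzero class $v_0\beta\in\Ext(\tmf)$. Because $f$ is a map of spectral sequences, $d_2(f(h_1))\,\dot{=}\,v_0\beta\neq 0$, whence $f(h_1)\neq 0$. Now $\Ext^{1,12}(\tmf)$ is one-dimensional, spanned by $b_4\alpha_1$: this stem-$11$, filtration-$1$ group contains only the monomial $b_4\alpha_1$ in the $E_2$-term description of \textsection\ref{subsec: algebraic Einfty term}. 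Therefore $f(h_1)\,\dot{=}\,b_4\alpha_1$, which is the first claim.

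For the coaction formula I would argue that it is forced by the first claim together with a dimension count in $H_*\tmf$. The reduced coaction $\bar\alpha(\zeta_1^3)=\alpha(\zeta_1^3)-1\otimes\zeta_1^3$ lives in $(\bar A_*\otimes H_*\tmf)_{12}$. In degrees below $12$, $H_*\tmf$ is concentrated in degrees $0$ and $8$, with $(H_*\tmf)_0=\F_3\{1\}$ and $(H_*\tmf)_8=\F_3\{b_4\}$ (as $\B$ has no positive-degree element below $12$), while $(\bar A_*)_{12}=\F_3\{\zeta_1^3\}$ and $(\bar A_*)_4=\F_3\{\zeta_1\}$. Hence only two terms can occur:
\[
\bar\alpha(\zeta_1^3)=c'\,\zeta_1^3\otimes 1 + c\,\zeta_1\otimes b_4,\qquad c',c\in\F_3 .
\]
Applying $1\otimes q$ for the comodule quotient $q\colon H_*\tmf\to\B$ (on which the coaction is the restriction of the $A_*$-coproduct, so $\bar\alpha_\B(\zeta_1^3)=\zeta_1^3\otimes 1$) and using $q(b_4)=0$ forces $c'=1$. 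Finally, since the cobar differential $d^0$ is exactly $\bar\alpha$, the identity $d^0(\zeta_1^3)=\zeta_1^3\otimes 1 + c\,\zeta_1\otimes b_4$ exhibits $\zeta_1^3\otimes 1\equiv -c\,\zeta_1\otimes b_4$ modulo coboundaries, i.e.\ $f(h_1)=-c\cdot b_4\alpha_1$; the nonvanishing of $f(h_1)$ proved above forces $c\neq 0$, which is precisely the nontriviality of the hidden extension. Reading off $\alpha(\zeta_1^3)=\zeta_1^3\otimes 1 + c\,\zeta_1\otimes b_4 + 1\otimes\zeta_1^3$ then yields the stated formula.

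The main obstacle is bookkeeping rather than conceptual. One must fix cobar and antipode conventions ($\zeta_1=\chi\xi_1$, and the signs in the cobar product and differential) carefully enough to pin the coefficient to $c=-1$ rather than merely $c=\pm 1$, and one should verify the representative $\zeta_1\otimes b_4$ for $b_4\alpha_1$, which uses that $b_4$ is a comodule primitive so that left multiplication by $b_4$ in the cobar DGA simply tensors the coefficient and introduces no new tensor factors. All of the genuine content, namely the nonvanishing of the hidden-extension coefficient $c$, is supplied for free by the $d_2$-naturality argument, so the remaining work is purely a sign computation.
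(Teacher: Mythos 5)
Your proposal is correct and follows essentially the same route as the paper: naturality of the Adams $d_2$-differential $d_2(h_1)\,\dot{=}\,v_0\beta$ (together with $\alpha_1\mapsto\alpha_1$, $\beta\mapsto\beta$, $v_0\mapsto v_0$ under the unit map) identifies the image of $h_1$ with $b_4\alpha_1$, and comparing the cobar representatives $[\zeta_1^3]$ and $[\zeta_1]b_4$ forces the hidden coaction on $\zeta_1^3$, with degree considerations showing these are the only terms that can appear. The only cosmetic differences are that you deduce $f(h_1)\,\dot{=}\,b_4\alpha_1$ from nonvanishing plus one-dimensionality of $\Ext^{1,12}_{A_*}(H_*\tmf)$ rather than invoking naturality directly, and that you are explicit (as the paper is not) that the coefficient is pinned down only up to a unit, which is all a convention-independent argument can give.
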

\begin{proof}
	In the Adams spectral sequence for the sphere, it is the class $h_1$ which supports a $d_2$-differential killing $v_0\beta$. Naturality of the Adams spectral sequence implies that $h_1$ maps to $b_4\alpha_1$. 
	
	In the cobar complex for $S^0$, the element $h_1$ is represented by $\zeta_1^3$. On the other hand, we can represent $b_4\alpha_1$ in the cobar complex for $\tmf$ by $[\zeta_1]b_4$. Thus there must be an element of $H_*\tmf$ which bounds the difference between $[\zeta_1^3]$ and $[\zeta_1]b_4$. The only possibility is 
	\[
	d([]\zeta_1^3) = [\zeta_1^3]-[\zeta_1]b_4.
	\]
	This implies the claimed coaction. 
\end{proof}

\begin{prop}
	There is an Adams $d_2$-differential 
	\begin{equation}\label{eq: d2v0v2}
	d_2(v_0v_2)\,\dot{=}\,c_6\alpha_1.
	\end{equation}
\end{prop}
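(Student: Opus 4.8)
The plan is to deduce this differential from the vanishing of $\pi_{15}\tmf$ together with a bookkeeping of the $E_2$-term in stems $15$ and $16$, in the same spirit as the first proof of Proposition \ref{prop: d2b4}.

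First I would observe that $c_6\alpha_1$ is a nonzero permanent cycle of the Adams spectral sequence: it is the product of the permanent cycles $c_6 = v_1^3$ and $\alpha_1$, and it is nonzero because it lies in the copy of $\Ext_{A(1)_*}(\F_3)$ supported on the unit class (Pattern 1 with $j=0$). Since the free part of $\pi_*\tmf$ is $\Q[c_4,c_6]$, concentrated in even stems, and the torsion is concentrated in the stems $3,10,13,20,27,30,37,40$ modulo $72$, and since $15$ is odd and congruent to none of these, we have $\pi_{15}\tmf = 0$. Hence $c_6\alpha_1$ cannot survive to $E_\infty$ and must be the target of an incoming differential.

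The second, and main, step is to show that $v_0 v_2$ is the only possible source. The key input is that $v_2$ does \emph{not} survive the algebraic spectral sequence, since it supports the differential $d_1(v_2)\,\dot{=}\, b_4\alpha_2$ of Proposition \ref{prop: algcSS differentials}; thus $v_2$ is not a class of the Adams $E_2$-term. Running through the patterns of \textsection\ref{subsec: algebraic Einfty term}, the only contributions to stem $16$ come from Pattern 3 on the generator $v_2$, giving exactly the $v_0$-tower $\{v_0^k v_2 : k\geq 1\}$ in filtrations $s = k+1\geq 2$; in particular neither the unit nor $b_4$ contributes, as $\Ext_{A(1)_*}(\F_3)$ is trivial in stems $16$ and $8$. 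As $c_6\alpha_1$ sits in filtration $s=4$, the only class positioned to hit it by an admissible Adams differential is $v_0 v_2$ (in filtration $s=2$), necessarily via a $d_2$, since every higher element $v_0^k v_2$ with $k\geq 2$ would require a nonexistent differential of length less than $2$. Because $c_6\alpha_1$ must die, this forces $d_2(v_0 v_2)\,\dot{=}\, c_6\alpha_1$.

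I expect the main obstacle to be the careful, stem-by-stem verification that stem $16$ of the Adams $E_2$-term consists only of the $v_0$-multiples of $v_2$, so that $v_0 v_2$ really is the unique possible source and the differential cannot instead be absorbed elsewhere. As a consistency check, $v_0$-linearity of the differentials then forces $d_2(v_0^k v_2) = v_0^{k-1}c_6\alpha_1 = 0$ for $k\geq 2$, since $v_0\alpha_1 = 0$; thus the rest of the $v_0$-tower in stem $16$ survives $d_2$, exactly as it must in order to detect the modular form $c_4^2$ rationally.
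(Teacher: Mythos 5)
Your proposal is correct and takes essentially the same approach as the paper's first proof: both use $\pi_{15}\tmf=0$ to force the permanent cycle $c_6\alpha_1$ to be hit, with $v_0v_2$ as the only possible source (your stem-16 bookkeeping simply makes explicit what the paper leaves implicit). The paper also records a second proof, via the Massey product $c_6\alpha_1 \,\dot{=}\, \langle v_0,\alpha_2,\alpha_2\rangle$ and Moss' convergence theorem, which avoids a priori knowledge of $\pi_*\tmf$.
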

\begin{proof}[Proof 1]
	It is known that $\pi_{15}(\tmf)=0$. The only nonzero class in this stem on the Adams $E_2$-term for $\tmf$ is $c_6\alpha_1$. Thus, this class must die. The only possibility is the claimed differential.
\end{proof}

\begin{proof}[Proof 2]
	We provide a second proof which does not rely on a priori knowledge of $\pi_*\tmf$. Recall the Massey product for $c_6\alpha_1$ we found in Corollary \ref{cor: massey prod c6alpha1}. Since $\alpha_2$ projects to 0 on the $E_3$-page, we have that the Massey product projects to 0 at $E_3$. One also checks that the indeterminacy for this Massey product on $E_3$ is 0. It is also the case that there is no room for crossing differentials in this range. Thus Moss' Convergence Theorem (\cite{Moss_1970}, \cite[Theorem 3.1.1]{StableStems}) implies that $c_6\alpha_1$ must project to 0 in $E_\infty$. This implies that $c_6\alpha_1$ must be killed by a $d_2$-differential. The only possibility is the claimed differential.
\end{proof}

One might be tempted to conclude from this that there is a length 2 differential from $b_4v_2$ to $v_2\alpha_2$. However, one must be cautious. Even though $b_4v_2$ was a product in the $E_1$-term of the algebraic spectral sequence of the last section, it is no longer decomposable (as $v_2$ supported an algebraic differential). In fact, this differential does not occur. As explained in subsection \ref{subsection: comparison}, the classes $b_4$ and $v_2$ correspond to Hill's classes $\widetilde{c_4}$ and $\widetilde{c_4}^2$ respectively. Also, $b_4v_2$ corresponds to $\Delta$, the modular discriminant. In any of the computations for $\pi_*\tmf$, there is a differential $d_?(\Delta) = \alpha_1\beta^2$. This suggests that $b_4v_2$ ought to support a length 3 differential to $\alpha_1\beta^2$. On the other hand, $\pi_{23}\tmf=0$, and on $E_2(\tmf)$, there are the nonzero classes $v_2\alpha_2, \alpha_1\beta^2$, and $b_4c_6\alpha_1$. Also, Proposition \ref{prop: d2b4} implies that $d_2(c_6b_4\alpha_1) = c_6\alpha_1\alpha_2$, taking care of the class $c_6b_4\alpha_1$. This suggests that $v_2\alpha_2$ will support a differential. 

\begin{prop}
	In $\pi_*\tmf$, one has that $c_6\beta=0$. Consequently, there is an Adams $d_2$-differential 
	\begin{equation}\label{eq:d2v2alpha2}
	d_2(v_2\alpha_2)\,\dot{=}\, c_6\beta.
	\end{equation}
\end{prop}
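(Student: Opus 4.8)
The plan is to prove the homotopy-level vanishing $c_6\beta = 0$ first, and then deduce the differential from it by a multiplicativity-plus-chart argument. For the first step, I would recall from the second proof of Proposition \ref{prop: d2b4} that $\beta$ lies in the Hurewicz image and that $v_0\beta = 0$ in $\pi_*\tmf$. Since $v_0$ detects multiplication by $3$, this says exactly that the homotopy class detected by $\beta$ is $3$-torsion. Multiplying by the torsion-free class $c_6$ detected by $v_1^3$ (Proposition \ref{prop: v_1^3 detects c6}), the product $c_6\beta$ is again $3$-torsion, and this conclusion is insensitive to the choice of lift of $c_6$ precisely because $\beta$ itself is $3$-torsion.

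The key input is then the periodicity theorem. The class $c_6\beta$ lives in stem $12 + 10 = 22$, and $22$ is not congruent modulo $72$ to any of the torsion stems $3, 10, 13, 20, 27, 30, 37, 40$. Hence $\pi_{22}\tmf$ is torsion-free, and a $3$-torsion element of a torsion-free group must vanish. This gives $c_6\beta = 0$ in $\pi_*\tmf$.

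For the second step, I would argue that $c_6\beta = v_1^3\beta$ is a product of the permanent cycles $c_6$ and $\beta$, hence a permanent cycle, and it is nonzero on $E_2$. By multiplicativity of the Adams spectral sequence, the class $c_6\beta$ on $E_\infty$ is the image, in the associated graded, of the homotopy product of the elements detected by $c_6$ and by $\beta$; since that homotopy product is $0$, the class $c_6\beta$ must vanish on $E_\infty$, i.e. it is hit by an incoming differential. To pin down that differential I would read off the chart in stem $23$: in $(t-s,s)$-coordinates $c_6\beta$ sits at $(22,5)$, so a $d_r$ hitting it must originate at $(23,5-r)$. The only classes in stem $23$ are $v_2\alpha_2$ at $(23,3)$, $b_4c_6\alpha_1$ at $(23,4)$, and $\alpha_1\beta^2$ at $(23,5)$; in particular there is nothing at filtration $\leq 2$. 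Thus the unique possibility is a $d_2$ originating at $(23,3)$, namely $d_2(v_2\alpha_2)\,\dot{=}\, c_6\beta$, and since $c_6\beta$ is forced to die, this differential is nonzero.

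The main obstacle, really the only conceptual point, is the homotopy-theoretic input that $\beta$ is $3$-torsion; everything downstream is degree bookkeeping. This rests on the previously established facts that $\beta$ is in the Hurewicz image and that $v_0\beta = 0$ in $\pi_*\tmf$, combined with the periodicity/torsion-stem theorem to rule out torsion in stem $22$. Once $c_6\beta = 0$ is known in homotopy, the identification of the differential is forced by the sparseness of the $E_2$-chart in stems $22$ and $23$.
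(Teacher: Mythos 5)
Your proof is correct, and it takes a genuinely different route from the paper, which gives two proofs that both stay inside the spectral sequence. The paper's first proof shows $c_6\beta = 0$ on the $E_3$-page by Massey-product juggling: the earlier differential $d_2(v_0v_2)\,\dot{=}\,c_6\alpha_1$ makes $\langle c_6,\alpha_1,\alpha_1\rangle$ defined and containing $0$ on $E_3$, and juggling gives $c_6\beta = \langle c_6,\alpha_1,\alpha_1\rangle\alpha_1 = 0$ there; the differential is then forced by the same stem-$23$ sparseness you invoke. The paper's second proof propagates $d_2(b_4)\,\dot{=}\,\alpha_2$ to $d_2(b_4c_6\alpha_1) = c_6\alpha_1\alpha_2 = v_0c_6\beta$ and concludes using the hidden extension $v_0\cdot(v_2\alpha_2)\,\dot{=}\,b_4c_6\alpha_1$ of Corollary \ref{cor: hidden extn v0 v_1alpha2 = b4c6alpha1}. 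You instead prove the homotopy statement first: $\beta$ is in the Hurewicz image with $v_0\beta = 0$ in $\pi_*\tmf$ (both from the second proof of Proposition \ref{prop: d2b4}), so $\beta$ and hence $c_6\beta$ are $3$-torsion; since $22\not\equiv 3,10,13,20,27,30,37,40 \pmod{72}$, the torsion-concentration theorem quoted at the start of this section makes $\pi_{22}\tmf$ torsion-free, so $c_6\beta = 0$ in homotopy; multiplicativity of the associated graded then forces $c_6\beta = 0$ on $E_\infty$, and since it is a permanent cycle nonzero on $E_2$, your chart analysis (which matches the paper's own enumeration of stem $23$: only $v_2\alpha_2$, $b_4c_6\alpha_1$, $\alpha_1\beta^2$, in filtrations $3,4,5$) pins down the $d_2$. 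All of your inputs are available at this point in the paper, so the argument stands as written. What each approach buys: yours proves the proposition in its stated logical order (homotopy vanishing first, differential as consequence) and is quite elementary, but it imports a priori knowledge of $\pi_*\tmf$ through the torsion theorem --- the style of the paper's ``Proof 1''s for other differentials ($\pi_7 = 0$, $\pi_{15}=0$, $\pi_{39}=0$) --- whereas for this particular proposition both of the paper's arguments run only on previously established differentials, Massey products, and hidden extensions, in keeping with its stated preference for first-principles derivations within the spectral sequence.
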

We give two proofs. 
\begin{proof}[Proof 1]
	By the previous proposition, we can form the Massey product $\langle c_6, \alpha_1, \alpha_1\rangle_{E_3}$ on the $E_3$-page. By the juggling lemma, \cite[Appendix 1]{greenbook}, we have that 
	\[
	c_6\beta = c_6\langle \alpha_1, \alpha_1,\alpha_1\rangle = \langle c_6, \alpha_1,\alpha_1\rangle \alpha_1.
	\]
	From the previous proposition, we infer that the Massey product $\langle c_6, \alpha_1, \alpha_1\rangle$ contains 0. It is also easy to see that this Massey product has zero indeterminacy. Thus $c_6\beta=0$ in $E_3(\tmf)$. Thus $c_6\beta$ must be the target of a $d_2$-differential. The only possible source is $v_2\alpha_2$. 
\end{proof}

\begin{proof}[Proof 2]
	From Proposition \ref{prop: d2b4}, we deduce that 
	\[
	d_2(b_4c_6\alpha_1) = c_6\alpha_1\alpha_2 = v_0c_6\beta.
	\]
	The hidden extension \ref{cor: hidden extn v0 v_1alpha2 = b4c6alpha1} then implies the stated differential.
\end{proof}

The next monomials we need to consider are $b_4v_2, v_0v_2^2$, and $v_2^2\alpha_2$, in that order. By inspection of the chart, each of these classes have only one possible target on the $E_2$-page. However, one finds from the previous propositions that each of these potential targets actually supports a differential. Thus $b_4v_2, v_0v_2^2$, and $v_2^2\alpha_2$ are $d_2$-cycles. Thus we move on to the  monomial $b_4v_2^2$.

\begin{prop}
	There is a $d_2$-differential 
	\begin{equation}\label{eq: d2b4v2^2}
	d_2(b_4v_2^2)\,\dot{=}\, v_2^2\alpha_2.
	\end{equation}
	as well as the $d_2$-differential 
	\begin{equation}\label{eq:d2v0b4v2^2}
	d_2(v_0b_4v_2^2)\, \dot{=}\, v_2c_6b_4\alpha_1 
	\end{equation}
\end{prop}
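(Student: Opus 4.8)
The plan is to establish the first differential \eqref{eq: d2b4v2^2} directly and then to deduce the second one \eqref{eq:d2v0b4v2^2} formally. Morally, \eqref{eq: d2b4v2^2} is just $d_2(b_4)\cdot v_2^2\,\dot{=}\,\alpha_2 v_2^2$ coming from Proposition~\ref{prop: d2b4}; however, $v_2^2$ is not a class on the Adams $E_2$-term (it supported an algebraic $d_1$-differential, so that only $v_2^2b_4$ survives among these), and hence $b_4v_2^2$ is indecomposable and the Leibniz rule cannot be used. I would therefore argue directly from $\pi_*\tmf$, exactly in the spirit of the earlier $d_2$-differentials in this section.

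First I would pin down the relevant homotopy group. By Corollary~\ref{cor: rational homotopy of tmf} the rational homotopy $\pi_*\tmf_\Q\cong \Q[c_4,c_6]$ is concentrated in even degrees, so $\pi_{39}\tmf_\Q=0$; since $39$ is not congruent to any of $3,10,13,20,27,30,37,40$ modulo $72$, the torsion theorem of Henriques shows that $\pi_{39}\tmf$ is torsion-free. A torsion-free, rationally trivial, finitely generated $\Z_3$-module is zero, so $\pi_{39}\tmf=0$, and every class in stem $39$ of the Adams $E_2$-term must die. Now the class $v_2^2\alpha_2$ lies in stem $39$ and filtration $4$, and it was shown just above to be a $d_2$-cycle; since it cannot survive to $E_\infty$, it must be the target of a differential. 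By inspection of the charts (Figures~\ref{fig: Adams E_2 term 0-40} and \ref{fig: Adams E_2 term 40-80}) the only class in stem $40$ capable of mapping onto it under a $d_2$ is $b_4v_2^2$, which sits in filtration $2$, and there is nothing in the relevant positions that could support or receive a longer differential involving $v_2^2\alpha_2$. As $v_2^2\alpha_2$ is a $d_2$-cycle it cannot die by supporting a differential, so it must be hit, giving \eqref{eq: d2b4v2^2}.

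The second differential then follows by linearity together with the algebraic hidden extension. Since $v_0$ is a permanent cycle, $d_2$ is $v_0$-linear, so
\[
d_2(v_0b_4v_2^2)=v_0\cdot d_2(b_4v_2^2)\,\dot{=}\,v_0\cdot(v_2^2\alpha_2).
\]
By Proposition~\ref{prop: hidden extn v0(v2^2alpha2) = v2b4c6alpha1} we have $v_0\cdot(v_2^2\alpha_2)\,\dot{=}\,v_2c_6b_4\alpha_1$ on the Adams $E_2$-term, which yields \eqref{eq:d2v0b4v2^2}. This is consistent with the above, since $v_2c_6b_4\alpha_1$ also lies in stem $39$ (filtration $5$) and so must die as well.

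The main obstacle is the bookkeeping in the second paragraph: one must verify from the charts that $v_2^2\alpha_2$ is the unique class in its $(t-s,s)$-position and that $b_4v_2^2$ is the only class that can kill it, ruling out the a priori possibility that $v_2^2\alpha_2$ is instead annihilated by a longer differential originating elsewhere. Because $v_2^2\alpha_2$ is already known to be a $d_2$-cycle and $\pi_{39}\tmf=0$ forces it to die, this amounts to a finite check in the relevant range; once it is carried out, both differentials are forced.
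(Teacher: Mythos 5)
Your proposal is correct and follows essentially the same route as the paper's first proof: $\pi_{39}\tmf=0$ forces the $d_2$-cycle $v_2^2\alpha_2$ to be hit, inspection of the charts shows $b_4v_2^2$ is the only possible source, and the second differential follows from $v_0$-linearity together with the hidden extension of Proposition \ref{prop: hidden extn v0(v2^2alpha2) = v2b4c6alpha1} (your derivation of $\pi_{39}\tmf=0$ from Corollary \ref{cor: rational homotopy of tmf} and the torsion theorem merely makes the paper's citation self-contained). For reference, the paper also records a second proof that avoids quoting $\pi_*\tmf$ altogether: apply multiplicativity to the established differential $d_2(v_0v_2)\,\dot{=}\,c_6\alpha_1$ and the $d_2$-cycle $v_2b_4$ to obtain $d_2\bigl((v_0v_2)\cdot v_2b_4\bigr)\,\dot{=}\,v_2b_4c_6\alpha_1$, and then invoke the relation $(v_0v_2)\cdot v_2b_4=v_0(b_4v_2^2)$ and the same hidden extension.
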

\begin{proof}[Proof 1]
	It is known that $\pi_{39}\tmf$ is zero (cf. \cite{Bauer_2008, supplementary}), but on the $E_2$-term, there are the nonzero classes $v_2^2\alpha_2$ and $v_2c_6b_4\alpha_1$ which are not killed by previously established $d_2$-differentials. The only way for $v_2^2\alpha_2$ to be killed is by a $d_2$-differential given by the claimed differential. The hidden $v_0$-extension established in Proposition \ref{prop: hidden extn v0(v2^2alpha2) = v2b4c6alpha1} gives us the second differential.
\end{proof}

\begin{proof}[Proof 2]
	We have already established the differential $d_2(v_0v_2) = c_6\alpha_1$. Since $v_2b_4$ is a $d_2$-cycle, we have that 
	\[
	d_2((v_0v_2)v_2b_4) = v_2b_4c_6\alpha_1. 
	\]
	However, in the algebraic spectral sequence, we had the relation 
	\[
	(v_0v_2)v_2b_4 = v_0(b_4v_2^2).
	\]
	The multiplicativity of the spectral sequence and the hidden extension in Proposition \ref{prop: hidden extn v0(v2^2alpha2) = v2b4c6alpha1} implies the differential $d_2(b_4v_2^2) = v_2^2\alpha_2$.
\end{proof}

We can draw from this differential another $d_2$-differential. 

\begin{cor}
	There is the following $d_2$-differential
	\begin{equation}\label{eq: d2v2^2b4alpha2}
	d_2(v_2^2b_4\alpha_2)\, \dot{=}\, v_2c_6b_4\beta.
	\end{equation}
\end{cor}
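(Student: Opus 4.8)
The goal is to deduce the $d_2$-differential
\[
d_2(v_2^2b_4\alpha_2)\,\dot{=}\, v_2c_6b_4\beta
\]
from the differential $d_2(b_4v_2^2)\,\dot{=}\, v_2^2\alpha_2$ just established.

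The plan is to exploit the multiplicativity of the Adams spectral sequence together with the Leibniz rule, using the fact recorded earlier that $\alpha_2$ is a permanent cycle. First I would observe that $v_2^2b_4\alpha_2 = (b_4v_2^2)\cdot\alpha_2$, expressing the source as a product of a class supporting a known $d_2$-differential with a permanent cycle. Applying the Leibniz rule and using that $d_2(\alpha_2)=0$ gives
\[
d_2(v_2^2b_4\alpha_2) = d_2\big((b_4v_2^2)\cdot\alpha_2\big)\,\dot{=}\, d_2(b_4v_2^2)\cdot\alpha_2\,\dot{=}\, v_2^2\alpha_2\cdot\alpha_2 = v_2^2\alpha_2^2.
\]

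The crux of the argument is then to identify the product $v_2^2\alpha_2^2$ with $v_2c_6b_4\beta$ in the relevant page of the spectral sequence. From Proposition \ref{prop: Ext of A(1)}, the relation $\alpha_1\alpha_2 \,\dot{=}\, v_0\beta$ holds in $\Ext_{A(1)_*}(\F_3)$, and the hidden extension of Proposition \ref{prop: hidden extn v0(v2^2alpha2) = v2b4c6alpha1} relates $v_0\cdot(v_2^2\alpha_2)$ to $v_2b_4c_6\alpha_1$. The key step is to combine these: writing $\alpha_2^2$ via the Massey-product structure, or more directly multiplying the hidden extension $v_0\cdot(v_2^2\alpha_2)\,\dot{=}\, v_2b_4c_6\alpha_1$ by $\alpha_2$ and invoking $\alpha_1\alpha_2\,\dot{=}\, v_0\beta$, one finds
\[
v_0\cdot(v_2^2\alpha_2^2)\,\dot{=}\, v_2b_4c_6\alpha_1\alpha_2 \,\dot{=}\, v_0\cdot v_2c_6b_4\beta.
\]
Since $v_0$ is not a zero divisor in the relevant module structure on the $E_2$-term (this is visible from the explicit description of the patterns in \textsection\ref{subsec: algebraic Einfty term}), one may cancel $v_0$ to conclude $v_2^2\alpha_2^2 \,\dot{=}\, v_2c_6b_4\beta$, which identifies the target of the differential as claimed.

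The main obstacle I anticipate is the justification of the cancellation of $v_0$ and the precise bookkeeping of the multiplicative relations in the correct filtration and bidegree, since some of these identities (such as the expression for $v_2b_4c_6\alpha_1$) are hidden extensions rather than honest equalities on a single page. Care is needed to ensure that the product $v_2^2\alpha_2^2$ lives in a bidegree where the only nonzero class is $v_2c_6b_4\beta$, so that the conclusion is forced; a consultation of the charts in Figures \ref{fig: Adams E_2 term 0-40} and \ref{fig: Adams E_2 term 40-80} should confirm that the $d_2$-target lands in a stem with a unique candidate, as was the strategy throughout this subsection.
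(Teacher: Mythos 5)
Your opening move is fine, and arguably cleaner than the paper's: writing $v_2^2b_4\alpha_2 = (b_4v_2^2)\cdot\alpha_2$ and applying the Leibniz rule with the permanent cycle $\alpha_2$ correctly reduces the corollary to identifying the product $(v_2^2\alpha_2)\cdot\alpha_2$ in the Adams $E_2$-term. But that identification is where your proof breaks, and note that it is the entire content of the corollary: on the visible module structure this product is zero, since $v_2^2\alpha_2$ generates the summand $\Ext_{A(1)_*}(\F_3)/(\alpha_2,v_0)\cdot\{v_2^2\alpha_2\}$ of Pattern 3, so its nonvanishing is itself a hidden extension of the algebraic spectral sequence. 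Your argument for it --- multiply by $v_0$ and cancel --- is invalid, because your claim that the patterns of \textsection\ref{subsec: algebraic Einfty term} show $v_0$ is not a zero divisor is exactly backwards. The intended target $v_2c_6b_4\beta$ lies in Pattern 2 on $v_2b_4$, i.e.\ in $\Ext_{A(1)_*}(\F_3)/(\alpha_2)\cdot\{v_2b_4\}$, and $v_0\cdot c_6\beta = c_6\alpha_1\alpha_2\equiv 0 \bmod (\alpha_2)$; since classes divisible by $b_4$ sit in the top algebraic filtration, no hidden extension can rescue this, so $v_0\cdot(v_2c_6b_4\beta)=0$ on the nose in $E_2(\tmf)$. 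For the same reason your left-hand side $v_0\cdot\bigl((v_2^2\alpha_2)\cdot\alpha_2\bigr)\,\dot{=}\,(v_2b_4c_6\alpha_1)\cdot\alpha_2 = 0$. Thus your displayed equation reads $0\,\dot{=}\,0$: it is vacuous, and in particular cannot distinguish the desired conclusion $(v_2^2\alpha_2)\cdot\alpha_2\,\dot{=}\,v_2c_6b_4\beta$ from $(v_2^2\alpha_2)\cdot\alpha_2 = 0$, which would instead give $d_2(v_2^2b_4\alpha_2)=0$.

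The paper's proof avoids precisely this trap by cancelling $\alpha_1$ rather than $v_0$. Multiplying the differential \eqref{eq: d2b4v2^2} by $\beta$ gives $d_2(v_2^2b_4\beta)\,\dot{=}\,v_2^2\alpha_2\beta$, which is honestly nonzero; multiplying by $v_0$ and invoking the hidden extension of Proposition \ref{prop: hidden extn v0(v2^2alpha2) = v2b4c6alpha1} (with $k=1$) gives $d_2(v_0v_2^2b_4\beta)\,\dot{=}\,v_2c_6b_4\beta\alpha_1\neq 0$. Since $v_0\beta = \alpha_1\alpha_2$, the source is $(v_2^2b_4\alpha_2)\cdot\alpha_1$, and $\alpha_1$-linearity of $d_2$ then forces $d_2(v_2^2b_4\alpha_2)\cdot\alpha_1\neq 0$, hence the claimed differential. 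The crucial asymmetry is that multiplication by $\alpha_1$ detects the target, $\alpha_1\cdot(v_2c_6b_4\beta) = c_6\beta\alpha_1\cdot(v_2b_4)\neq 0$ in $\Ext_{A(1)_*}(\F_3)/(\alpha_2)\cdot\{v_2b_4\}$, whereas multiplication by $v_0$ annihilates it. If you want to keep your $\alpha_2$-Leibniz framing, you must establish the hidden extension $(v_2^2\alpha_2)\cdot\alpha_2\,\dot{=}\,v_2c_6b_4\beta$ by a genuinely different argument (for instance a Massey-product juggling argument in the style of Proposition \ref{prop: hidden extn v0(v2^2alpha2) = v2b4c6alpha1}); it does not follow from $v_0$-multiplication.
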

\begin{proof}
	From the previous proposition we deduce the differential 
	\[
	d_2(v_2^2b_4\beta) \,\dot{=}\, v_2^2\alpha_2\beta.
	\]
	However, we have from Proposition \ref{prop: hidden extn v0(v2^2alpha2) = v2b4c6alpha1} that 
	\[
	v_0(v_2^2\alpha_2\beta)\,\dot{=}\, c_6b_4v_2\beta\alpha_1.
	\]
	This implies the differential
	\[
	d_2(v_2^2v_0b_4\beta)\,\dot{=}\, c_6b_4v_2\beta\alpha_1.
	\]
	However, since $v_0\beta = \alpha_1\alpha_2$, we also have 
	\[
	v_2^2v_0b_4\beta = (v_2^2b_4\alpha_2)\cdot \alpha_1.
	\]
	Since $\alpha_1$ is a permanent cycle, multiplicativity of the spectral sequence implies the claimed differential.
\end{proof}

This completes the determination of the Adams $d_2$-differential. Below, in Figure \ref{fig: Adams E_2 term 0-40} and  \ref{fig: Adams E_2 term 40-80}, we depict that Adams $E_2$-term along with the $d_2$-differentials. The reader will notice that we have used several different colors in the chart. Here is a key to the use of these colors.

\begin{center}
\begin{tabular}[c]{c|c|c}
	Color & Pattern & generator\\ \hline 
	black & pattern 1& 1\\ \hline 
	\textcolor{blue}{blue} & pattern 2 & $b_4$ \\ \hline
	\textcolor{limegreen}{lime green} & pattern 3 & $v_2$\\ \hline
	\textcolor{darkmagenta}{dark magenta} & pattern 2 & $v_2b_4$ \\ \hline
	\textcolor{lavenderrose}{lavender rose} & pattern 3& $v_2^2$\\ \hline
	\textcolor{teal}{teal} & pattern 1 & $v_2^2b_4$\\ \hline
	\textcolor{seagreen}{sea green} & pattern 1 & $v_2^3$ \\ \hline
	\textcolor{maroon}{maroon} & pattern 2 & $v_2^3b_4$ \\ \hline
	\textcolor{darkviolet}{dark violet} & pattern 3 & $v_2^4$\\ \hline
	\textcolor{twelve}{blue gray} & pattern 2 & $v_2^4b_4$
\end{tabular}.
\end{center}

Before proceeding onto computing the $d_3$-differentials, we will give a description of the Adams $E_3$-term based on the differentials we just found. 

\begin{figure}
\centering
\begin{subfigure}{0.49\textwidth}
	\includegraphics[angle=90,height=\textheight]{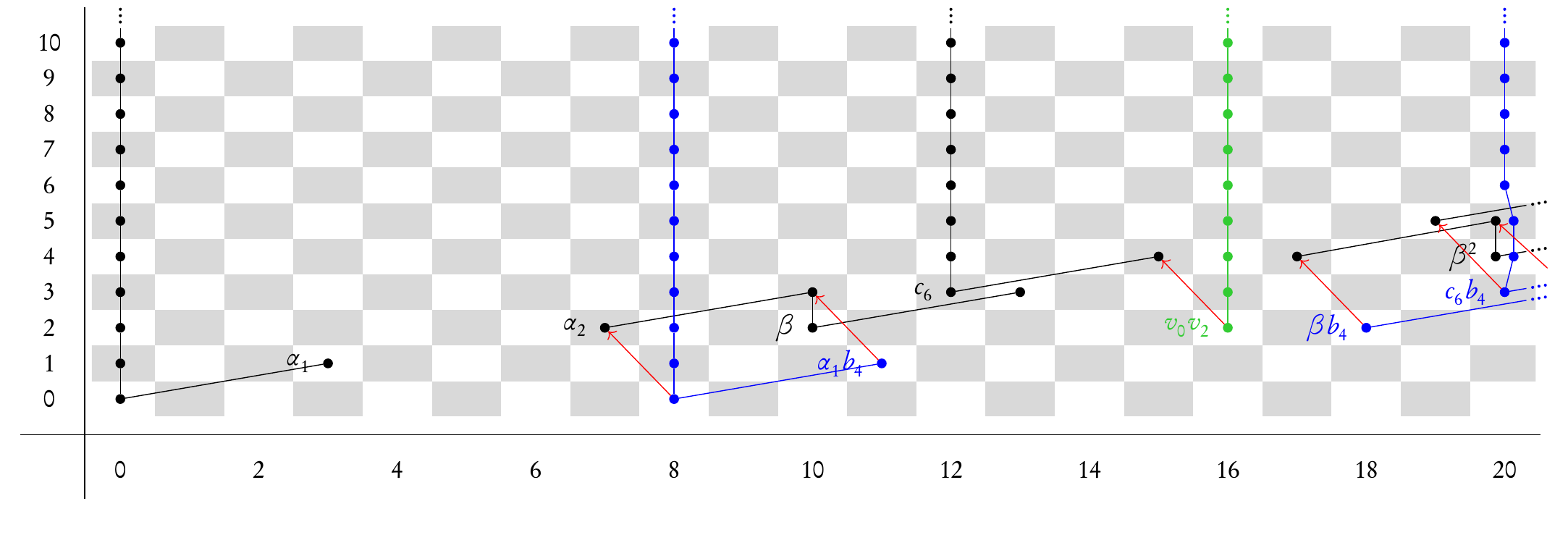}
	\caption{}
\end{subfigure}
\begin{subfigure}{0.49\textwidth}
	\includegraphics[angle=90,height=\textheight]{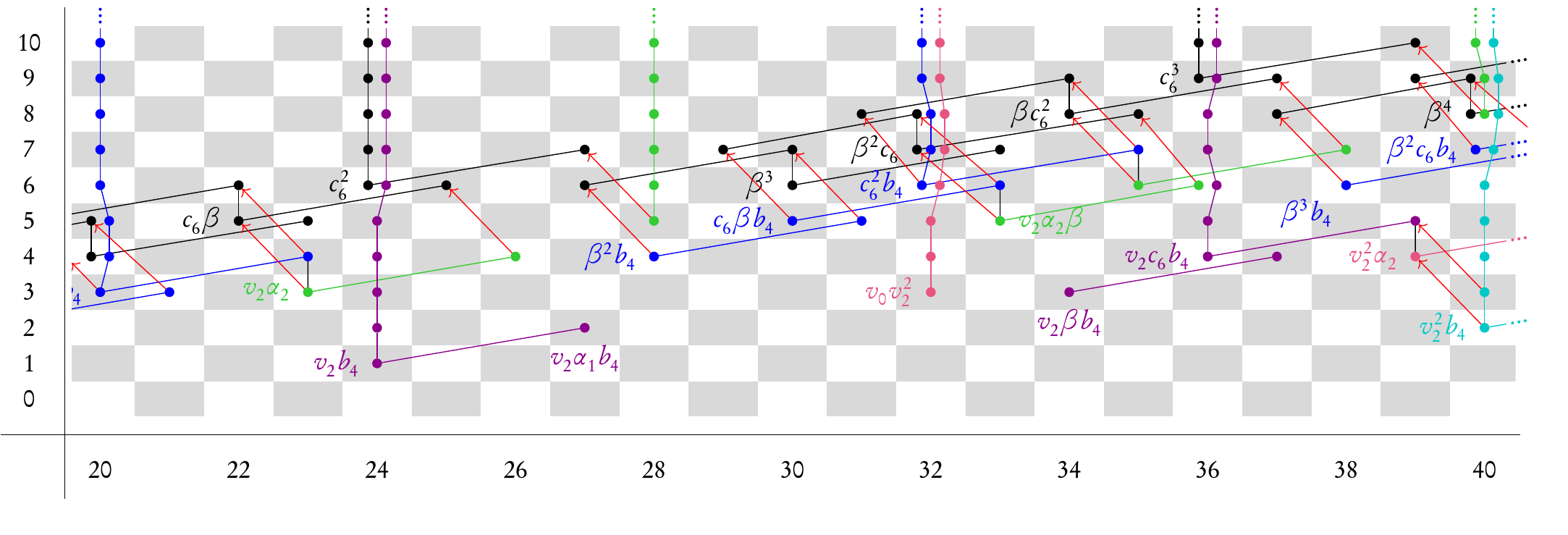}
	\caption{}
\end{subfigure}
\caption{Adams $E_2$-page in stems 0-40 with $d_2$-differentials} \label{fig: Adams E_2 term 0-40}
\end{figure}

\begin{figure}
\centering
\begin{subfigure}{0.49\textwidth}
	\includegraphics[angle=90,height=\textheight]{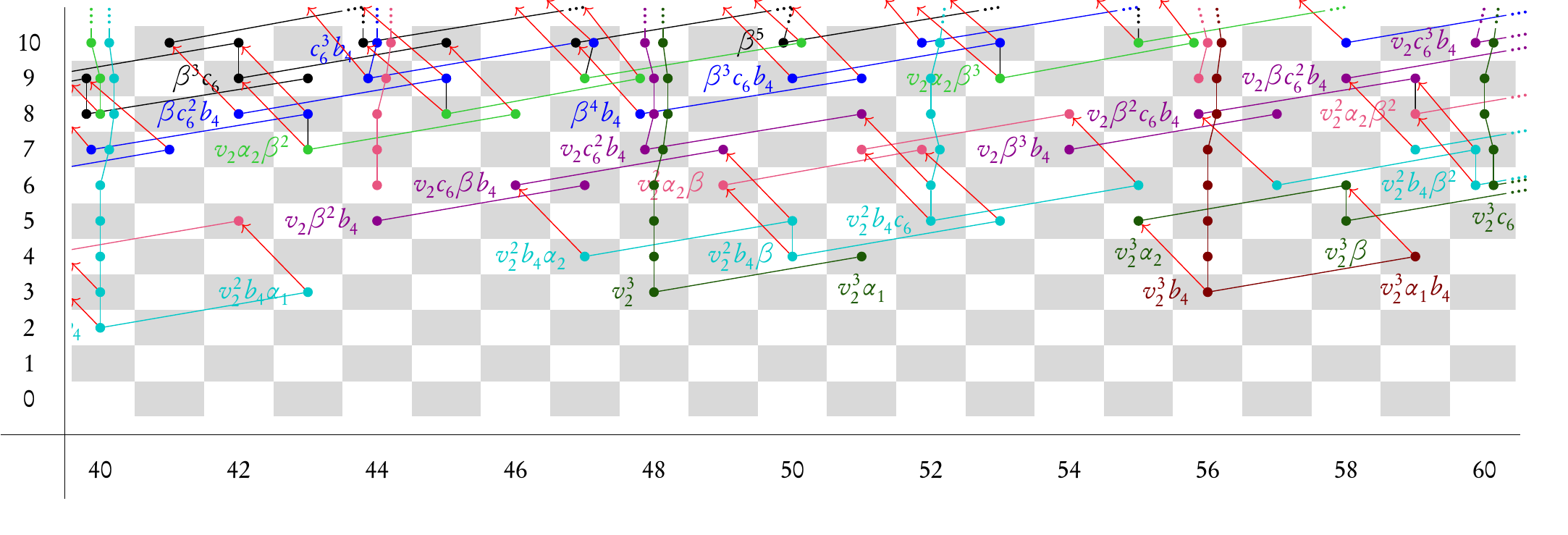}
	\caption{}
\end{subfigure}
\begin{subfigure}{0.49\textwidth}
	\includegraphics[angle=90,height=\textheight]{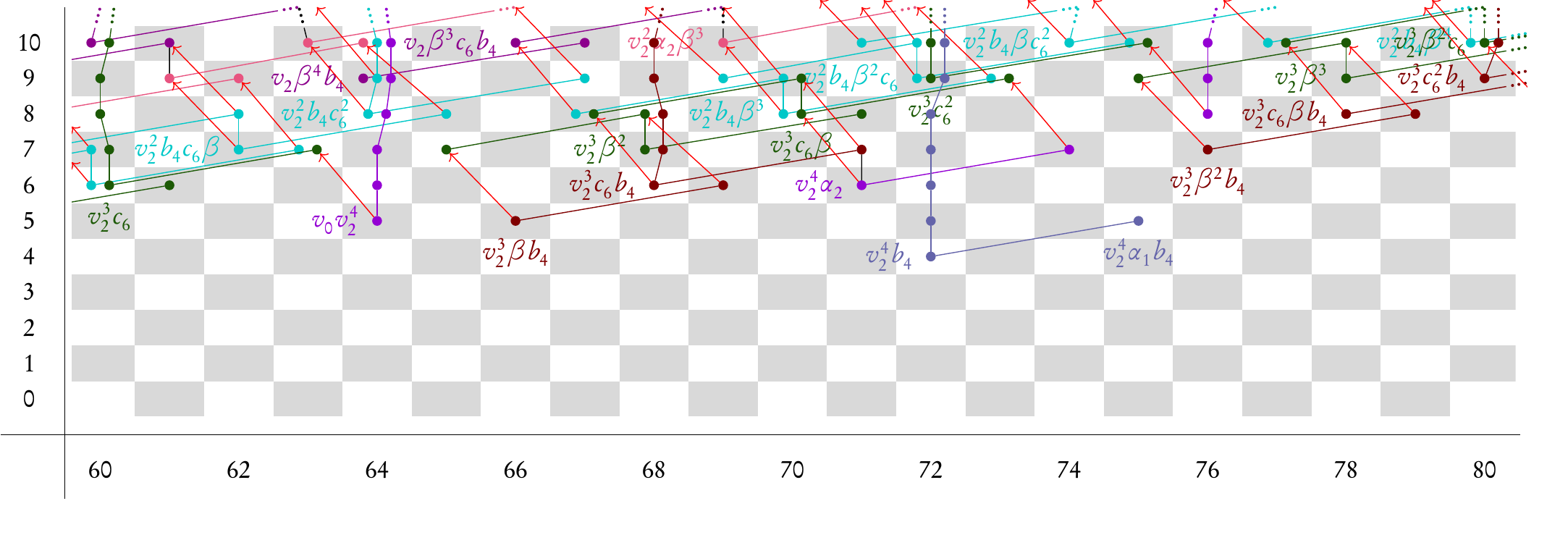}
	\caption{}
\end{subfigure}
\caption{Adams $E_2$-page in stems 40-80 with $d_2$-differentials}\label{fig: Adams E_2 term 40-80}
\end{figure}

\subsection{Determining the Adams $E_3$-term}
We now set out to determine the patterns that make up the Adams $E_3$-term for $\tmf$. To get things going, first note that the pattern of type 2 on generator $b_4$ and the pattern of type 3 on generator $v_2$ support differentials into the pattern $\Ext_{A(1)_*}(\F_3)\cdot\{1\}$ (see Remark \ref{rmk: pattern terminology} for an explanation of this terminology). More specifically, the differential \eqref{eq: d2b4} propagates to give the following $d_2$-differentials for $k, j, \ell\in \mathbb{N}$ and $\varepsilon_1\in \{0,1\}$;
\[
d_2(v_0^\ell c_6^j \alpha_1^{\varepsilon_1}\beta^kb_4)\, \dot{=}\,
\begin{cases}
	 c_6^j\alpha_1^{\varepsilon_1}\alpha_2\beta^k & \ell=0\\
	 0 & \ell\neq 0
\end{cases}.
\]
Similarly, the differentials \eqref{eq: d2v0v2} and \eqref{eq:d2v2alpha2} respectively propagate to give the differentials
\[
d_2(c_6^jv_0^\ell\cdot (v_0v_2))\, \dot{=}\, 
\begin{cases}
	c_6^{j+1}\alpha_1 & \ell=0 \\
	0 & \ell\neq 0
\end{cases}
\]
and 
\[
d_2(c_6^j\beta^k\alpha_1^{\varepsilon_1}\cdot(v_2\alpha_2))\,\dot{=}\, c_6^{j+1}\beta^{k+1}\alpha_1^{\varepsilon_1}.
\]
Observe that any monomial in $\Ext_{A(1)_*}(\F_3)$ involving an $\alpha_2$ or a $c_6\alpha_1$ is hit by a differential. So from $\Ext_{A(1)_*}(\F_3)$ we obtain the module 
\[
\Ext_{A(1)_*}(\F_3)/(\alpha_2, c_6\beta, c_6\alpha_1)\cdot\{1\}.
\]
The patterns supported by $b_4$ and $v_2$ do not receive any Adams $d_2$-differentials, so all we must do is determine what remains of these patterns after applying the Adams $d_2$-differentials. It follows from these differentials that what remains of the pattern on $b_4$ is the submodule 
\[
\Ext_{A(1)_*}(\F_3)/(\alpha_1, \alpha_2, \beta)\cdot\{v_0b_4\}
\]
and what remains of the pattern on $v_2$ is 
\[
\Ext_{A(1)_*}(\F_3)/(\alpha_1,\alpha_2,\beta)\cdot\{v_0^2v_2\}.
\]

The next pattern we need to consider is the pattern of type 2 on $b_4v_2$. Since $b_4v_2$ is a $d_2$-cycle, this entire pattern consists of $d_2$-cycles. Because of the hidden $v_0$-extension in Prop \ref{prop: hidden extn v0(v2^2alpha2) = v2b4c6alpha1}, we will consider this pattern in tandem with the half of the pattern of type 3 on $v_2^2$ generated by $v_2^2\alpha_2$; namely the module $\Ext_{A(1)_*}(\F_3)/(\alpha_2, v_0)\cdot \{v_2^2\alpha_2\}$. This half also consists only of $d_2$-cycles. Thus, the combined pattern only receives differentials. It receives its differentials from the free pattern $\Ext_{A(1)_*}(\F_3)\cdot\{v_2^2b_4\}$. The differentials \eqref{eq: d2b4v2^2}, \eqref{eq:d2v0b4v2^2}, and \eqref{eq: d2v2^2b4alpha2} propagate to give the following differentials:
\begin{align*}
d_2(v_0^jc_6^k\beta^\ell\alpha_1^{\epsilon_1}\cdot(v_2^2b_4))\, &\dot{=}\, 
	\begin{cases}
		c_6^k\beta^\ell\alpha_1^{\varepsilon_1}\cdot(v_2^2\alpha_2) & j=0\\
		c_6^{k+1}\beta^\ell\alpha_1\cdot(v_2b_4) & j=1, \varepsilon_1=0\\
		0 & else
	\end{cases}\\
d_2(c_6^k\beta^\ell(b_4v_2^2)\cdot \alpha_2) \, &\dot{=}\, c_6^{k+1}\beta^{\ell+1}(v_2b_4).
\end{align*}
Note that any monomial in the pattern $\Ext_{A(1)_*}(\F_3)/(\alpha_2)\cdot\{b_4v_2\}$ which contains a $c_6\alpha_1$ or $c_6\beta$ is hit by a differential. Also, the piece $\Ext_{A(1)_*}(\F_3)/(v_0, \alpha_2)\cdot \{v_2^2\alpha_2\}$ is annihilated by these differentials. Hence, this pattern yields the following module 
\[
\Ext_{A(1)_*}(\F_3)/(\alpha_2, c_6\beta, c_6\alpha_1)\cdot\{b_4v_2\}. 
\]
It also follows that what remains of the pattern on $b_4v_2^2$ is the submodule 
\[
\Ext_{A(1)_*}/(\alpha_1, \alpha_2, \beta)\cdot\{v_0^2v_2^2b_4\}.
\]
The other half of pattern 3 on $v_2^2$, i.e. $\Ext_{A(1)_*}(\F_3)/(\alpha_1, \alpha_2, \beta)\cdot\{v_0v_2^2\}$, consists entirely of $d_2$-cycles and receives no differentials. Thus this survives in full to the $E_3$-page.

As we have already mentioned, the Adams $E_3$-term for $\tmf$ is periodic on $v_2^3$. Combining all of these observations proves the following identification of the Adams $E_3$-term. 

\begin{prop}\label{prop: Adams E3}
	The Adams $E_3$-term is given as a module over $\Ext_{A(1)_*}(\F_3)$ as the infinite direct sum of the following types of modules,
	\begin{enumerate}
		\item[Pattern 1'] For all $j\geq 0$, we have the modules 
			\[
			\Ext_{A(1)_*}(\F_3)/(\alpha_2, c_6\beta, c_6\alpha_1)\cdot\{v_2^{3j}, v_2^{3j+1}b_4\}
			\]
		\item[Pattern 2'] For all $j\geq 0$, we have the modules 
			\[
			\Ext_{A(1)_*}(\F_3)/(\alpha_1,\alpha_2, \beta)\cdot \{v_0v_2^{3j}b_4, v_0^2v_2^{3j+1},v_0v_2^{3j+2}, v_0^2v_2^{3j+2}b_4\},
			\]
		and the ring structure is inherited from the Adams $E_2$-term. 
	\end{enumerate}
\end{prop}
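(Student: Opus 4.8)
The plan is to realize $E_3$ as the homology of the Adams $E_2$-term with respect to $d_2$, organizing the computation along the direct-sum decomposition of $E_2$ into the patterns of \textsection\ref{subsec: algebraic Einfty term}. Since every $d_2$-differential is linear over $\Ext_{A(1)_*}(\F_3)\otimes P(v_2^3)$, the term $E_3$ is again a module over this ring, so it suffices to compute the homology on the six pattern-generators $v_2^j b_4^\varepsilon$ with $0\le j\le 2$ and $\varepsilon\in\{0,1\}$, and then tensor up with $P(v_2^3)$. This reduction is exactly what is recorded by the factor $v_2^{3j}$ in the statement, so the whole proposition follows once the $j=0$ case is settled.

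First I would sort these six generators into sources, targets, and spectators according to the established $d_2$-differentials. Using the formulas propagated from \eqref{eq: d2b4}, \eqref{eq: d2v0v2} and \eqref{eq:d2v2alpha2}, the patterns on $b_4$ and on $v_2$ both map into the free pattern on $1=v_2^0$; on $1$ the images are exactly the multiples of $\alpha_2$, $c_6\alpha_1$ and $c_6\beta$ (using $v_0\beta=\alpha_1\alpha_2\in(\alpha_2)$), so the cokernel is $\Ext_{A(1)_*}(\F_3)/(\alpha_2,c_6\beta,c_6\alpha_1)\cdot\{1\}$, the $j=0$ instance of Pattern 1'. For the kernels on the source patterns, the relation $v_0\alpha_2=0$ shows that only the $v_0$-multiples of $b_4$ and the $v_0^2$-multiples of $v_2$ survive; the relations $v_0\alpha_1=v_0\alpha_2=0$ and $\alpha_1\alpha_2=v_0\beta$ of Proposition \ref{prop: Ext of A(1)} then collapse these $v_0$-divisible submodules to $\Ext_{A(1)_*}(\F_3)/(\alpha_1,\alpha_2,\beta)$ on the generators $v_0 b_4$ and $v_0^2 v_2$, giving two of the Pattern 2' summands.

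The delicate step is the interlocking triple formed by the patterns on $v_2^2 b_4$, $v_2 b_4$ and $v_2^2$, which must be analyzed simultaneously, and here I would feed in the hidden $v_0$-extension of Proposition \ref{prop: hidden extn v0(v2^2alpha2) = v2b4c6alpha1}. The differentials \eqref{eq: d2b4v2^2}, \eqref{eq:d2v0b4v2^2} and \eqref{eq: d2v2^2b4alpha2} emanating from the free pattern on $v_2^2 b_4$ hit the $v_2^2\alpha_2$-half of the split Pattern 3 on $v_2^2$ (killing it entirely) and, precisely because \eqref{eq:d2v0b4v2^2} was derived from the hidden extension, they also kill the $c_6\alpha_1$- and $c_6\beta$-multiples inside the pattern on $v_2 b_4$. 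The surviving cokernel on $v_2 b_4$ is $\Ext_{A(1)_*}(\F_3)/(\alpha_2,c_6\beta,c_6\alpha_1)\cdot\{v_2 b_4\}$, completing Pattern 1' for $j=0$, while the kernel on the free pattern on $v_2^2 b_4$ is forced up to the $v_0^2$-multiples, producing $\Ext_{A(1)_*}(\F_3)/(\alpha_1,\alpha_2,\beta)\cdot\{v_0^2 v_2^2 b_4\}$. The remaining $v_0 v_2^2$-half of Pattern 3 on $v_2^2$ consists of $d_2$-cycles receiving no differential, so it survives untouched as $\Ext_{A(1)_*}(\F_3)/(\alpha_1,\alpha_2,\beta)\cdot\{v_0 v_2^2\}$. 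Assembling these six local computations, re-introducing the $v_2^3$-periodicity, and noting that the ring structure descends because $d_2$ is a derivation, yields precisely Pattern 1' and Pattern 2'. I expect the main obstacle to be keeping the kernel and cokernel computations honest across the hidden extension, that is, verifying that the $c_6\alpha_1$ and $c_6\beta$ relations on $v_2 b_4$ genuinely arise from the $v_0$-extension and that the residual free module on $v_2^2 b_4$ really begins in $v_0^2$, rather than from some accidental coincidence of degrees.
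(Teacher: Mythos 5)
Your proposal is correct and follows essentially the same argument as the paper: reduce by $\Ext_{A(1)_*}(\F_3)\otimes P(v_2^3)$-linearity to the six pattern generators, compute cokernels on $1$ and $v_2b_4$ and kernels on $b_4$, $v_2$, $v_2^2b_4$ from the propagated $d_2$-differentials, and treat the pattern on $v_2b_4$ in tandem with the $v_2^2\alpha_2$-half of the pattern on $v_2^2$ via the hidden $v_0$-extension of Proposition \ref{prop: hidden extn v0(v2^2alpha2) = v2b4c6alpha1}. The only cosmetic difference is that you make the kernel/cokernel bookkeeping and the role of the relations $v_0\alpha_1=v_0\alpha_2=0$, $\alpha_1\alpha_2=v_0\beta$ slightly more explicit than the paper does.
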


We give the Adams chart for the $E_3$-term below in Figure \ref{fig: Adams E_3 term 0-80}.  We have given two copies of this chart, one inheriting the colors from previous charts, the other where we depict Pattern 1' in black and Pattern 2' in blue.  

\begin{figure}
\centering
\begin{subfigure}{0.49\textwidth}
	\includegraphics[angle=90,height=\textheight]{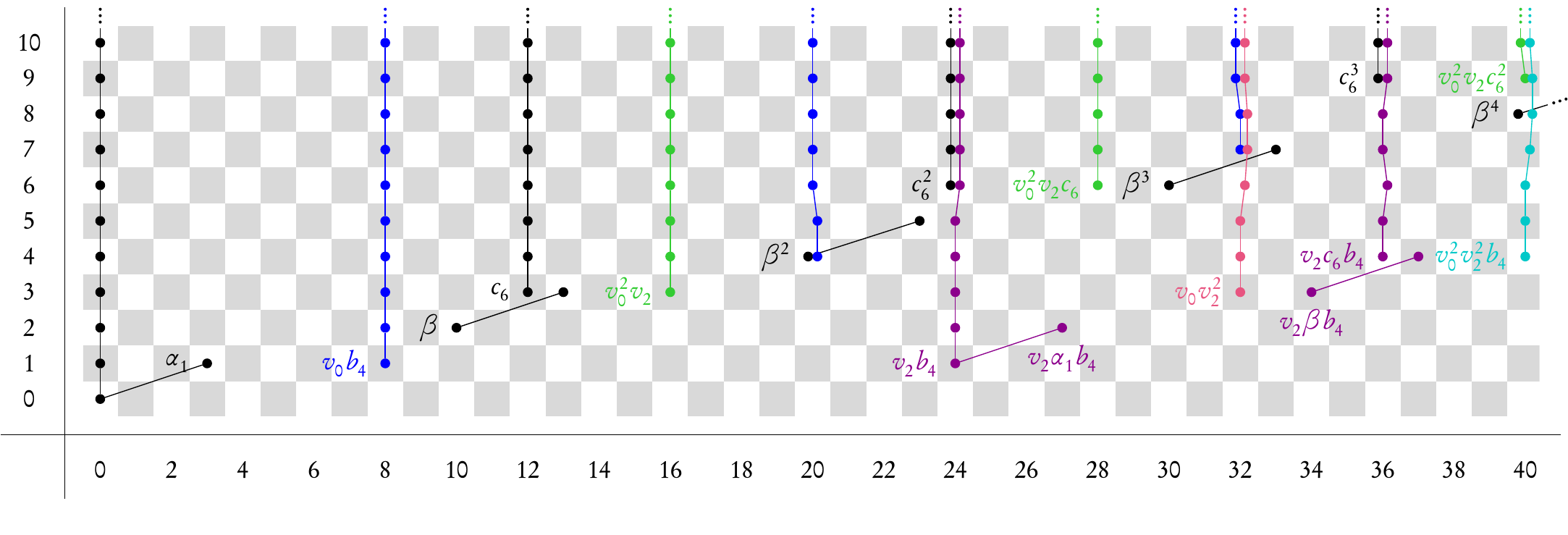}
	\caption{}
\end{subfigure}
\begin{subfigure}{0.49\textwidth}
	\includegraphics[angle=90,height=\textheight]{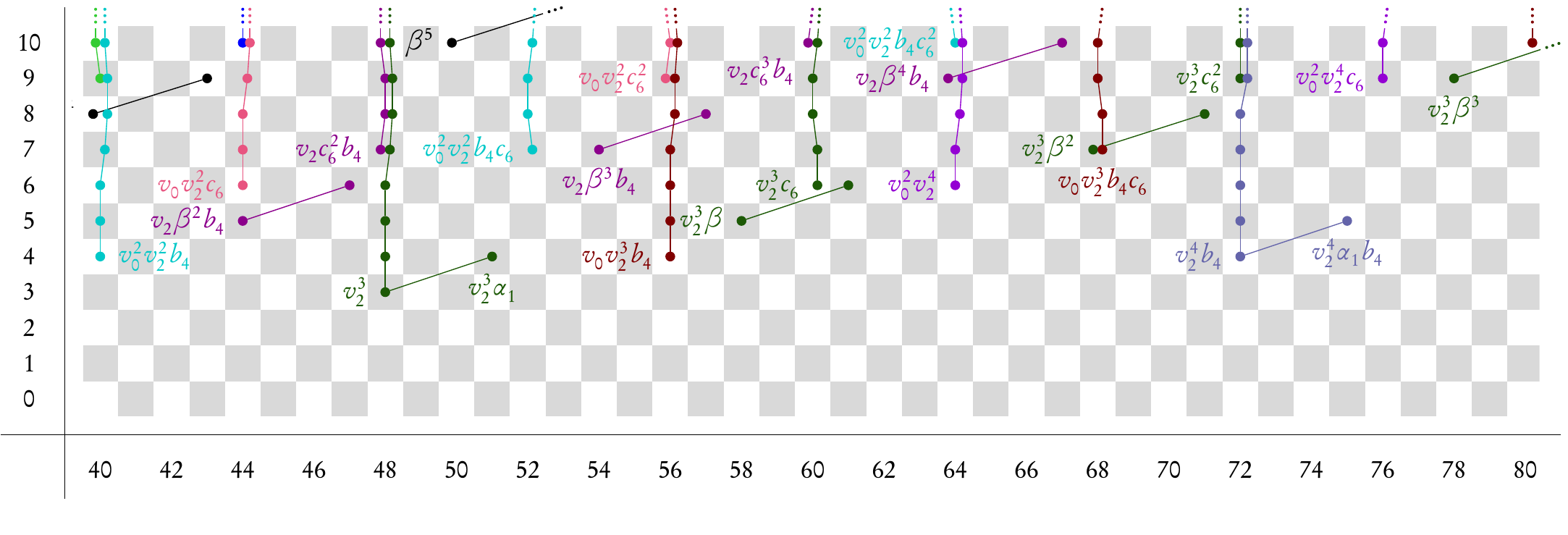}
	\caption{}
\end{subfigure}
\caption{Adams $E_3$-page in stems 0-80}\label{fig: Adams E_3 term 0-80}
\end{figure} 

\begin{figure}
\centering
\begin{subfigure}{0.49\textwidth}
	\includegraphics[angle=90,height=\textheight]{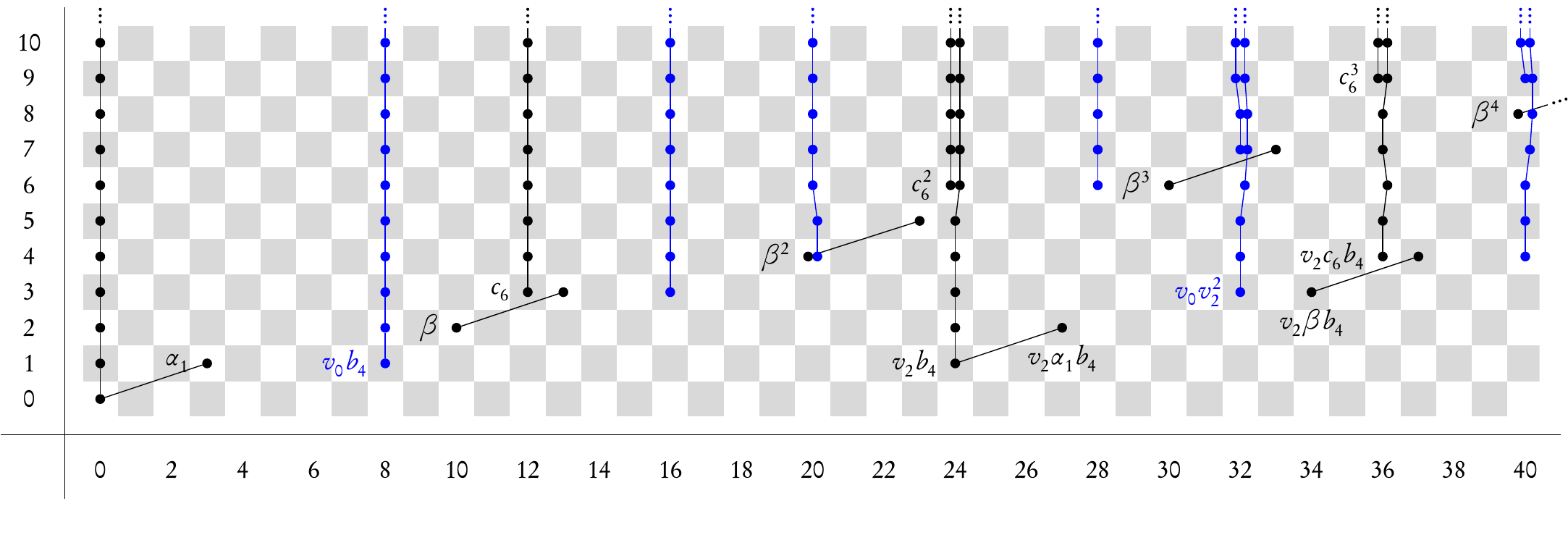}
	\caption{}
\end{subfigure}
\begin{subfigure}{0.49\textwidth}
	\includegraphics[angle=90,height=\textheight]{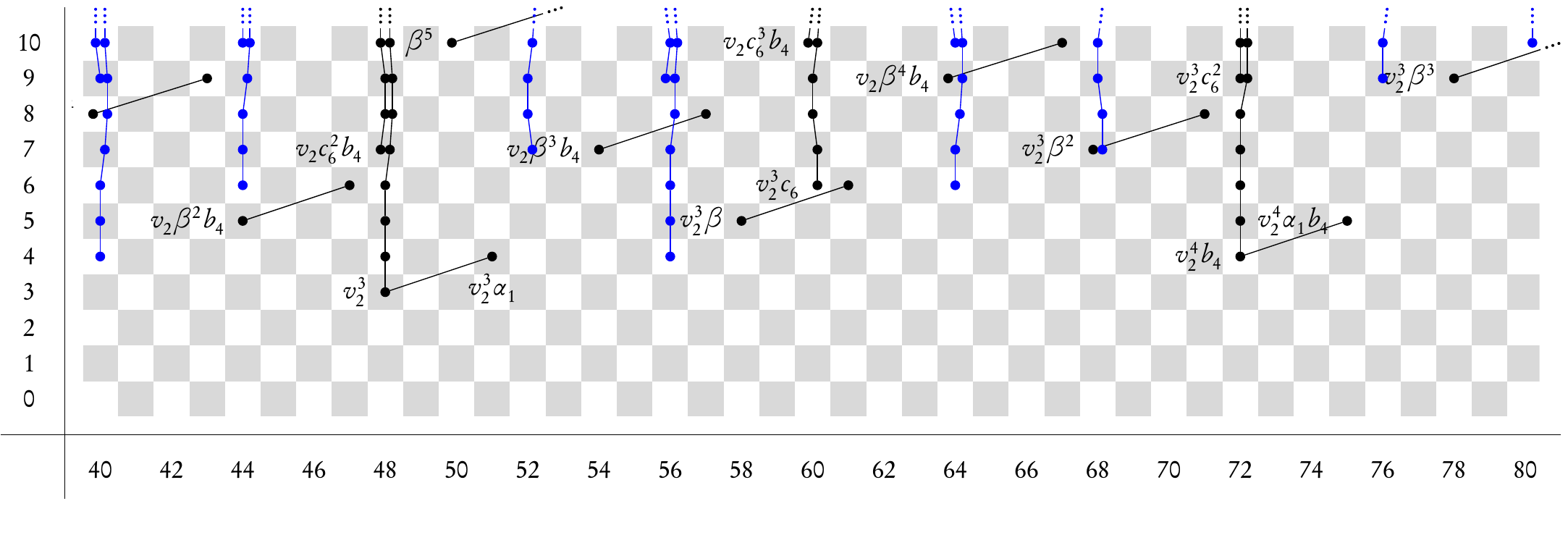}
	\caption{}
\end{subfigure}
\caption{Adams $E_3$-page in stems 0-80 with Patterns 1' and 2' highlighted}\label{fig: Adams E_3 term 0-80 modified}
\end{figure} 

As we will see later, $v_0b_4$ will detect the class $c_4$. Similarly, the class $v_0^2v_2$ will detect $c_4^2$. On the other hand, there are certain important classes in the Adams-Novikov spectral sequence which support differentials. Namely, the class $\Delta$. In the ANSS, $\Delta = v_2^{3/2}$. In the ASS, the class $v_2b_4$ corresponds to $\Delta$ while the class $v_2^3$ corresponds to $\Delta^2$. The class $v_2^4b_4$ corresponds to the class $\Delta^3$, while the class $v_2^9$ corresponds to $\Delta^6$. The reader should note that, at the $E_3$-page, we \emph{do not} have that $(b_4v_2^4)^2 = v_2^9$. In fact, $b_4v_2^4$ is not in the correct filtration for this to happen. This what makes the Adams spectral sequence more difficult than the analogous calculation in \cite{Hill_2007}. However, since $\pi_*\tmf$ is periodic on $\Delta^3$, this does suggest re-expressing the $E_3$-term in the following way. Let $M$ denote the $\Ext_{A(1)_*}(\F_3)$-module
\begin{equation}\label{eq: basic periodicity pattern}	
\begin{split}M:&= \Ext_{A(1)_*}(\F_3)/(\alpha_2, c_6\beta, c_6\alpha_1)\cdot\{v_2^{3j}, v_2^{3j+1}b_4\mid j=0,1,2\}\\ &\oplus \Ext_{A(1)_*}(\F_3)/(\alpha_1, \alpha_2, \beta)\cdot\{v_0v_2^{3j}b_4, v_0^2v_2^{3j+1}, v_0v_2^{3j+2}, v_0^2v_2^{3j+2}b_4\mid j=0,1,2\}.
\end{split}	
\end{equation}

In other words, $M$ is the collection of all the patterns from Proposition \ref{prop: Adams E3} which are generated by the listed monomials in degrees less than $|v_2^9| = 144$. The following now follows from the previous proposition.

\begin{cor}
	There is an isomorphism of $\Ext_{A(1)_*}(\F_3)$-modules 
	\begin{equation}\label{eq: Delta cubed decomposition}
	E_3(\tmf)\cong \bigoplus_{k\geq 0}M\cdot\{v_2^{9k}\}
	\end{equation}
\end{cor}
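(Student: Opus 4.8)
The plan is to obtain \eqref{eq: Delta cubed decomposition} directly from the explicit description of $E_3(\tmf)$ in Proposition \ref{prop: Adams E3} by a reindexing argument: the patterns there are indexed by a single nonnegative integer $j$, and I will organize these indices into blocks of three consecutive values, factoring out the appropriate power of $v_2^9$.

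First I would recall that, by Proposition \ref{prop: Adams E3}, $E_3(\tmf)$ is the direct sum, over all $j\geq 0$, of the Pattern 1' summand $\Ext_{A(1)_*}(\F_3)/(\alpha_2, c_6\beta, c_6\alpha_1)\cdot\{v_2^{3j}, v_2^{3j+1}b_4\}$ together with the Pattern 2' summand $\Ext_{A(1)_*}(\F_3)/(\alpha_1,\alpha_2,\beta)\cdot\{v_0v_2^{3j}b_4, v_0^2v_2^{3j+1}, v_0v_2^{3j+2}, v_0^2v_2^{3j+2}b_4\}$. The key elementary observation is that multiplication by $v_2^3$ carries the two summands indexed by $j$ isomorphically onto the corresponding summands indexed by $j+1$, matching the listed generators bijectively; this is exactly the sense in which the $E_3$-term is periodic on $v_2^3$. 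In particular multiplication by $v_2^9=(v_2^3)^3$ shifts the index by three.

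Next I would write each index uniquely as $j=3k+j_0$ with $k\geq 0$ and $j_0\in\{0,1,2\}$, and note that every generator occurring for index $j$ is $v_2^{9k}$ times the corresponding generator for index $j_0$ (for instance $v_2^{3j}=v_2^{9k}v_2^{3j_0}$ and $v_0^2 v_2^{3j+1}=v_2^{9k}\cdot v_0^2 v_2^{3j_0+1}$). Hence the pair of summands indexed by $j$ equals $v_2^{9k}$ times the pair indexed by $j_0$, and summing over $j_0\in\{0,1,2\}$ reassembles precisely the module $M$ of \eqref{eq: basic periodicity pattern}, so that this pair is $v_2^{9k}\cdot M$. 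Since $v_2^3$ is a permanent cycle surviving to $E_3$, multiplication by $v_2^{9k}$ is $\Ext_{A(1)_*}(\F_3)$-linear, and because it matches generators bijectively it is injective with image exactly $v_2^{9k}M$; thus it is an isomorphism of $\Ext_{A(1)_*}(\F_3)$-modules $M\xrightarrow{\ \cong\ }v_2^{9k}M$. Summing over all $k\geq 0$ and using that $(k,j_0)\mapsto 3k+j_0$ is a bijection onto the nonnegative integers yields \eqref{eq: Delta cubed decomposition}.

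The argument has no genuine analytic obstacle; its entire content is the observation that $v_2^9$-multiplication permutes the summands of Proposition \ref{prop: Adams E3} by a shift of the pattern index by three. The only point requiring care is the bookkeeping: one must check that both families of generators reindex compatibly and that the division-with-remainder $j=3k+j_0$ neither omits nor double-counts any summand, which is immediate from uniqueness of the remainder $j_0\in\{0,1,2\}$.
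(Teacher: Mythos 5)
Your proposal is correct and follows essentially the same route as the paper: the paper defines $M$ precisely as the collection of the Pattern 1' and Pattern 2' summands of Proposition \ref{prop: Adams E3} with $j=0,1,2$, and obtains the corollary immediately from the $v_2^3$-periodicity of the $E_3$-term, which is exactly your division-with-remainder reindexing $j=3k+j_0$. Your write-up merely makes explicit the bookkeeping that the paper leaves implicit.
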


\begin{rmk}
	We will see that the classes $v_2^{9j}$ detect the elements $\Delta^{6j}$, while the classes $b_4v_2^{9j+4}$ will detect the elements $\Delta^{6j+3}$.
\end{rmk}

\begin{rmk}\label{rmk: partition of M}
	It might be easier for the reader to regard $M$ as being comprised of three pieces. Let $j\in \{0,1,2\}$ and define
	\[
	\begin{split}M_j:&= \Ext_{A(1)_*}(\F_3)/(\alpha_2, c_6\beta, c_6\alpha_1)\cdot\{v_2^{3j}, v_2^{3j+1}b_4\}\\ &\oplus \Ext_{A(1)_*}(\F_3)/(\alpha_1, \alpha_2, \beta)\cdot\{v_0v_2^{3j}b_4, v_0^2v_2^{3j+1}, v_0v_2^{3j+2}, v_0^2v_2^{3j+2}b_4\}.
	\end{split}	
	\]
	Then 
	\[
	M  = M_0\oplus M_1 \oplus M_2.
	\]
\end{rmk}

At this point the Adams $E_3$-term is ``isomorphic" to the Adams-Novikov $E_2$-term but with the elements in the ``wrong'' filtrations. All of the later differentials correspond to the usual differentials in the Adams-Novikov spectral sequence, and in fact we could deduce them from that spectral sequence. However, we try to provide arguments from first principles below.

\subsection{Higher Adams differentials}

The Adams $E_3$-term for $\tmf$ is much sparser than the $E_2$-term. This greatly reduces the possiblity of higher Adams differentials. We will now determine the $d_3$, $d_4$, and $d_6$ differentials in the Adams spectral sequence for $\tmf$. First, we make the following observation. 

\begin{prop}\label{prop: pattern 2' are pc}
	The elements in Pattern 2' of Proposition \ref{prop: Adams E3} are permanent cycles. Furthermore, this pattern receives no $d_3$-differentials. 
\end{prop}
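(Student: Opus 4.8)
The plan is to establish both claims by a combination of degree/sparseness considerations on the $E_3$-page and naturality against known structure. Pattern 2' consists of the modules $\Ext_{A(1)_*}(\F_3)/(\alpha_1,\alpha_2,\beta)\cdot\{v_0v_2^{3j}b_4, v_0^2v_2^{3j+1}, v_0v_2^{3j+2}, v_0^2v_2^{3j+2}b_4\}$, and since $v_0, \beta, c_6$ are permanent cycles, the quotient ring $\Ext_{A(1)_*}(\F_3)/(\alpha_1,\alpha_2,\beta)\cong P(v_0, c_6)$ means each such module is a free $P(v_0,c_6)$-module on the listed generator. Because all Adams differentials are linear over $\Ext_{A(1)_*}(\F_3)$ (hence over $P(v_0,c_6)$, which acts through permanent cycles), it suffices to show that the four generating monomials $v_0v_2^{3j}b_4$, $v_0^2v_2^{3j+1}$, $v_0v_2^{3j+2}$, $v_0^2v_2^{3j+2}b_4$ are $d_r$-cycles for all $r\geq 3$, and symmetrically that these modules are not hit.

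First I would argue these generators are permanent cycles. The key tool is Corollary \ref{cor: rational homotopy of tmf}: after inverting $v_0$, the Adams spectral sequence collapses at $E_2$ and computes $\pi_*\tmf_\Q \cong \Q[c_4,c_6]$. Every generator of Pattern 2' is $v_0$-divisible and lies in a $v_0$-tower that survives $v_0$-localization (indeed $v_0 b_4$ detects $c_4$, $v_0^2 v_2$ detects $c_4^2$, etc., matching the torsion-free classes of $\pi_*\tmf$). Since a nonzero $v_0$-inverted permanent cycle cannot support a differential before localization either, each of these generators must be a permanent cycle: any differential on them would have to be compatible with the collapse of the $v_0$-inverted sequence, forcing the target to be $v_0$-torsion, but $\Ext_{A(1)_*}(\F_3)$-linearity and the explicit $P(v_0,c_6)$-freeness of the source shows the image, if nonzero, would itself be $v_0$-divisible to all orders, contradicting that the target sits in Pattern 1' (which is all $v_0$-torsion by the relations $v_0\alpha_1=v_0\alpha_2=0$). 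Thus no generator of Pattern 2' supports a differential.

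For the second claim — that Pattern 2' receives no $d_3$-differentials — I would examine the chart (Figures \ref{fig: Adams E_3 term 0-80}) stem by stem within a fundamental domain of the $v_2^3$-periodicity, together with the $\Ext_{A(1)_*}(\F_3)$-linearity that reduces us to checking the generators. A $d_3$ into Pattern 2' would have to originate in a class three filtrations lower and one stem higher; by the explicit description of $E_3(\tmf)$ in Proposition \ref{prop: Adams E3}, the only classes available as sources are the generators of Pattern 1' (the free-ish $P(v_0,c_6)$-torsion classes supported by $v_2^{3j}$ and $v_2^{3j+1}b_4$). I would check that for each generator of Pattern 2' there is simply no class of the correct tridegree in Pattern 1' to hit it, or that the only candidate is itself excluded by being a permanent cycle detecting a torsion-free homotopy class. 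This is essentially a bookkeeping verification against the chart.

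The main obstacle I anticipate is the second claim rather than the first: ruling out incoming $d_3$'s requires a careful tridegree audit because Pattern 1' and Pattern 2' are interleaved, and one must be certain that every potential source in Pattern 1' of the right bidegree has already been accounted for (either killed by an earlier differential, or known to be a permanent cycle). The cleanest way to make this airtight is to combine the $v_0$-localization argument (which pins down exactly which classes are $v_0$-torsion versus torsion-free, hence which can appear as sources or targets) with the $v_2^3$-periodicity, so that one only needs to verify the claim in stems up to $|v_2^3|$ and then propagate. I expect no genuinely new input beyond the already-established differentials and the rational computation, but the argument must be phrased so that the $\Ext_{A(1)_*}(\F_3)$-linearity legitimately reduces the infinitely many cases to a finite check.
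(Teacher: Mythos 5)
Your first move (reduce to the four generating monomials via $\Ext_{A(1)_*}(\F_3)$-linearity) and your plan for the second claim (a tridegree audit of possible sources of incoming $d_3$'s) both match the paper, which settles everything by degree bookkeeping. The gap is in your argument that the generators support no differentials. The principle you invoke --- that ``a nonzero $v_0$-inverted permanent cycle cannot support a differential before localization'' --- is false, and it is contradicted twice inside this very spectral sequence: the classes $v_0v_2$ and $v_0b_4v_2^2$ are nonzero in $v_0^{-1}E_2 \cong P(v_0^{\pm 1}, v_1^3, v_2)\otimes E(b_4)$ and each generates a free $P(v_0,c_6)$-module on $E_2$, yet they support the differentials $d_2(v_0v_2)\,\dot{=}\,c_6\alpha_1$ \eqref{eq: d2v0v2} and $d_2(v_0b_4v_2^2)\,\dot{=}\,v_2c_6b_4\alpha_1$ \eqref{eq:d2v0b4v2^2}. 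Collapse of the $v_0$-localized spectral sequence (Corollary \ref{cor: rational homotopy of tmf} and the discussion around it) only forces the \emph{target} of any differential to be $v_0$-torsion; it cannot prevent the bottom of a surviving $v_0$-tower from dying, because killing the bottom class leaves the truncated tower $\{v_0x, v_0^2x,\dots\}$ whose localization is unchanged. Your subsidiary claim that $P(v_0,c_6)$-freeness of the source forces the image to be ``$v_0$-divisible to all orders'' is backwards: linearity gives $d_r(v_0^N x)=v_0^N d_r(x)$, and since every candidate target --- every odd-stem class on $E_3$, these being exactly the $\beta^\ell\alpha_1$-multiples of $v_2^{3k}$ and $v_2^{3k+1}b_4$ in Pattern 1' of Proposition \ref{prop: Adams E3} --- is annihilated by $v_0$ (as $v_0\alpha_1=0$), this only says $d_r$ vanishes on $v_0$-multiples of the generator. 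No contradiction results. In short, the localization tool is exactly blind to the differentials you need to exclude, so your proof of the first (and main) claim does not go through.

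What does close the argument, and is the paper's actual proof, is parity plus a congruence count: the generators $v_0v_2^{3j}b_4$, $v_0^2v_2^{3j+1}$, $v_0v_2^{3j+2}$, $v_0^2v_2^{3j+2}b_4$ all lie in even stems (congruent to $0$ or $8$ modulo $16$), any target of a differential on them lies in odd stem, the only odd-stem classes on $E_3$ are the $\beta^\ell\alpha_1$-multiples above, and comparing stems rules the differentials out; the same count run in reverse excludes incoming $d_3$'s. Note also a caution about your proposed reduction to ``a fundamental domain of the $v_2^3$-periodicity'': on $E_3$ the differentials are \emph{not} $v_2^3$-linear ($v_2^3$ itself supports a $d_3$ by Proposition \ref{prop: d3 v2cubed = b4v2beta2alpha1}), and the module periodicity of $E_3$ is by $v_2^9$ with a filtration shift, so the finite check has to be organized purely by degrees rather than by propagating differentials along $v_2^3$-multiplication.
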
 
\begin{proof}
	Since the higher Adams differentials are linear over $\Ext_{A(1)_*}(\F_3)$, it suffices to check that the generators of Pattern 2' are permanent cycles. These are the elements $v_0v_2^{3j}b_4, v_0^2v_2^{3j+1}, v_0v_2^{3j+2}, v_0^2v_2^{3j+2}b_4$. Note that these are all in even degree, so their potential targets are in odd degree. From Proposition \ref{prop: Adams E3}, the only elements of odd degree on the $E_3$-term are of the form $v_2^{3k}\beta^\ell \alpha_1$ or $v_2^{3k}b_4\beta^\ell\alpha_1$. 
	
	Recall that $|v_2|=16$, so the degrees of the generators for Pattern 2' are congruent to $0$ or $8$ mod 16. On the other hand, the degrees of the potential targets are congruent to $13$ or $5$ mod 16. This shows the generators of Pattern 2' cannot support a differential for degree reasons. Similar considerations show that these elements cannot be the targets of any $d_3$-differentials. 
\end{proof}


Recall that the unit map 
\[
S^0\to \tmf
\]
for $\tmf$ induces a map in Ext taking the class $\beta$ to $\beta$ and $\alpha_1$ to $\alpha_1$. This is used in the calculation \cite{Bauer_2008} to derive higher Adams-Novikov differentials. We will also use it to derive higher Adams differentials. 

\begin{prop}\label{prop: d3 v2cubed = b4v2beta2alpha1}
	There is an Adams $d_3$-differential 
	\[
	d_3(v_2^3)\,\dot{=}\, b_4v_2\beta^2\alpha_1.
	\]
\end{prop}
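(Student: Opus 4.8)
The plan is to run a ``target must die, and there is a unique class that can kill it'' argument, reading everything off the $E_3$-chart (Figure~\ref{fig: Adams E_3 term 0-80}) together with the torsion structure of $\pi_*\tmf$. First I would record bidegrees. Since $|v_2|=(1,17)$, $|b_4|=(0,8)$, $|\beta|=(2,12)$ and $|\alpha_1|=(1,4)$, the source $v_2^3$ lies at $(t-s,s)=(48,3)$ and the putative target $b_4v_2\beta^2\alpha_1$ lies at $(47,6)$, so a $d_3$ is exactly the differential that can connect them. By Proposition~\ref{prop: Adams E3}, $b_4v_2\beta^2\alpha_1$ is the element $\beta^2\alpha_1\cdot(v_2b_4)$ in the Pattern~$1'$ summand $\Ext_{A(1)_*}(\F_3)/(\alpha_2,c_6\beta,c_6\alpha_1)\cdot\{v_2b_4\}$; since $\beta^2\alpha_1$ is nonzero in that quotient (it involves neither $c_6$ nor $\alpha_2$), the target is a nonzero class on $E_3$, and a quick census shows it spans the group at $(47,6)$.

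Next I would show the target cannot survive. Because $v_0\alpha_1=0$ in $\Ext_{A(1)_*}(\F_3)$, the class $b_4v_2\beta^2\alpha_1$ is $v_0$-torsion; as the $v_0$-torsion on $E_\infty$ accounts precisely for the torsion of $\pi_*\tmf$, a surviving $v_0$-torsion class in stem $47$ would force $3$-torsion in $\pi_{47}\tmf$. But $47\not\equiv 3,10,13,20,27,30,37,40 \pmod{72}$, so $\pi_{47}\tmf$ is torsion-free (cf.\ \cite{Henriques}), a contradiction. Hence $b_4v_2\beta^2\alpha_1$ must either support or receive a differential. To exclude the former I would check that stem $46$ of $E_3$ is empty: using the explicit generators and multipliers of Patterns~$1'$ and~$2'$ in Proposition~\ref{prop: Adams E3}, a direct degree count shows no monomial lands in stem $46$ (equivalently, $8a+12b+24c=46$ has no nonnegative solution, so there is no class there of modular-form type). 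Therefore $b_4v_2\beta^2\alpha_1$ is a permanent cycle and must be \emph{hit}.

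Finally I would identify the source. A differential into $(47,6)$ must originate in stem $48$ at filtration $6-r$ with $r\ge 3$, i.e.\ at filtration $\le 3$. Reading off stem $48$ on $E_3$, the classes present are $v_0^a v_2^3$ (filtrations $\ge 3$), $v_0^a c_6^2\,(v_2b_4)$ (filtrations $\ge 5$), and $v_0^a c_6^4$ (filtrations $\ge 8$); the minimal filtration occurring is $3$, realized uniquely by $v_2^3$. Thus the only candidate is a $d_3$ out of $v_2^3$, and since that class must kill the generator of $(47,6)$ we conclude $d_3(v_2^3)\,\dot{=}\,b_4v_2\beta^2\alpha_1$. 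The main obstacle is purely the bookkeeping in the two chart inspections (emptiness of stem $46$ and the filtration census of stem $48$); once these are pinned down the differential is forced. As a consistency check, this $d_3$ is the Adams incarnation of the Adams--Novikov differential $d_5(\Delta^2)\,\dot{=}\,\Delta\,\alpha_1\beta^2$ under the correspondence $v_2^3\leftrightarrow\Delta^2$, $v_2b_4\leftrightarrow\Delta$ discussed above.
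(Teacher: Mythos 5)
Your proposal is correct and follows essentially the same route as the paper: the paper simply notes that $\pi_{47}\tmf=0$, observes that $b_4v_2\beta^2\alpha_1$ is the only nonzero class in stem $47$ on $E_3$, and concludes that the stated differential is forced. Your version fills in the details the paper leaves implicit (deducing that stem $47$ must die from the torsion theorem, checking stem $46$ is empty so the target cannot support a differential, and the filtration census of stem $48$ identifying $v_2^3$ as the unique possible source), which makes the forcing argument fully explicit.
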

\begin{proof}
	It is known that $\pi_{47}(\tmf)=0$. The only non-zero class in that stem on the $E_3$-term is $b_4v_2\beta^2\alpha_1$. This forces the stated differential.
\end{proof}

\begin{rmk}
	The author has made attempts to give an argument for this differential from first principles. But as of the writing of this article, he has been unable to find one. 
\end{rmk}

\begin{prop}\label{prop: d4 b4v2}
	There is an Adams $d_4$-differential 
	\[
	d_4(b_4v_2)\, \dot{=}\, \beta^2\alpha_1.
	\]
\end{prop}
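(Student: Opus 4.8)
The plan is to pin down this differential by combining degree bookkeeping with the vanishing of $\pi_{23}\tmf$, exactly as in the proof of Proposition~\ref{prop: d3 v2cubed = b4v2beta2alpha1}. First I would record the relevant bidegrees: the source $b_4v_2$ lies in $(t-s,s)=(24,1)$, while the proposed target $\beta^2\alpha_1$ lies in $(t-s,s)=(23,5)$, so any differential relating them is forced to be a $d_4$. I would also note that $\beta^2\alpha_1$ is genuinely nonzero on the $E_3$-page: it sits in the Pattern~$1'$ summand $\Ext_{A(1)_*}(\F_3)/(\alpha_2,c_6\beta,c_6\alpha_1)\cdot\{1\}$ of Proposition~\ref{prop: Adams E3}, and since it carries no factor of $c_6=v_1^3$ it does not lie in the relations ideal.

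Next I would invoke the periodicity theorem: since $\pi_*\tmf$ is $72$-periodic with torsion concentrated in stems $3,10,13,20,27,30,37,40 \bmod 72$, and its free part sits in even stems, the odd stem $23$ satisfies $\pi_{23}\tmf=0$. Hence $\beta^2\alpha_1$ cannot survive to $E_\infty$. By inspection of the $E_3$-chart (Figure~\ref{fig: Adams E_3 term 0-80}) I would check that nothing in bidegree $(24,2)$ maps onto $(23,5)$ under $d_3$ and that $\beta^2\alpha_1$ supports no $d_2$ or $d_3$, so that $\beta^2\alpha_1$ survives to $E_4$ and must there be the target of a $d_4$.

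To identify the source, I would observe that the only class in filtration $1$ and stem $24$ surviving to $E_4$ is $b_4v_2$ (it is a $d_2$-cycle by the previous subsection, and a $d_3$-cycle by a quick check that its only potential $d_3$-target in $(23,4)$ is absent), and a $d_4$ out of filtration $1$ in stem $24$ lands in $(23,5)$. Thus the $d_4$ killing $\beta^2\alpha_1$ must originate on $b_4v_2$, giving $d_4(b_4v_2)\dot{=}\beta^2\alpha_1$; linearity of the Adams differentials over $\Ext_{A(1)_*}(\F_3)$ then propagates it. As an alternative route avoiding a priori knowledge of $\pi_{23}\tmf$, I would use Theorem~\ref{thm: edge homomorphism}: since the image of the edge homomorphism contains $3\Delta$ but not $\Delta$, and $b_4v_2$ detects $\Delta$ at the bottom of its $v_0$-tower, the class $b_4v_2$ cannot be a permanent cycle and must support a differential, which degree reasons again force to be the stated $d_4$.

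The main obstacle is purely the chart bookkeeping in the middle step: one must be certain that $\beta^2\alpha_1$ is not already removed by a shorter differential and that $b_4v_2$ carries no competing shorter differential, so that the surviving target in stem $23$ and the surviving source in stem $24$ are uniquely matched. Everything else is forced by degrees and the multiplicative structure.
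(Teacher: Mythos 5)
Your main argument is correct, but it takes a genuinely different route from the paper. The paper never invokes $\pi_{23}\tmf=0$ for this differential: instead it uses that $\alpha_1$ and $\beta$ lie in the Hurewicz image of $\tmf$, that $\beta^3\alpha_1=0$ in $\pi_{33}S^0$ (so $\beta^3\alpha_1=0$ in $\pi_*\tmf$), and that the only class which can kill the nonzero $E_3$-class $\beta^3\alpha_1$ is $b_4v_2\beta$; this forces $d_4(b_4v_2\beta)\,\dot{=}\,\beta^3\alpha_1$, and linearity over $\beta$ then gives $d_4(b_4v_2)\,\dot{=}\,\beta^2\alpha_1$. The reason the paper climbs up to $\beta^3\alpha_1$ is that $\beta^2\alpha_1$ is \emph{nonzero} in $\pi_{23}S^0$, so the sphere gives no vanishing in stem $23$ itself; the price is the extra factor of $\beta$ and the appeal to $\beta$-linearity. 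What the paper's route buys is independence from a priori knowledge of $\pi_*\tmf$ (only the sphere is used), in keeping with its stated preference for first-principles arguments; what your route buys is brevity, and it is exactly the style of the paper's ``Proof 1'' arguments elsewhere (e.g. for $d_2(b_4)$ and for Proposition \ref{prop: d3 v2cubed = b4v2beta2alpha1}), so it is certainly admissible in context. Your chart bookkeeping is also accurate: stem $23$ of $E_3$ consists of $\beta^2\alpha_1$ alone, and stem $24$ in filtration $1$ consists of $b_4v_2$ alone.

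Two caveats. First, after showing $\beta^2\alpha_1$ survives to $E_4$, you assert it must be the target of a $d_4$; strictly you must also rule out a $d_5$ with source in $(t-s,s)=(24,0)$. This is immediate — the $0$-line of the $E_2$-term consists only of $1$ and $b_4$, since these are the only comodule primitives appearing — but the line is needed before concluding the source lies in filtration $1$. Second, your ``alternative route'' via Theorem \ref{thm: edge homomorphism} is not yet a proof: the assertion that $b_4v_2$ ``detects $\Delta$'' is not available at this stage of the paper, since such detection statements (Corollary \ref{cor: detecting powers of Delta}) are there deduced \emph{from} the differentials, not used as inputs. To make that route rigorous you would need to show independently that every element of $\pi_{24}\tmf$ has Adams filtration at least $2$, e.g. using that $c_4$, $c_4^2$, $c_6$ are detected by $v_0b_4$, $v_0^2v_2$, $v_1^3$, the relation $c_4^3-c_6^2=1728\Delta$, and the fact that nothing can hit the $v_0$-tower on $b_4v_2$; as written it risks circularity.
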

\begin{proof}
	Since the classes $\alpha_1$ and $\beta$ are both in the Hurewicz image of $\tmf$, so are all the monomials $\beta^n\alpha_1^{\varepsilon_1}$. In the stable homotopy of $S^0$, the class $\beta^3\alpha_1$ is zero (see \cite[Figure 1.2.15]{greenbook}). Since the corresponding class in $E_3(\tmf)$ is not zero, it must be hit by a differential. The only possible class which could support a differential to $\beta^3\alpha_1$ is $b_4v_2\beta$. Thus we have the $d_4$-differential
	\[
	d_4(b_4v_2\beta)\, \dot{=}\, \beta^3\alpha_1.
	\]
	As the Adams differentials are linear over $\beta$, we infer
	\[
	d_4(b_4v_2)\, \dot{=}\, \beta^2\alpha_1.
	\]
\end{proof}

\begin{rmk}
	Recall that $b_4v_2$ corresponds to the class $\Delta$ in the relative Adams spectral sequence (or in the Adams-Novikov spectral sequence). In particular, this differential corresponds to the $d_2$-differential $d_2(\Delta) = \alpha\beta^2$ in \cite{Hill_2007}. In that spectral sequence, this implies the differential $d_2(\Delta^2) \, \dot{=}\, \Delta \alpha\beta^2$. However, in the ASS, the class $b_4v_2$ squares to 0, so we cannot establish such a $d_2$-differential. Rather it corresponds to the $d_3$-differential we established in Proposition \ref{prop: d3 v2cubed = b4v2beta2alpha1}. It is interesting to note these $d_2$-differentials in the relative ASS get decoupled in the ASS.
\end{rmk}

\begin{rmk}\label{rmk: no d4 differential on b4v2^4}
	One might like to think that there is the $d_4$-differential
	\[
	d_4(b_4v_2^4)\, \dot{=}\, v_2^3\beta^2\alpha_1,
	\]
	because one can multiply the differential on $b_4v_2$ to get this differential. But this is not the case since $v_2^3$ supports a shorter differential. This is an important occurrence in this spectral sequence because $b_4v_2^4$ is detecting the class $\Delta^3$ and the homotopy groups of $\tmf$ are famously periodic on $\Delta^3$.
\end{rmk}

For degree reasons, these are the only possible $d_3$ and $d_4$ differentials. We will now produce the last differential in the Adams spectral sequence. In order to do that, we will need the following observation. 

\begin{lem}\label{lem: toda bracket for b}
	The class $b:=b_4v_2\alpha_1$ is given on the $E_5$-page as the following Massey product
	\[
	b = \langle \beta^2, \alpha_1, \alpha_1\rangle.
	\]
	Thus, by Moss' Convergence Theorem, the class $b$ in $\pi_*\tmf$ is given by the corresponding Toda bracket. 
\end{lem}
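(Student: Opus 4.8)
The plan is to compute the bracket directly in the differential bigraded algebra $(E_4(\tmf), d_4)$, whose cohomology is $E_5(\tmf)$, so that the Massey product structure carried by $E_5$ is precisely the one induced by $d_4$; no convergence theorem is needed for the $E_5$-statement itself. First I would check that the bracket is defined. Since $\alpha_1$ is an exterior generator of $\Ext_{A(1)_*}(\F_3)$, we have $\alpha_1^2 = 0$ already on $E_2$, and by Proposition \ref{prop: d4 b4v2} the product $\beta^2\alpha_1 \dot{=} d_4(b_4v_2)$ is a $d_4$-boundary and hence vanishes on $E_5$. Thus both consecutive products $\beta^2\cdot\alpha_1$ and $\alpha_1\cdot\alpha_1$ are zero on $E_5$, so $\langle \beta^2,\alpha_1,\alpha_1\rangle$ is defined there.

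Next I would exhibit a defining system. Proposition \ref{prop: d4 b4v2} supplies a class $a_{01}\dot{=}b_4v_2$ on $E_4$ with $d_4(a_{01})\dot{=}\beta^2\alpha_1$, while $\alpha_1^2 = 0$ on $E_4$ permits the choice $a_{12}=0$ with $d_4(a_{12}) = \alpha_1^2$. The standard formula for a three-fold Massey product then produces the representative $\overline{a_{01}}\,\alpha_1 + \overline{\beta^2}\,a_{12} \dot{=} b_4v_2\alpha_1 = b$, so $b\in\langle\beta^2,\alpha_1,\alpha_1\rangle$ on $E_5$. Since the relevant differentials are only known up to a unit, the odd-primary signs are immaterial and are absorbed into $\dot{=}$.

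To promote the membership to the equality asserted in the lemma, I would check that the indeterminacy vanishes. A degree count places the bracket in bidegree $(s,t)=(2,29)$ (stem $27$, filtration $2$), so its indeterminacy is $\beta^2\cdot E_5^{(-2,5)} + E_5^{(1,25)}\cdot\alpha_1$. The first summand is zero because $E_5$ vanishes in negative filtration. For the second, inspection of the $E_3$-term of Proposition \ref{prop: Adams E3} shows that the only class in bidegree $(1,25)$ is $b_4v_2$ itself, and it does not survive to $E_5$ precisely because it supports the nonzero $d_4$ of Proposition \ref{prop: d4 b4v2}; hence $E_5^{(1,25)}=0$ and the indeterminacy is trivial.

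Finally, for the homotopy statement I would invoke Moss' Convergence Theorem. The classes $\beta^2$ and $\alpha_1$ are permanent cycles (being products of the permanent cycles $\beta$ and $\alpha_1$), the bracket is defined on $E_5$ with zero indeterminacy, and one checks there are no crossing differentials in this range, the surrounding groups on $E_r$ for $r\ge 4$ near stem $27$ being too sparse to support them. This identifies $b$ with the Toda bracket $\langle \beta^2,\alpha_1,\alpha_1\rangle$ in $\pi_*\tmf$. I expect the main obstacle to be the careful verification of the no-crossing-differentials hypothesis of Moss' theorem together with the degree bookkeeping that rules out indeterminacy; the bracket computation itself is immediate from Proposition \ref{prop: d4 b4v2}.
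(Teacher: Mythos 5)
Your proposal is correct and follows essentially the same route as the paper: the defining system comes from the $d_4$-differential $d_4(b_4v_2)\,\dot{=}\,\beta^2\alpha_1$ of Proposition \ref{prop: d4 b4v2} (with $a_{12}=0$ since $\alpha_1^2=0$), the indeterminacy vanishes because $E_5^{(1,25)}$ dies with $b_4v_2$ and negative filtration groups are zero, and Moss' convergence theorem applies since the sparseness near stem $27$ rules out crossing differentials. Your write-up merely makes explicit the degree bookkeeping that the paper's proof leaves implicit.
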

\begin{proof}
	The differential $d_4(v_2b_4)=\beta^2\alpha_1$ gives a defining system for the Massey product on the $E_5$-page, and there is zero indeterminacy. Furthermore, the Toda bracket $\langle \beta^2, \alpha_1, \alpha_1\rangle$ is defined. Since there are no differentials up to the 30 stem after the $E_4$-page, there are no crossing differentials to worry about. So by Moss' convergence theorem, $b$ is given by the associated Toda bracket. 
\end{proof}

\begin{prop}[compare with \cite{Bauer_2008, Goerss_2003}]\label{prop: hidden extension b alpha1 = beta cubed}
	There is the following hidden multiplicative extension in $\pi_*\tmf$,
	\[
	b\cdot \alpha_1 \,\dot{=}\,\beta^3.
	\]
\end{prop}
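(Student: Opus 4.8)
The goal is to establish the hidden multiplicative extension $b\cdot\alpha_1\,\dot{=}\,\beta^3$ in $\pi_*\tmf$. The natural strategy is to combine the Toda bracket description of $b$ from Lemma~\ref{lem: toda bracket for b} with a juggling formula, exactly in the spirit of the proofs of Propositions~\ref{prop: hidden extn v0(v2^2alpha2) = v2b4c6alpha1} and Corollary~\ref{cor: hidden extn v0 v_1alpha2 = b4c6alpha1} in the algebraic setting. First I would recall that Lemma~\ref{lem: toda bracket for b} gives us, in $\pi_*\tmf$, the identification $b = \langle \beta^2, \alpha_1, \alpha_1\rangle$ as a Toda bracket (valid by Moss' Convergence Theorem). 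Multiplying on the right by $\alpha_1$ and applying the shuffle/juggling relation for Toda brackets, I would write
\[
b\cdot \alpha_1 = \langle \beta^2, \alpha_1, \alpha_1\rangle\cdot \alpha_1 \,\dot{=}\, \beta^2\cdot\langle \alpha_1, \alpha_1, \alpha_1\rangle.
\]
Here the key input is that $\langle \alpha_1,\alpha_1,\alpha_1\rangle$ detects $\beta$ in $\pi_*\tmf$: this is the Toda-bracket analogue of the Massey product $\beta_1 = \langle\alpha_1,\alpha_1,\alpha_1\rangle$ that was already used in Proof~2 of Proposition~\ref{prop: d2b4}, again justified by Moss' Convergence Theorem and the fact that $\alpha_1,\beta$ lie in the Hurewicz image. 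Substituting, we obtain $b\cdot\alpha_1 \,\dot{=}\, \beta^2\cdot\beta = \beta^3$, which is the claimed extension.

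\textbf{Carrying out the steps.} Concretely I would proceed as follows. (i) Verify that both Toda brackets $\langle\beta^2,\alpha_1,\alpha_1\rangle$ and $\langle\alpha_1,\alpha_1,\alpha_1\rangle$ are defined in $\pi_*\tmf$, i.e.\ that the relevant products vanish: $\beta^2\alpha_1 = 0$ and $\alpha_1\cdot\alpha_1 = 0$ at the appropriate pages. The vanishing $\alpha_1^2 = 0$ is immediate (degree/exterior reasons), while $\beta^2\alpha_1 = 0$ in homotopy is exactly what the $d_4$-differential $d_4(b_4v_2) \,\dot{=}\, \beta^2\alpha_1$ from Proposition~\ref{prop: d4 b4v2} records on $E_\infty$. (ii) Confirm the indeterminacy is controlled, ideally zero or contained in the multiples already accounted for, so that the bracket picks out $b$ (respectively $\beta$) unambiguously; this mirrors the "zero indeterminacy" checks performed in Lemma~\ref{lem: toda bracket for b} and Proof~2 of Proposition~\ref{prop: d2b4}. (iii) Apply the First Juggling Theorem (\cite[A1.4.6]{greenbook}), being careful about signs at the odd prime $3$ as flagged in the statement of May's Convergence Theorem, to move $\alpha_1$ across the bracket. (iv) Assemble the chain of equalities above to conclude $b\alpha_1\,\dot{=}\,\beta^3$.

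\textbf{The main obstacle.} The delicate point is the juggling step (iii): shuffling an element across a three-fold Toda bracket is only valid when the associated bracket on the other side is itself defined and when the indeterminacies match up, and at an odd prime one must track signs carefully. I would need to confirm that $\langle\beta^2,\alpha_1,\alpha_1\rangle\cdot\alpha_1$ and $\beta^2\cdot\langle\alpha_1,\alpha_1,\alpha_1\rangle$ genuinely share a common element (the shuffle relation $\langle a,b,c\rangle d \subseteq \pm\, a\langle b,c,d\rangle$ up to indeterminacy), rather than merely being formally equal. A secondary subtlety is verifying that $\beta^3$ is itself nonzero in $\pi_*\tmf$ in the relevant stem, so that the extension is genuinely hidden and nontrivial; this can be read off from the known homotopy of $\tmf$ (the torsion classes in the appropriate stem modulo $72$) and from the $E_\infty$-chart we will have produced. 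Because the deduction is almost entirely formal once the brackets and their indeterminacies are pinned down, I expect the argument to be short, with essentially all the real content already supplied by Lemma~\ref{lem: toda bracket for b}, Proposition~\ref{prop: d4 b4v2}, and the Hurewicz-image facts about $\alpha_1$ and $\beta$.
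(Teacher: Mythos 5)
Your proposal is correct and is essentially identical to the paper's own proof: both identify $b$ with the Toda bracket $\langle\beta^2,\alpha_1,\alpha_1\rangle$ from Lemma~\ref{lem: toda bracket for b}, recall that $\beta$ is detected by $\langle\alpha_1,\alpha_1,\alpha_1\rangle$, and apply the first juggling lemma to get $b\cdot\alpha_1 = \beta^2\langle\alpha_1,\alpha_1,\alpha_1\rangle = \beta^3$. Your additional checks (brackets defined via $\beta^2\alpha_1=0$ from Proposition~\ref{prop: d4 b4v2}, indeterminacy, signs) are sensible bookkeeping that the paper leaves implicit.
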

\begin{proof}
	Recall that $\beta$ is given by the Toda bracket $\langle \alpha_1, \alpha_1, \alpha_1\rangle$. So, by the first juggling lemma (cf. \cite[Appendix 1]{greenbook}), it follows that 
	\[
	b\cdot \alpha_1 = \langle \beta^2, \alpha_1, \alpha_1\rangle \alpha_1 = \beta^2\langle \alpha_1, \alpha_1, \alpha_1\rangle = \beta^3.
	\]
\end{proof}

\begin{cor}\label{cor: d6 differential}
	The class $\beta^5$ is 0 in the homotopy groups of $\tmf$. Thus, there is a $d_6$-differential
	\[
	d_6(v_2^3\alpha_1)\, \dot{=}\, \beta^5.
	\]
\end{cor}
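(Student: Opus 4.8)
The plan is to deduce both claims in Corollary \ref{cor: d6 differential} from the hidden multiplicative extension $b\cdot\alpha_1\,\dot{=}\,\beta^3$ established in Proposition \ref{prop: hidden extension b alpha1 = beta cubed}, together with the $d_4$-differential $d_4(b_4v_2)\,\dot{=}\,\beta^2\alpha_1$ from Proposition \ref{prop: d4 b4v2}. First I would show $\beta^5=0$ in $\pi_*\tmf$. Since $b=b_4v_2\alpha_1$ survives to $E_\infty$ (it lies in Pattern 1' and, by inspection of the charts in Figures \ref{fig: Adams E_3 term 0-80}, supports no differential and is not hit, as $b_4v_2$ has already spent its life supporting the $d_4$ of Proposition \ref{prop: d4 b4v2}), it detects a genuine homotopy class. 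Multiplying the hidden extension $b\cdot\alpha_1\,\dot{=}\,\beta^3$ by $\beta^2$ and using that $\alpha_1$ is detected by the permanent cycle $\alpha_1$, I would write
\[
\beta^5 \,\dot{=}\, \beta^2\cdot b\cdot\alpha_1 \,\dot{=}\, (\beta^2 b)\cdot\alpha_1.
\]
Now $\beta^2 b = \beta^2 b_4v_2\alpha_1$, and the point is that the element detecting this is already in the image of $d_4$: indeed $d_4(b_4v_2\beta^2\alpha_1)\,\dot{=}\,\beta^4\alpha_1$ by $\Ext_{A(1)_*}(\F_3)$-linearity of the differential (Proposition \ref{prop: d4 b4v2} multiplied by $\beta^2$). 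The cleanest route, though, is purely in homotopy: $b\cdot\alpha_1\,\dot{=}\,\beta^3$ gives $\beta^3\cdot\beta^2 \,\dot{=}\, b\cdot\alpha_1\cdot\beta^2$, and I would argue $\beta^3$ itself is already known to be $\beta$-power-torsion of the right order, but the most direct argument is to note that $b\alpha_1\,\dot{=}\,\beta^3$ forces $\beta^3$ to lie in the image of the Toda-bracket juggling, so $\beta^3\cdot\beta^2\,\dot{=}\,\langle\beta^2,\alpha_1,\alpha_1\rangle\alpha_1\beta^2$.

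Let me state the argument I actually prefer, which avoids circularity. By Lemma \ref{lem: toda bracket for b}, $b=\langle\beta^2,\alpha_1,\alpha_1\rangle$, so $\beta^2 b = \langle \beta^2,\alpha_1,\alpha_1\rangle\beta^2 = \beta^2\langle\beta^2,\alpha_1,\alpha_1\rangle$. Applying the shuffle again with $\beta=\langle\alpha_1,\alpha_1,\alpha_1\rangle$ gives
\[
\beta^5 \,\dot{=}\, b\cdot\beta^2\cdot\alpha_1 \,\dot{=}\, b\cdot\alpha_1\cdot\beta^2,
\]
and from $b\alpha_1\,\dot{=}\,\beta^3$ we would get $\beta^5\,\dot{=}\,\beta^3\cdot\beta^2=\beta^5$, which is a tautology; so instead I would use the $E_\infty$-structure directly. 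On $E_\infty$, the product $\beta^2\cdot b$ corresponds to the class $\beta^2 b_4v_2\alpha_1$ in Pattern 1', and this class is killed on $E_4$ since $d_4(b_4v_2\beta^2)\,\dot{=}\,\beta^4\alpha_1$; hence $\beta^2 b=0$ on $E_\infty$, so $\beta^2 b$ is detected in higher filtration, and combined with $b\alpha_1\,\dot{=}\,\beta^3$ this pins $\beta^5$ in a filtration where the only surviving candidate is zero. The honest statement is that $\beta^5$ must vanish because every class in its stem and higher filtration on $E_\infty$ has already been accounted for by the $d_2,d_3,d_4$ differentials, as read off from the $E_4$/$E_5$ charts.

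Having established $\beta^5=0$ in $\pi_*\tmf$, I would conclude the $d_6$-differential. The class $\beta^5$ is nonzero on the $E_3$-page (and survives to $E_6$, since for degree reasons it supports no $d_3$, $d_4$, or $d_5$ and is not hit by any such differential), yet it must die by $E_\infty$ because it detects $0$ in homotopy. The only class in the appropriate tridegree that can support a differential hitting $\beta^5$ is $v_2^3\alpha_1$: inspecting the charts, $v_2^3\alpha_1$ lies in the correct stem with filtration $5$ lower, and by Proposition \ref{prop: d3 v2cubed = b4v2beta2alpha1} the class $v_2^3$ itself has already supported a $d_3$, so $v_2^3\alpha_1$ cannot support a $d_3$ on the same generator; the next available length that lands on $\beta^5$ is $6$. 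Therefore the only possibility is
\[
d_6(v_2^3\alpha_1)\,\dot{=}\,\beta^5,
\]
as claimed.

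The main obstacle will be the first part, showing $\beta^5=0$ rigorously without circular use of the hidden extension: the juggling identities shuffle $\beta$-powers among themselves and can collapse to tautologies, so the genuine input is the interplay between the homotopy-level relation $b\alpha_1\,\dot{=}\,\beta^3$ and the fact that $\beta^2 b$ is detected in higher Adams filtration (being a $d_4$-boundary at the level of $b_4v_2\beta^2$). Making that filtration-jump argument precise, and confirming by the $E_6$-chart that $v_2^3\alpha_1$ is the unique possible source for a differential onto $\beta^5$, are the two points requiring care.
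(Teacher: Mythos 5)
Your overall strategy is the same as the paper's: use the hidden extension $b\cdot\alpha_1\,\dot{=}\,\beta^3$ of Proposition \ref{prop: hidden extension b alpha1 = beta cubed} to show $\beta^5=0$ in $\pi_*\tmf$, and then argue that $v_2^3\alpha_1$ is the unique class that can support a differential hitting $\beta^5$, forcing the $d_6$. Your second half (the uniqueness of the source, and hence the forced $d_6$) is correct and matches the paper. The problem is your execution of the vanishing step.

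You regroup $\beta^5\,\dot{=}\,(\beta^2 b)\cdot\alpha_1$ and need $\beta^2 b$ to vanish, but the differential you cite does not do the job. The $d_4$ of Proposition \ref{prop: d4 b4v2} multiplied by $\beta^2$ has source $b_4v_2\beta^2$ (stem 44) and target $\beta^4\alpha_1$ (stem 43); the class you must kill, $\beta^2 b = b_4v_2\beta^2\alpha_1$, lives in stem 47 and is a $d_4$-\emph{cycle}, since $d_4(b_4v_2\beta^2\alpha_1)\,\dot{=}\,\beta^4\alpha_1^2=0$. (Your variant ``$d_4(b_4v_2\beta^2\alpha_1)\,\dot{=}\,\beta^4\alpha_1$'' makes the same slip, and also conflates supporting a differential with being hit by one.) What actually kills $b_4v_2\beta^2\alpha_1$ is the $d_3$-differential $d_3(v_2^3)\,\dot{=}\,b_4v_2\beta^2\alpha_1$ of Proposition \ref{prop: d3 v2cubed = b4v2beta2alpha1}. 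Your fallback --- that $\beta^5$ vanishes because everything in ``its stem and higher filtration on $E_\infty$'' is accounted for by $d_2,d_3,d_4$ --- is circular as stated: $\beta^5$ sits in stem 50, is a cycle for all differentials (it is a power of the permanent cycle $\beta$), and is hit by nothing before $E_6$, so the vanishing of stem 50 on $E_\infty$ is exactly equivalent to the $d_6$ you are trying to establish. The non-circular repair of your regrouping is: after $d_3(v_2^3)$, the entire 47-stem of $E_4$ is zero, so $\pi_{47}\tmf=0$, hence the homotopy class $\beta^2 b$ is zero outright (not merely of higher filtration), and $\beta^5=(\beta^2 b)\alpha_1=0$. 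The paper avoids all of this by regrouping the other way: $\beta^5=\beta^2(b\alpha_1)=b\cdot(\beta^2\alpha_1)$, and $\beta^2\alpha_1=0$ because it lies in $\pi_{23}\tmf=0$ (its only potential detecting class having been killed by $d_4(b_4v_2)\,\dot{=}\,\beta^2\alpha_1$); that one-line regrouping is the entire content of the paper's proof.
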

\begin{proof}
	Using the multiplicative extension of the previous proposition, we have 
	\[
	\beta^5 = \beta^2\beta^3 = \beta^2b\alpha_1.
	\]
	Since $\beta^2\alpha_1=0$, we have that $\beta^5=0$ in $\pi_*\tmf$. This forces the claimed differential.
\end{proof}

Thus far, we have only produced higher Adams differentials on generators in the submodules $M_0$ and $M_1$ of $M$ (see Remark \ref{rmk: partition of M}). There remains the summand $M_2$. Multiplication by $v_2^3$ propagates the differential from Proposition \ref{prop: d3 v2cubed = b4v2beta2alpha1} to a differential
\[
d_3(v_2^6) \,\dot{=}\, v_2^4\beta^2\alpha_1 b_4.
\]
Unfortunately, as in Remark \ref{rmk: no d4 differential on b4v2^4}, we cannot multiply by $v_2^3$ to infer differentials on $v_2^7b_4$ or $\alpha_1 v_2^7b_4$. However, \cite[Theorem 1.1]{Moss_1970} tells us that we have a Leibniz type rule for differentials on Toda brackets. We will use this to derive the desired differentials. 

\begin{lem}\label{lem: b_4v_2^4 pc}
	The class $b_4v_2^4$ is a permanent cycle.
\end{lem}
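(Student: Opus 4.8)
The plan is to show directly that $b_4v_2^4$ (which sits in stem $72$, filtration $4$) supports no nonzero Adams differential, by first clearing the low differentials and then showing that the \emph{only} class available as a target is already occupied. First I would check that $b_4v_2^4$ survives to $E_4$. Since $b_4v_2^4 = v_2^3\cdot(b_4v_2)$ is a product of the $d_2$-cycles $v_2^3$ and $b_4v_2$, it is a $d_2$-cycle. For $d_3$, recall $d_3(v_2^3)\,\dot{=}\,b_4v_2\beta^2\alpha_1$ (Proposition \ref{prop: d3 v2cubed = b4v2beta2alpha1}) while $d_3(b_4v_2)=0$, because $b_4v_2$ supports a $d_4$ by Proposition \ref{prop: d4 b4v2}. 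The Leibniz rule then gives
\[
d_3(b_4v_2^4)\,\dot{=}\,(b_4v_2\beta^2\alpha_1)(b_4v_2)=b_4^2v_2^2\beta^2\alpha_1=0
\]
since $b_4^2=0$. Thus $b_4v_2^4$ reaches $E_4$, and any nonzero $d_r$ with $r\geq 4$ must land in stem $71$.

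Next I would pin down stem $71$ on $E_3$ using Proposition \ref{prop: Adams E3}. Pattern $2'$ is concentrated in even stems, so every odd-degree class comes from Pattern $1'$, where on a generator $g\in\{v_2^{3j},v_2^{3j+1}b_4\}$ the odd classes are exactly the $g\alpha_1\beta^\ell$. A short degree count (solving $\deg(g)+3+10\ell=71$) shows that the only such class in stem $71$ is $x:=v_2^3\beta^2\alpha_1$, lying in filtration $8$. In particular the unique potential target of a differential out of $b_4v_2^4$ is the class $x$, hit by a hypothetical $d_4$ — precisely the "expected'' differential flagged in Remark \ref{rmk: no d4 differential on b4v2^4}.

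The key step is to show that $x$ is never the target of a differential, because it \emph{supports} one. By Corollary \ref{cor: d6 differential}, the class $v_2^3\alpha_1$ survives to $E_6$ with $d_6(v_2^3\alpha_1)\,\dot{=}\,\beta^5$. Since $\beta$ is a permanent cycle, multiplication by $\beta^2$ commutes with differentials, so on $E_6$ we have $d_6(x)=d_6(\beta^2\cdot v_2^3\alpha_1)\,\dot{=}\,\beta^2\beta^5=\beta^7$. Now $\beta^7$ (stem $70$, filtration $14$) is nonzero on $E_3$, and the only class that could map to it is a $d_6$ emanating from stem $71$, filtration $8$, i.e.\ from $x$ itself; hence $\beta^7$ survives to $E_6$ as a nonzero class and $d_6(x)\neq 0$. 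A class that supports a nonzero $d_6$ is neither a $d_6$-boundary nor was it killed on an earlier page, so $x$ is never hit.

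Finally I would assemble the conclusion: because $x$ is the unique nonzero class in stem $71$ on every page and is never a target, the differential $d_4(b_4v_2^4)$ cannot equal $x$ and so vanishes, while for $r\geq 5$ the target group in stem $71$, filtration $4+r\geq 9$, is zero. Combined with the vanishing of $d_2$ and $d_3$, this proves $b_4v_2^4$ is a permanent cycle. The main obstacle is the third paragraph: the heart of the argument is recognizing that the candidate target $v_2^3\beta^2\alpha_1$ is itself a $d_6$-source (via $d_6(v_2^3\alpha_1)\,\dot{=}\,\beta^5$ and multiplicativity in $\beta$), which is exactly what prevents the naive $v_2^3$-multiple of the differential $d_4(b_4v_2)\,\dot{=}\,\beta^2\alpha_1$ from occurring.
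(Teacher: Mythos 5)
Your proof is correct and takes essentially the same route as the paper: the paper's own proof likewise notes that the only possible differential on $b_4v_2^4$ is a $d_4$ hitting $v_2^3\beta^2\alpha_1$ (misprinted there as $v_2^3\beta\alpha_1$) and rules it out because that class supports a $d_6$-differential coming from Corollary \ref{cor: d6 differential} and $\beta$-linearity. Your write-up merely makes explicit what the paper leaves implicit --- the Leibniz-rule vanishing of $d_2$ and $d_3$, the enumeration showing $v_2^3\beta^2\alpha_1$ is the unique class in stem $71$, and the check that $\beta^7$ survives nonzero to $E_6$ so that the $d_6$ is genuinely nonzero and its source cannot have been killed earlier.
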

\begin{proof}
	The only possible differential that $b_4v_2^4$ could have supported was a $d_4$-differential to $v_2^3\beta\alpha_1$. But we have already shown that this class supports a $d_6$-differential in Corollary \ref{cor: d6 differential}.
\end{proof}

\begin{prop}
	There is the following Massey product on the $E_4$-page of the Adams spectral sequence for $\tmf$,
	\[
	v_2^7b_4 = \langle \beta^2\alpha_1, v_2^4b_4, v_2b_4\rangle_{E_4}
	\]
	Consequently, we have that 
	\[
	d_4(v_2^7b_4) \,\dot{=}\, \langle \beta^2\alpha_1, v_2^4b_4,\beta^2\alpha_1\rangle \,\dot{=}\, v_2^6\beta^2\alpha_1.
	\]
\end{prop}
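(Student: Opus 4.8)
The plan is to realize $v_2^7 b_4$ as a three-fold Massey product whose entries are $\beta^2\alpha_1$, $v_2^4 b_4$, and $v_2 b_4$, and then to differentiate that bracket by the Leibniz rule of \cite[Theorem 1.1]{Moss_1970}. The essential point is that all three entries are $d_3$-cycles: the first is a permanent cycle since $\beta$ and $\alpha_1$ are, the second is a permanent cycle by Lemma \ref{lem: b_4v_2^4 pc}, and the third survives to $E_4$ by Proposition \ref{prop: d4 b4v2}. Hence the bracket $\langle \beta^2\alpha_1, v_2^4 b_4, v_2 b_4\rangle$ can be formed in the differential graded algebra $(E_3, d_3)$, with value landing on $E_4$; this is what the subscript in $\langle\,\cdot\,\rangle_{E_4}$ records.

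First I would check the bracket is defined, i.e. that both outer products vanish on $E_4$. For the pair $(v_2^4 b_4, v_2 b_4)$ this is immediate, since $v_2^4 b_4\cdot v_2 b_4 = v_2^5 b_4^2 = 0$ because $b_4$ is exterior. For the pair $(\beta^2\alpha_1, v_2^4 b_4)$ I would invoke the differential $d_3(v_2^6)\,\dot{=}\,v_2^4\beta^2\alpha_1 b_4$ recorded just above the statement: this exhibits $\beta^2\alpha_1\cdot v_2^4 b_4$ as a $d_3$-boundary, so it is zero on $E_4$. Reading off the bracket from the defining system $P = v_2^6$ (with $d_3(P)\,\dot{=}\,\beta^2\alpha_1\cdot v_2^4 b_4$) and $Q=0$ (with $d_3(Q)=v_2^4 b_4\cdot v_2 b_4$) gives
\[
\langle \beta^2\alpha_1, v_2^4 b_4, v_2 b_4\rangle\,\dot{=}\, P\cdot v_2 b_4 = v_2^7 b_4
\]
on $E_4$. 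I would then confirm the indeterminacy is trivial: a contribution of the form $z\cdot v_2 b_4$ would need $z$ in stem $96$ and filtration $6$, i.e. a multiple of $v_2^6$, but $v_2^6$ does not survive to $E_4$, and the $\beta^2\alpha_1\cdot(-)$ contributions are excluded by inspection of the chart in the relevant bidegree.

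Next I would differentiate. Because $\beta^2\alpha_1$ and $v_2^4 b_4$ are permanent cycles while $d_4(v_2 b_4)\,\dot{=}\,\beta^2\alpha_1$ by Proposition \ref{prop: d4 b4v2}, the Leibniz rule of \cite[Theorem 1.1]{Moss_1970} applies to this surviving bracket, the two terms differentiating the first and second entries being zero, and yields
\[
d_4(v_2^7 b_4)\,\dot{=}\, \langle \beta^2\alpha_1, v_2^4 b_4, \beta^2\alpha_1\rangle.
\]
To legitimately invoke Moss's theorem I must verify there are no crossing differentials in this range; after $E_4$ the only higher differentials are the $d_6$'s of Corollary \ref{cor: d6 differential} and their $v_2^3$-translates, and a degree check shows none of these cross the bidegrees of $P=v_2^6$ and $Q$, so the hypothesis holds.

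Finally I would evaluate $\langle \beta^2\alpha_1, v_2^4 b_4, \beta^2\alpha_1\rangle$. Using the same relation $d_3(v_2^6)\,\dot{=}\,\beta^2\alpha_1\cdot v_2^4 b_4$ on both sides of this symmetric bracket, one takes the defining system $P=P'=v_2^6$, so the bracket evaluates to $\overline{\beta^2\alpha_1}\cdot v_2^6 \pm v_2^6\cdot \beta^2\alpha_1$; a sign computation with the conjugation conventions of \cite[Appendix 4]{greenbook} shows the two terms \emph{add}, producing $v_2^6\beta^2\alpha_1$ with coefficient reducing to $2\equiv -1 \pmod 3$, a unit. Thus $\langle \beta^2\alpha_1, v_2^4 b_4, \beta^2\alpha_1\rangle\,\dot{=}\, v_2^6\beta^2\alpha_1$, and the degrees $(t-s,s)=(119,11)$ agree with those of $d_4(v_2^7 b_4)$, completing the argument. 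The main obstacle is the page bookkeeping together with the sign check: one must make precise that a bracket built from a $d_3$-boundary relation has value on $E_4$ and that Moss's Leibniz rule genuinely yields a $d_4$, and one must track the signs at the odd prime $3$ carefully so that the two terms in the final bracket do not cancel.
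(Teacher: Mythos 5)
Your proposal is correct and is essentially the paper's own argument: the bracket is formed from the defining system given by $d_3(v_2^6)\,\dot{=}\,\beta^2\alpha_1 v_2^4 b_4$, checked to have no indeterminacy, and then differentiated via the Leibniz rule of Moss's Theorem 1.1 using that $\beta^2\alpha_1$ and $v_2^4b_4$ are permanent cycles. The only difference is that you spell out details the paper leaves implicit (definedness of the outer products, the crossing-differential check, and the sign verification that the two terms of the symmetric bracket $\langle \beta^2\alpha_1, v_2^4b_4, \beta^2\alpha_1\rangle$ add to a unit multiple of $v_2^6\beta^2\alpha_1$ rather than cancel mod 3), all of which are sound.
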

\begin{proof}
	A defining system for the first Massey product arises from the differential $d_3(v_2^6) = \beta^2\alpha_1v_2^4b_4$. There is no indeterminacy, hence we have an equality. Since $\beta^2\alpha_1$ and $v_2^4b_4$ are permanent cycles, the differential is an immediate consequence of \cite[Theorem 1.1]{Moss_1970}. 
\end{proof}

We would like to derive a $d_6$-differential on $v_2^7b_4$ to $\beta^5v_2^4b_4$. The argument will be similar to the one found in Corollary \ref{cor: d6 differential}. 

\begin{prop}
	The class $b':= v_2^6\alpha_1$ is given on the $E_4$-page by the Massey product 
	\[
	b' = \langle v_2^4b_4\beta^2, \alpha_1, \alpha_1\rangle.
	\]
	By Moss' convergence theorem this class survives to a class $b'$ in $\pi_*\tmf$ given by the corresponding Toda bracket. Consequently, there is the hidden extension
	\[
	b'\cdot \alpha_1 \,\dot{=}\, \beta^3v_2^4b_4
	\]
	in $\pi_*\tmf$. 
\end{prop}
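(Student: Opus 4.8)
The plan is to imitate the proofs of Lemma~\ref{lem: toda bracket for b} and Proposition~\ref{prop: hidden extension b alpha1 = beta cubed}, replacing the differential $d_4(b_4v_2)\,\dot{=}\,\beta^2\alpha_1$ used there by the $d_3$-differential $d_3(v_2^6)\,\dot{=}\,v_2^4b_4\beta^2\alpha_1$ recorded just above (propagated from Proposition~\ref{prop: d3 v2cubed = b4v2beta2alpha1} by multiplication by $v_2^3$). The argument splits into three steps: identify the Massey product on $E_4$, push it into homotopy by Moss' Convergence Theorem, and then extract the hidden extension by juggling.

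First I would establish the Massey product on the $E_4$-page. Since $\alpha_1^2=0$ and, on $E_4$, the product $v_2^4b_4\beta^2\cdot\alpha_1=0$ (it is the target of $d_3(v_2^6)$), the triple bracket $\langle v_2^4b_4\beta^2,\alpha_1,\alpha_1\rangle_{E_4}$ is defined. The differential $d_3(v_2^6)\,\dot{=}\,v_2^4b_4\beta^2\alpha_1$ furnishes a defining system: taking $v_2^6$ as the nullhomotopy of $v_2^4b_4\beta^2\cdot\alpha_1$ and $0$ as the (trivial) nullhomotopy of $\alpha_1^2$, the associated representative is $\pm v_2^6\alpha_1=\pm b'$. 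A short degree check that the summands $(v_2^4b_4\beta^2)\cdot E_4$ and $E_4\cdot\alpha_1$ of the indeterminacy vanish in the relevant bidegree then yields the equality $b'=\langle v_2^4b_4\beta^2,\alpha_1,\alpha_1\rangle$ on $E_4$.

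Next I would pass to $\pi_*\tmf$ via Moss' Convergence Theorem (\cite{Moss_1970}, \cite[Theorem 3.1.1]{StableStems}). The three bracket entries are permanent cycles: $\alpha_1$ and $\beta$ are permanent cycles for degree reasons, and $v_2^4b_4$ is a permanent cycle by Lemma~\ref{lem: b_4v_2^4 pc}, so $v_2^4b_4\beta^2$ is too. The remaining hypothesis is the absence of crossing differentials near the relevant stems ($99$–$102$), which—exactly as in Lemma~\ref{lem: toda bracket for b}, where the analogous bracket sits in the $27$–$30$ range one $\Delta^3$-period below—should follow from inspecting the $E_r$-page for $r\ge 4$, made tractable by the sparse $v_2^3$-periodic structure of the $E_3$-term. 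Granting this, Moss' theorem shows $b'$ is a permanent cycle detecting the Toda bracket $\langle v_2^4b_4\beta^2,\alpha_1,\alpha_1\rangle$ in $\pi_*\tmf$.

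Finally, the hidden extension is formal, as in Proposition~\ref{prop: hidden extension b alpha1 = beta cubed}: using $\beta=\langle\alpha_1,\alpha_1,\alpha_1\rangle$ together with the first juggling lemma (\cite[Appendix~1]{greenbook}),
\[
b'\cdot\alpha_1 \,\dot{=}\, \langle v_2^4b_4\beta^2,\alpha_1,\alpha_1\rangle\cdot\alpha_1 \,\dot{=}\, v_2^4b_4\beta^2\cdot\langle\alpha_1,\alpha_1,\alpha_1\rangle \,\dot{=}\, \beta^3v_2^4b_4.
\]
This is genuinely hidden because the naive Adams product $v_2^6\alpha_1\cdot\alpha_1=v_2^6\alpha_1^2=0$ vanishes on $E_\infty$, while $\beta^3v_2^4b_4$ lies in strictly higher filtration; note also that one cannot obtain this by simply $\Delta^3$-multiplying Proposition~\ref{prop: hidden extension b alpha1 = beta cubed}, since $b_4^2=0$ forces the relevant product into the wrong filtration. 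I expect the main obstacle to be the Moss-convergence bookkeeping in the second step—verifying the no-crossing-differentials condition and that the bracket is strictly defined in homotopy—rather than the juggling, which is immediate once the bracket is in hand.
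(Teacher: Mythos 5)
Your proposal is correct and follows essentially the same route as the paper: the paper's proof says the argument is ``completely analogous'' to Lemma~\ref{lem: toda bracket for b}, i.e.\ use $d_3(v_2^6)\,\dot{=}\,v_2^4b_4\beta^2\alpha_1$ as a defining system for the Massey product on $E_4$, invoke Moss' convergence theorem, and extract the hidden extension by juggling with $\beta=\langle\alpha_1,\alpha_1,\alpha_1\rangle$, exactly as you do. The only difference is that the paper also records an alternative derivation, juggling Toda brackets directly in $\pi_*\tmf$ via $v_2^4b_4\cdot b\subseteq\langle v_2^4b_4\beta^2,\alpha_1,\alpha_1\rangle$ (using Lemma~\ref{lem: b_4v_2^4 pc}) and checking the indeterminacy vanishes since $\pi_7\tmf=\pi_{83}\tmf=0$, which sidesteps the Moss-convergence bookkeeping you flag as the main obstacle.
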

\begin{proof}
	The argument is completely analogous to the one in Lemma \ref{lem: toda bracket for b}. Alternatively, we can derive this from Lemma \ref{lem: b_4v_2^4 pc} below by using a juggling theorem for Toda brackets
	\[
	v_2^4b_4\cdot b = v_2^4b_4\cdot \langle \beta^2, \alpha_1, \alpha_1\rangle\subseteq \langle v_2^4b_4\beta^2, \alpha_1, \alpha_1\rangle. 
	\]
	Keep in mind these are Toda brackets in $\pi_*\tmf$. We can do this since $v_2^4b_4$ is a permanent cycle. The Toda bracket on the right hand side has indeterminacy in (see \cite[Proposition 5.7.2(b)]{Kochman:1996aa}) 
	\[
	v_2^4b_4\beta^2\cdot \pi_{7}\tmf +  \pi_{83}\tmf\cdot \alpha_1.
	\]
	It is seen on the $E_4$-page that $\pi_7\tmf = \pi_{83}\tmf =0$. Thus there is no indeterminacy. 
\end{proof}

\begin{prop}
	The class $\beta^5v_2^4b_4$ is 0 in $\pi_*\tmf$. Thus there is the $d_6$-differential
	\[
	d_6(\alpha_1v_2^7b_4) = \beta^5 v_2^4b_4
	\]
\end{prop}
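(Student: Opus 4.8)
The plan is to follow the argument of Corollary~\ref{cor: d6 differential} essentially verbatim, with the class $b'$ in place of $b$. The immediately preceding proposition has already supplied the one nontrivial ingredient, namely the hidden multiplicative extension
\[
b'\cdot\alpha_1\,\dot{=}\,\beta^3v_2^4b_4
\]
in $\pi_*\tmf$, where $b'$ is the homotopy class detected by $v_2^6\alpha_1$. So the first step is to factor $\beta^5v_2^4b_4=\beta^2\cdot(\beta^3v_2^4b_4)$ and substitute this extension, giving
\[
\beta^5v_2^4b_4\,\dot{=}\,\beta^2\cdot(b'\alpha_1)\,\dot{=}\,(\beta^2\alpha_1)\cdot b'.
\]
Since $\pi_*\tmf$ is a (graded-)commutative ring and we are only tracking products up to a unit, the rebracketing is harmless. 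The key input is that $\beta^2\alpha_1=0$ in $\pi_*\tmf$; this is exactly the vanishing already exploited in Corollary~\ref{cor: d6 differential}, and it holds because $\beta^2\alpha_1$ is the target of the $d_4$-differential $d_4(b_4v_2)\,\dot{=}\,\beta^2\alpha_1$ established earlier (equivalently, because $\pi_{23}\tmf=0$). Hence $\beta^5v_2^4b_4=0$ in $\pi_*\tmf$.

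It then remains to convert this vanishing homotopy class into a differential. I would first record degrees: $\beta^5v_2^4b_4$ sits in stem $122$ and Adams filtration $14$, while $\alpha_1v_2^7b_4$ sits in stem $123$ and Adams filtration $8$, so a differential $\alpha_1v_2^7b_4\to\beta^5v_2^4b_4$ is precisely a $d_6$. One checks from Proposition~\ref{prop: Adams E3} (and its $v_2^9$-periodic repackaging into the module $M$) that $\beta^5v_2^4b_4$ is a nonzero class that is neither the source nor a previously established target of any $d_3$, $d_4$, or $d_5$, and therefore survives to the $E_6$-page. Since it is zero in $\pi_*\tmf$, it must be killed by some differential, and a tri-degree count in Adams indexing shows that the only class able to map onto it is $\alpha_1v_2^7b_4$ via a $d_6$. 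This forces the asserted differential.

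The only delicate point is this last bookkeeping step: one must verify both that $\beta^5v_2^4b_4$ is not hit by any shorter differential and that no class other than $\alpha_1v_2^7b_4$ lies in the correct tri-degree to support a differential hitting it. Both facts follow from the sparseness of the $E_3$-term already tabulated in Proposition~\ref{prop: Adams E3}, so the main obstacle is not conceptual but is simply the careful reading of the chart in the relevant stems; given the preceding proposition, no new homotopy-theoretic input is required.
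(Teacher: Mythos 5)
Your proof is correct and is exactly the route the paper intends: the paper itself omits the proof of this proposition (stating only that it is the analog of Corollary~\ref{cor: d6 differential} in the summand $M_2$), and your argument---substituting the hidden extension $b'\cdot\alpha_1\,\dot{=}\,\beta^3v_2^4b_4$, rebracketing to $(\beta^2\alpha_1)\cdot b'$, invoking $\beta^2\alpha_1=0$ from the $d_4$-differential on $b_4v_2$, and then forcing the $d_6$ by sparseness---is precisely that analogy spelled out. Your bookkeeping also checks out: on $E_3$, stem $123$ contains only $\alpha_1v_2^7b_4$ (filtration $8$) and $\beta^{12}\alpha_1$ (filtration $25$), and $\beta^5v_2^4b_4$ is a product of permanent cycles, so the asserted $d_6$ is the only way for it to die.
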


Thus we have analogs of the differentials in Proposition \ref{prop: d4 b4v2} and Corollary \ref{cor: d6 differential} in the summand $M_2$ of $M$. 

%



Now we move on to showing $v_2^9$ is a permanent cycle. 

\begin{prop}\label{prop: v_2^9 is a pc}
	The class $v_2^9$ is a permanent cycle. 
\end{prop}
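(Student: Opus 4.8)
The plan is to show $v_2^9$ is a permanent cycle by leveraging the $\Delta^3$-periodicity of $\pi_*\tmf$ together with the structural decomposition \eqref{eq: Delta cubed decomposition}, while carefully accounting for the fact that $v_2^9$ detects $\Delta^6$ and that the analogous lower-power classes $v_2^3$ and $v_2^6$ do \emph{support} higher differentials. The key point is that although $v_2^3$ and $v_2^6$ support $d_3$-differentials (hitting $b_4v_2\beta^2\alpha_1$ and $v_2^4\beta^2\alpha_1 b_4$ respectively), the class $v_2^9$ must behave differently because $\Delta^6 = (\Delta^3)^2$ lives in the periodicity range and $\Delta^3$ is a genuine nonzero periodic element of $\pi_*\tmf$.

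\medskip

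\emph{First}, I would identify the only possible targets for a differential on $v_2^9$. Since $v_2^9$ lies in stem $144$ and in filtration $0$ (or the appropriate even filtration inherited from the $E_3$-term), its potential targets are the odd-degree classes in nearby stems of higher filtration. By Proposition~\ref{prop: Adams E3}, the only odd-degree classes on the $E_3$-term are of the form $v_2^{3k}\beta^\ell\alpha_1$ or $v_2^{3k}b_4\beta^\ell\alpha_1$, so I would enumerate which of these survive to the relevant later page and lie in stem $143$. \emph{Second}, by the decomposition \eqref{eq: Delta cubed decomposition}, we have $E_3(\tmf)\cong \bigoplus_{k\geq 0} M\cdot\{v_2^{9k}\}$ as modules over $\Ext_{A(1)_*}(\F_3)$, and the factor $v_2^9$ acts as the periodicity generator detecting $\Delta^6$. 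The cleanest route is to argue that multiplication by $v_2^9$ is an isomorphism of the spectral sequence from page $E_3$ onward that is compatible with differentials, reflecting the $\Delta^6$-periodicity of $\pi_*\tmf$; since the differentials on $M_0, M_1, M_2$ have already been completely determined, and since $v_2^9$ itself corresponds to the periodicity operator rather than a torsion class, it cannot support a differential without violating periodicity.

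\medskip

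\emph{Alternatively}, and perhaps more in the spirit of the ``first principles'' arguments used throughout this section, I would appeal directly to the structure of $\pi_*\tmf$: the element $\Delta^6$ is a nonzero torsion-free (in fact periodicity-generating) class, detected by a class in the $0$-line $v_0$-tower in stem $144$, and $v_2^9$ is the unique class available to detect it (by the same rational/$v_0$-tower reasoning used in Proposition~\ref{prop: v_1^3 detects c6}). A class detecting a nonzero permanent homotopy element cannot support a nonzero differential, so $v_2^9$ must be a permanent cycle. I would also rule out $v_2^9$ being the \emph{target} of a differential by noting that the only classes in stem $144$ and lower filtration that could map to it are excluded by the already-established differential pattern.

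\medskip

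\emph{The main obstacle} I anticipate is cleanly justifying the claim that $v_2^9$ detects the periodicity class $\Delta^6$ \emph{before} the later differentials have been fully resolved — there is a mild circularity risk, since periodicity of $\pi_*\tmf$ is a statement about the abutment while $v_2^9$ being a permanent cycle is a statement on the $E_r$-pages. The careful move is to use the known $72$-periodicity (equivalently $\Delta^3$-periodicity) of $\pi_*\tmf$ as an \emph{external input} (cited from \cite{Henriques}), together with the observation that all torsion in $\pi_*\tmf$ is concentrated in the stems listed earlier ($3,10,13,\dots \bmod 72$), so that stem $144\equiv 0 \bmod 72$ contains no torsion and in particular no room for $v_2^9$ to be hit or to support a differential landing in a torsion class. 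This degree/periodicity bookkeeping, rather than any delicate algebra, is where the real care is required.
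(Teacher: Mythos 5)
Your opening step --- enumerate the odd-degree classes of Proposition \ref{prop: Adams E3} lying in stem $143$ --- is exactly how the paper's proof begins, but you never carry it out, and the three arguments you substitute for it each have a genuine gap. (1) The claim that ``multiplication by $v_2^9$ is an isomorphism of the spectral sequence from $E_3$ onward compatible with differentials'' is circular: the decomposition \eqref{eq: Delta cubed decomposition} is only an isomorphism of $\Ext_{A(1)_*}(\F_3)$-modules, and by the Leibniz rule the differentials commute with multiplication by $v_2^9$ precisely when $v_2^9$ is a permanent cycle, which is what you are trying to prove. (2) The detection argument is both factually and logically off: $v_2^9$ is far from the unique $v_0$-tower in stem $144$ (at $E_3$ there are seven, namely $c_6^{12}$, $c_6^8v_2^3$, $c_6^4v_2^6$, $v_2^9$, $c_6^{10}v_2b_4$, $c_6^6v_2^4b_4$, $c_6^2v_2^7b_4$), and one cannot assert that a class detects $\Delta^6$ \emph{before} knowing it survives --- detection is a statement about $E_\infty$, so this reasoning runs backwards. (3) The torsion-concentration input does not close the gap you yourself flagged: a differential on $v_2^9$ lands in stem $143$, where $\pi_{143}\tmf=0$, so every class there must die in any case. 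A differential $d_3(v_2^9)\,\dot{=}\,v_2^7b_4\beta^2\alpha_1$ would be perfectly consistent with $\pi_{143}\tmf=0$ and with $\pi_{144}\tmf$ being torsion-free: the potential targets in stem $143$ are all $v_0$-torsion, so such a differential would only truncate the bottom of the $v_0$-tower on $v_2^9$, leaving $\Delta^6$ to be detected by $v_0v_2^9$ instead. Periodicity and torsion bookkeeping alone cannot distinguish these two scenarios.

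What is actually needed, and what the paper does, is to show that both classes in stem $143$ are unavailable as targets. The class $\beta^{14}\alpha_1$ is hit by an earlier differential off $\beta^{12}v_2b_4$ (propagated from Proposition \ref{prop: d4 b4v2}), so it cannot absorb a long differential from $v_2^9$. The class $v_2^7b_4\beta^2\alpha_1$ must survive to $E_6$ in order to support the $d_6$ killing $v_2^4b_4\beta^7$: that target cannot die any other way, since it supports no differential itself ($v_2^4b_4$ is a permanent cycle by Lemma \ref{lem: b_4v_2^4 pc} and differentials are $\beta$-linear) and nothing else in stem $143$ has low enough filtration to reach it; if it never died, convergence would be violated. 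Hence $v_2^7b_4\beta^2\alpha_1$ dies as a source, not as a target of a differential from $v_2^9$. This forcing argument is the real content of the proposition; your proposal gestures at the correct first move but replaces the decisive step with heuristics that do not rule out the differential.
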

\begin{proof}

Note that $v_2^9$ is in stem 144. Thus, any higher Adams differential supported by $v_2^9$ must live in odd degree. The only odd degree elements in the $E_3$-term live in Pattern 1' in Proposition \ref{prop: Adams E3}. In fact, since the generators of these patterns, $v_2^{3j}$ or $v_2^{3j+1}b_4$, are in even stems, it follows that the only possible targets of a differential on $v_2^9$ are of the form $v_2^{3j}\beta^k\alpha_1$ or $v_2^{3j+1}b_4\beta^k\alpha_1$. By examining the degrees of these elements, we see that the only ones in degree 143 are $v_2^7b_4\beta^2\alpha_1$ or $\beta^{14}\alpha_1$. 

The first option could be the target of a $d_3$-differential, but it follows from Proposition \ref{prop: d3 v2cubed = b4v2beta2alpha1}, Lemma \ref{lem: b_4v_2^4 pc}, and multiplicativity that $v_2^7b_4\beta^2\alpha_1$ supports a $d_6$-differential targeting $v_2^4b_4 \beta^7$.  If, however, $v_2^4b_4\beta^7$ where hit by a $d_r$-differential with $3\leq r<6$, then we would not be able to exclude $v_2^7b_4\beta^2\alpha_1$ as a target of a $d_3$-differential. However, this cannot happen since the only elements in stem 143 on the $E_3$-page are $v_2^7b_4\beta^2\alpha_1$ and $\beta^14\alpha_1$. 

The other option, $\beta^{14}\alpha_1$ could be the target of a $d_{20}$-differential, but the target is hit by an earlier $d_3$-differential supported by $\beta^{12}v_2b_4$. Thus $v_2^9$ is a permanent cycle. 

\end{proof}

We can now derive that the Adams spectral sequence for $\tmf$ collapses at $E_7$. Towards this end, let $\overline{M}$ denote subquotient obtained from $M$ by incorporating the $d_3$ to $d_6$-differentials. Then we have the decomposition

\begin{equation}\label{eq: Delta cubed decomposition on E7}
	E_7(\tmf)\cong \bigoplus_{k\geq 0}\overline{M}\cdot\{v_2^{9k}\}
\end{equation}

Below, in Figure \ref{fig: Mbar}, is a depiction of $\overline{M}$. The reader should note the chart for $\overline{M}$ in stems $0\leq t-s< 72$ and in stems $72\leq t-s<144$ are identical up to a shift in Adams filtration. This is explained by the fact that $v_2^4b_4$ will detect the class $\Delta^3\in \pi_{72}\tmf$ (Corollary \ref{cor: detecting powers of Delta}), which is the famous periodicity generator.


\begin{figure}
	\centering
	\begin{subfigure}{0.49\textwidth}
		\includegraphics[angle=90, height=\textheight]{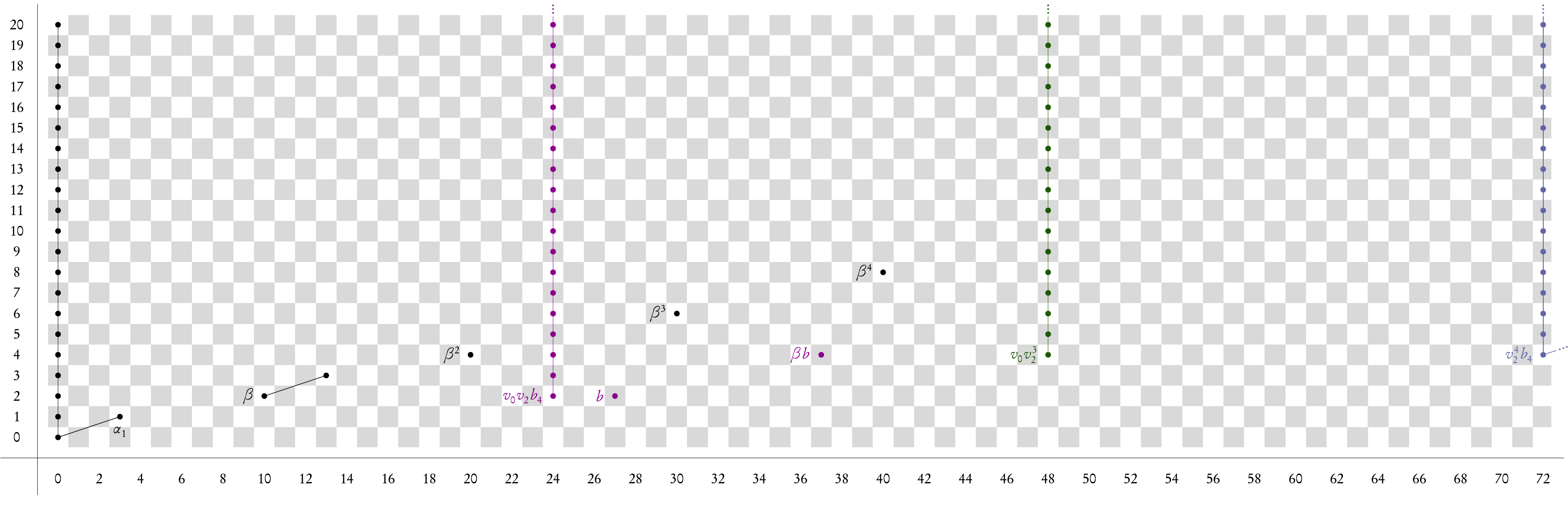}
	\end{subfigure}
	\begin{subfigure}{0.49\textwidth}
		\includegraphics[angle=90, height=\textheight]{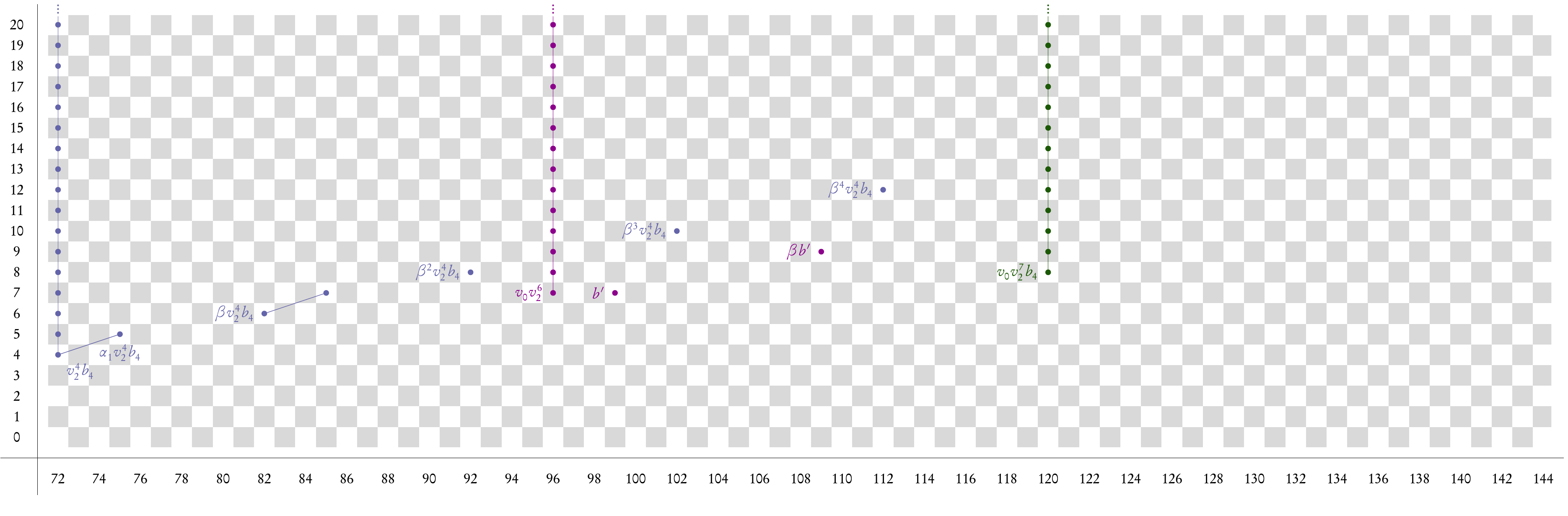}
	\end{subfigure}
	\caption{Depiction of $\overline{M}$ modulo elements of Pattern 2'}\label{fig: Mbar}
\end{figure}

\begin{prop}
	The submodule $\overline{M}\cdot\{1\}$ consists of permanent cycles. Consequently, the Adams spectral sequence for $\tmf$ collapses at $E_7$.
\end{prop}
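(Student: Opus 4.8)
The plan is to reduce the entire claim to a finite check on module generators and then propagate it by multiplying with $v_2^9$. Since all higher Adams differentials are linear over $\Ext_{A(1)_*}(\F_3)$, and the ring generators $v_0, \alpha_1, \alpha_2, \beta, c_6$ are permanent cycles, it suffices to prove that each $\Ext_{A(1)_*}(\F_3)$-module generator of $\overline{M}$ is a permanent cycle on the $E_7$-page. I would read these generators off the decomposition of $\overline{M}$ (Figure \ref{fig: Mbar}), organizing them as in Remark \ref{rmk: partition of M} into the Pattern $2'$ generators and the surviving Pattern $1'$ generators.

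First, the Pattern $2'$ generators are permanent cycles by Proposition \ref{prop: pattern 2' are pc}, so they contribute nothing further. Next, among the Pattern $1'$ generators $v_2^{3j}, v_2^{3j+1}b_4$ with $j=0,1,2$, the classes $v_2 b_4, v_2^3, v_2^6, v_2^7 b_4$ support the $d_3$, $d_4$, and $d_6$ differentials established above and hence do not survive to $E_7$; meanwhile the unit $1$ is trivially a permanent cycle, and $v_2^4 b_4$ is a permanent cycle by Lemma \ref{lem: b_4v_2^4 pc}.

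The subtle point, and what I expect to be the main obstacle, is that passing to the subquotient $\overline{M}$ creates extra module generators that are \emph{not} themselves products of permanent cycles: killing $v_2 b_4$ (resp. $v_2^3$, $v_2^6$, $v_2^7 b_4$) by a $d_r$-differential leaves behind the $v_0$-multiples $v_0^i v_2 b_4$, $v_0^i v_2^3$, etc. with $i\geq 1$, because $v_0\alpha_1 = 0$ forces the relevant differential on these multiples to vanish. These become new generators of $\overline{M}$ and must be checked directly. I would argue that each is a permanent cycle using the edge homomorphism of Theorem \ref{thm: edge homomorphism}: every such $v_0$-tower generator detects a torsion-free multiple of an integral modular form (for instance $v_0 v_2 b_4$ detects $3\Delta$ and $v_0 v_2^3$ detects $3\Delta^2$, both in the image of the edge map), and a class detecting a nonzero torsion-free homotopy element is automatically a permanent cycle. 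In the few remaining cases the extreme sparseness of $E_7$ in the relevant stems, together with the vanishing of $\pi_{t-s}\tmf$ in the adjacent odd stems, leaves no possible target. This reduces the whole step to a finite inspection of the chart.

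Finally, granting that $\overline{M}\cdot\{1\}$ consists of permanent cycles, the collapse is formal. By Proposition \ref{prop: v_2^9 is a pc} the class $v_2^9$ is a permanent cycle, so by multiplicativity every product $m\cdot v_2^{9k}$ with $m\in\overline{M}\cdot\{1\}$ a permanent cycle is again a permanent cycle. By the decomposition \eqref{eq: Delta cubed decomposition on E7}, every class of $E_7(\tmf)$ is an $\Ext_{A(1)_*}(\F_3)$-linear combination of such products, hence a permanent cycle. Therefore no nonzero differential $d_r$ with $r\geq 7$ can occur, and the Adams spectral sequence for $\tmf$ collapses at $E_7$.
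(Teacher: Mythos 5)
There is a genuine gap, in fact two, and they concern exactly the classes that carry the real content of the paper's proof. First, your enumeration of the ``extra'' generators created by passing to the subquotient $\overline{M}$ is incomplete. You correctly note that the $v_0$-multiples $v_0^i v_2 b_4$, $v_0^i v_2^3$, etc.\ survive because $v_0\alpha_1=0$, but the same mechanism with $\alpha_1^2=0$ in place of $v_0\alpha_1=0$ leaves behind the odd-degree classes $b=\alpha_1 v_2 b_4$, $\beta b$, $b'=\alpha_1 v_2^6$, and $\beta b'$: for instance $d_4(\alpha_1 v_2b_4)=\alpha_1\cdot\beta^2\alpha_1=0$, so $b$ survives to $E_7$ and is no longer a multiple of anything there. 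These are precisely the classes the paper works hardest on: being in odd stems ($27$ and $99$), they are the only indecomposables that could a priori support differentials into Pattern $2'$ (which is concentrated in even stems), and the paper rules this out by the congruence argument that $b,b'$ lie in stems $\equiv 3 \pmod 8$ while all of Pattern $2'$ lies in stems $\equiv 0$ or $4 \pmod 8$. Your proposal never identifies these classes, so the hardest cases are simply absent.

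Second, your main tool for the $v_0$-tower generators --- ``$v_0v_2b_4$ detects $3\Delta$, and a class detecting a torsion-free element is a permanent cycle'' --- is circular as stated: a class detects a homotopy element only \emph{after} one knows it survives, which is what is being proved. What Theorem \ref{thm: edge homomorphism} (or Corollary \ref{cor: rational homotopy of tmf}) actually gives is a count of infinite $v_0$-towers in $E_\infty$; since Adams differentials are $v_0$-linear, a differential off the bottom of a tower whose target is $v_0$-torsion would truncate the tower without changing this count, so the rank argument only shows that $v_0^i v_2 b_4$ survives for $i\gg 0$, not that $v_0 v_2 b_4$ itself does. (Your fallback appeal to ``$\pi_{t-s}\tmf=0$ in adjacent odd stems'' has the same defect: vanishing homotopy in stem $23$ is equally consistent with the stem-$23$ classes being killed \emph{by} a differential out of $v_0v_2b_4$.) The paper's argument here is both simpler and non-circular: these indecomposables sit in even stems, Pattern $2'$ is entirely in even stems, and chart inspection of $\overline{M}$ shows no elements of Pattern $1'$ can serve as targets, so there is nothing for them to hit. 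Your overall reduction (linearity over $\Ext_{A(1)_*}(\F_3)$, Proposition \ref{prop: pattern 2' are pc}, Lemma \ref{lem: b_4v_2^4 pc}, $v_2^9$-periodicity via Proposition \ref{prop: v_2^9 is a pc}, and the decomposition \eqref{eq: Delta cubed decomposition on E7}) matches the paper and is fine; it is the treatment of the leftover generators that needs to be replaced by parity and congruence arguments of the above kind.
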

\begin{proof}
		Since the Adams $E_7$-term is periodic on $v_2^9$, it is sufficient to check that there are no differentials supported by the indecomposable elements of $\overline{M}$. It is easily seen from Figures \ref{fig: Mbar} that the elements originating from Pattern 1' in the $E_3$-term cannot support $d_7$-differentials whose target is also an element originating from Pattern 1'. This leaves the possibility of the indecomposable classes supporting differentials into pattern 2'.
		
		Since Pattern 2' is concentrated in even degrees, this excludes the possibility of the indecomposable classes $v_0v_2b_4, v_0v_2^3, v_0v_2^6, v_0v_2^7b_4$ from supporting differentials; hence these classes are permanent cycles. The classes $b, \beta b, b'$ and $\beta b'$ may support differentials into Pattern 2'. Since the differentials are linear over $\beta$, we are reduced considering the classes $b$ and $b'$. 
		
		Observe that generators of Pattern 2' are all in stems congruent to 0 mod 8. On the other hand, $b$ and $b'$ are in stems 27 and 99 respectively. Observe that these are congruent to 3 modulo 8. Thus, the degree of a possible target of a differential supported by $b$ or $b'$ must lie in degree congruent to 2 mod 8. Since all of the elements of Pattern 2' are in stems congruent to 0 modulo 8, $b$ and $b'$ cannot support differentials into Pattern 2'. As we have already mentioned, $b$ and $b'$ also cannot support differentials into Pattern 1'. Thus $b$ and $b'$ are permanent cycles. 
		
		This shows that all of the elements of $\overline{M}$ are permanent cycles. Since $v_2^9$ is a periodicity generator for $E_7(\tmf)$, it follows from Proposition \ref{prop: v_2^9 is a pc} and the fact that the ASS for $\tmf$ is multiplicative that $E_7=E_\infty$. 
\end{proof}


We provide the Adams $E_3$-term along with all higher differentials as well as a chart for the $E_7 = E_\infty$-term below in Figures \ref{fig: Adams E_3 with differentials} and \ref{fig: Adams E_infty with differentials}

\begin{figure}
\centering
\begin{subfigure}{0.49\textwidth}
	\includegraphics[angle=90,height=\textheight]{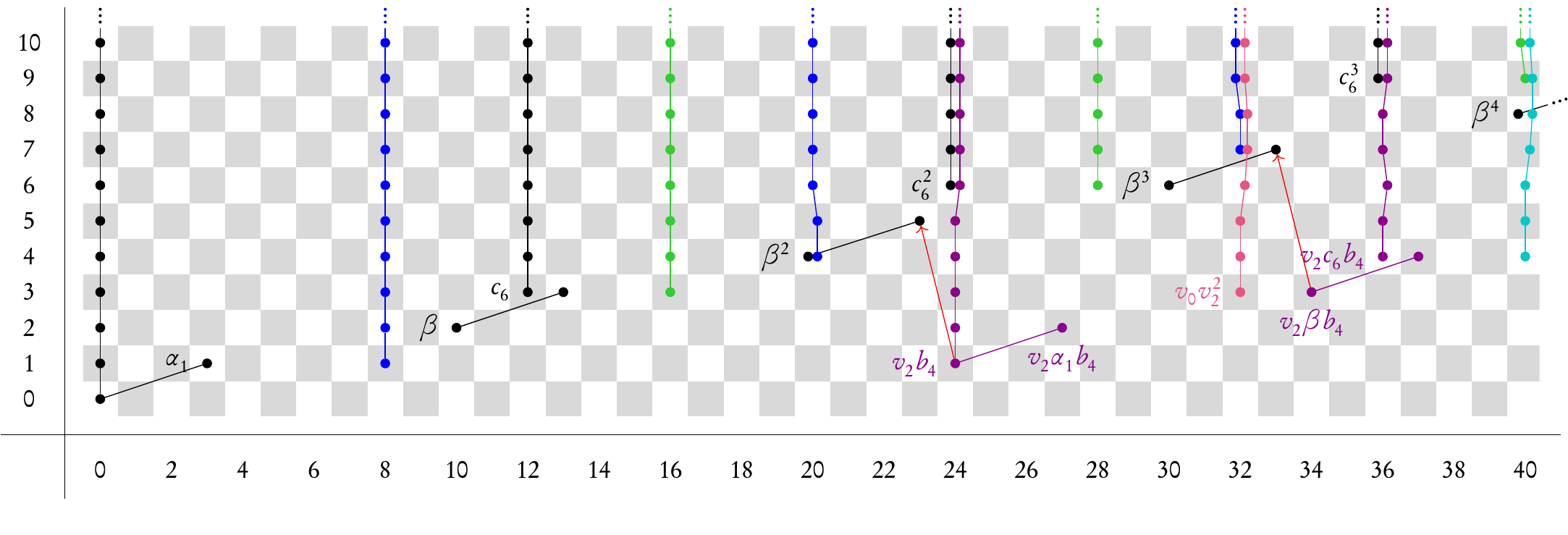}
	\caption{}
\end{subfigure}
\begin{subfigure}{0.49\textwidth}
	\includegraphics[angle=90,height=\textheight]{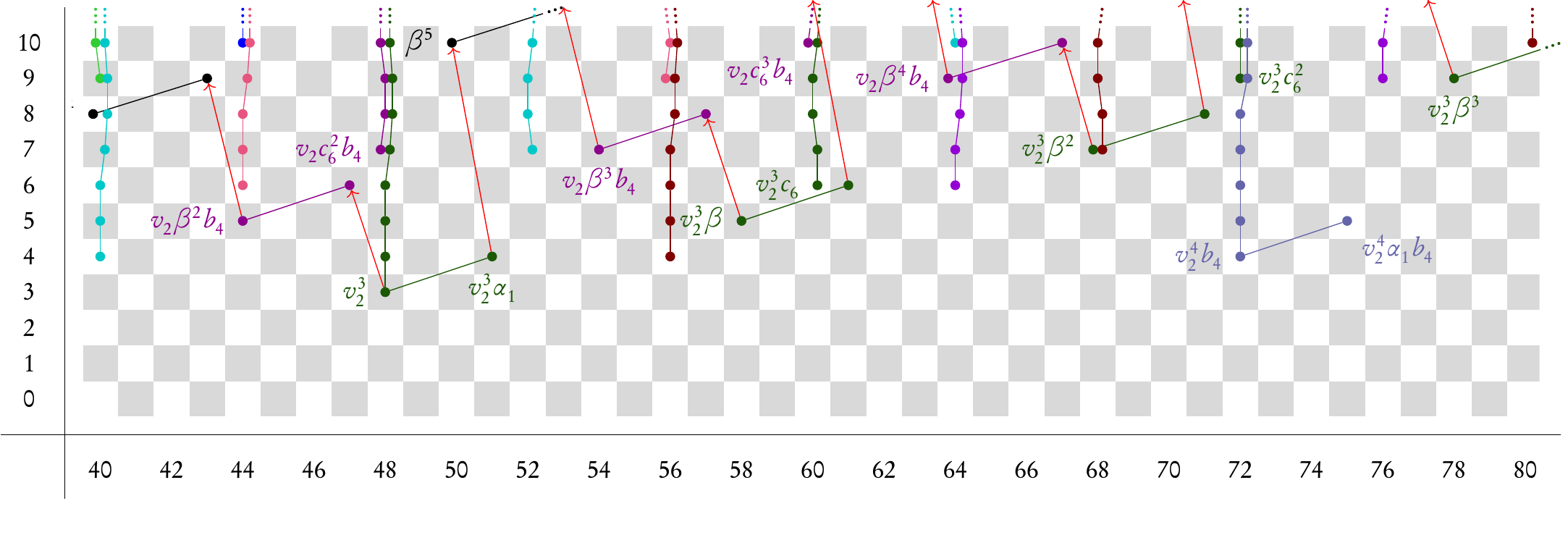}
	\caption{}
\end{subfigure}
\caption{Adams $E_3$-page in stems 0-80 with $d_3$ to $d_6$-differentials}\label{fig: Adams E_3 with differentials}
\end{figure} 

\begin{figure}
\centering
\begin{subfigure}{0.49\textwidth}
	\includegraphics[angle=90,height=\textheight]{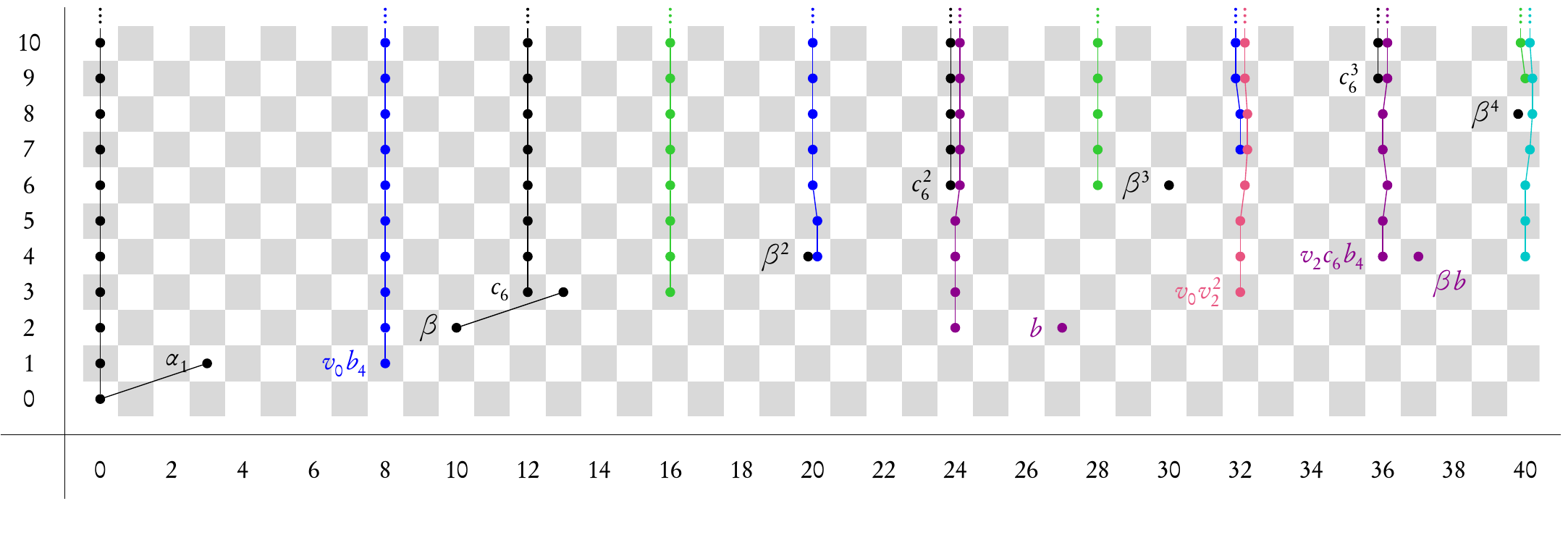}
	\caption{}
\end{subfigure}
\begin{subfigure}{0.49\textwidth}
	\includegraphics[angle=90,height=\textheight]{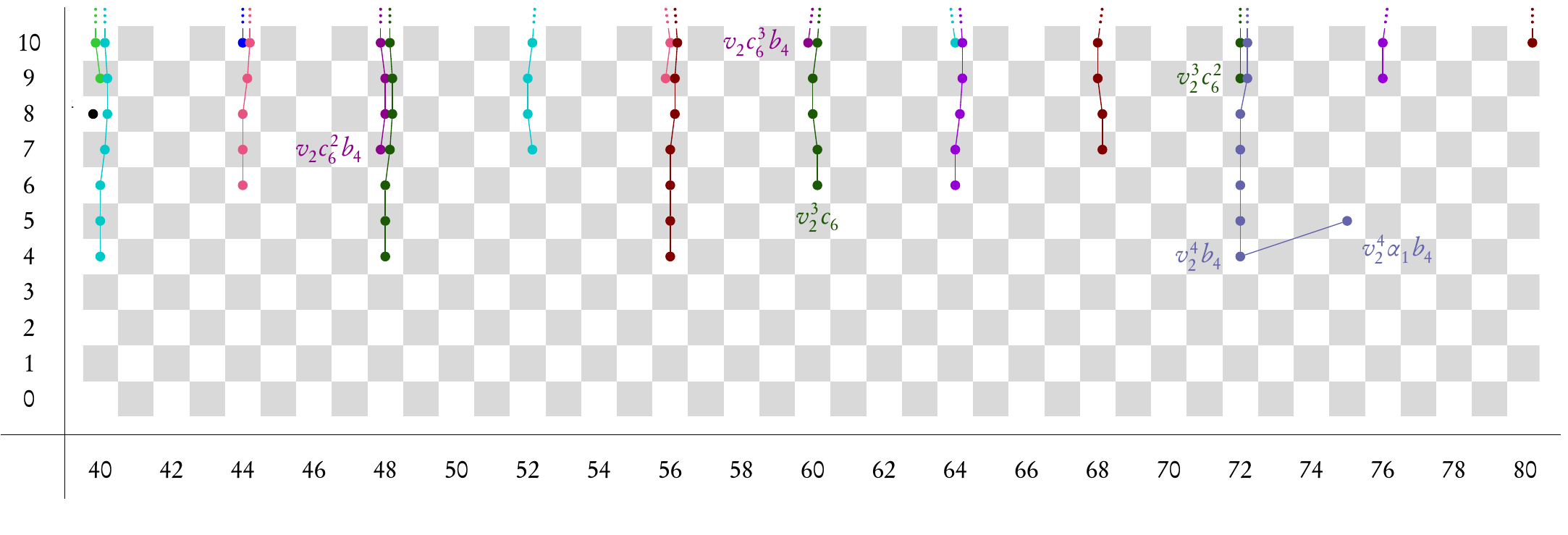}
	\caption{}
\end{subfigure}
\caption{Adams $E_\infty$-page in stems 0-80}\label{fig: Adams E_infty with differentials}
\end{figure} 

\subsection{Hidden extensions}\label{subsec: hidden extensions}

In the previous subsection we showed that the Adams spectral sequence for $\tmf$ collapses at $E_7$ and we completely computed this page via \eqref{eq: Delta cubed decomposition on E7}. We've already established one hidden extension in Proposition \ref{prop: hidden extension b alpha1 = beta cubed}, which corresponds to the single hidden extension occurring in the Adams-Novikov spectral sequence for $\tmf$. 

However, there are several relations in $\pi_*\tmf$ which are apparent on the Adams-Novikov $E_\infty$-page appearing in the 0-line, but which are hidden from the perspective of the Adams spectral sequence. We make several observations. 

\begin{prop}\label{lem: v0b4 detects c4}
	The class $v_0b_4$ in $E_\infty$ detects the class $c_4$ in $\pi_8\tmf$ up to a unit.
	\end{prop}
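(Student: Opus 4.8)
The plan is to identify which $E_\infty$-class detects $c_4 \in \pi_8\tmf$ by exploiting the edge homomorphism of Theorem~\ref{thm: edge homomorphism} together with the rational computation. First I would recall that by Corollary~\ref{cor: rational homotopy of tmf} the rational homotopy is $\Q[c_4, c_6]$, so $c_4$ is a torsion-free class in $\pi_8\tmf$ and its image under the edge homomorphism $\pi_*\tmf \to MF_*(\Z_{(3)})$ is, by Theorem~\ref{thm: edge homomorphism}, the modular form $c_4$ of weight $4$ (topological degree $8$), which lies in the image. Thus $c_4$ is a genuine, nonzero, torsion-free element of $\pi_8\tmf$, and it must be detected by a class of Adams filtration equal to the $v_0$-adic valuation dictated by the $v_0$-inverted computation.

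Next I would pin down the relevant $v_0$-tower. From the $v_0$-localized Adams $E_2$-term computed in \textsection\ref{sec: rational homotopy of tmf}, namely $P(v_0^{\pm 1}, v_1^3, v_2)\otimes E(b_4)$, the torsion-free part in stem $8$ is the single $v_0$-tower generated (after inverting $v_0$) by the class in degree $8$. On the $E_\infty$-page of the honest (non-localized) Adams spectral sequence, the surviving classes in stem $8$ detecting a torsion-free element form exactly one $v_0$-tower, and by Proposition~\ref{prop: Adams E3} together with the differentials of the previous subsection the bottom of this tower is $v_0b_4$: the class $b_4$ itself supports the $d_2$-differential $d_2(b_4)\,\dot{=}\,\alpha_2$ of Proposition~\ref{prop: d2b4}, so $b_4$ does not survive, and what remains of its pattern on the $E_3$-page is $\Ext_{A(1)_*}(\F_3)/(\alpha_1,\alpha_2,\beta)\cdot\{v_0b_4\}$, a $v_0$-tower starting at $v_0b_4$. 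By Proposition~\ref{prop: pattern 2' are pc} this pattern consists of permanent cycles, so $v_0b_4$ survives to $E_\infty$.

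Finally I would conclude by a filtration-matching argument. Since $c_4$ is torsion-free and $v_0b_4$ generates the unique $v_0$-tower surviving in stem $8$, the class $v_0b_4$ must detect some power $3^k$ times a unit multiple of $c_4$; to see that it detects $c_4$ exactly (up to a unit) rather than a higher multiple, I would compare with the $v_0$-localized spectral sequence, where $v_0b_4$ is the bottom generator of the tower, meaning its Adams filtration equals the minimal filtration among elements detecting the torsion-free class in this stem. Because the edge homomorphism places $c_4$ (not $3c_4$) in the image, $c_4$ is detected at this minimal filtration, forcing $v_0b_4\,\dot{=}\,c_4$ at the level of associated graded.

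The main obstacle I anticipate is the last step: ruling out that $v_0b_4$ detects a nontrivial multiple such as $3c_4$ rather than $c_4$ itself. This requires knowing the precise $v_0$-divisibility of $c_4$ in $\pi_8\tmf$, which is where I would lean most heavily on Theorem~\ref{thm: edge homomorphism}—specifically the fact that the edge homomorphism is a ring map whose image \emph{contains $c_4$} (and not merely a multiple of it), so that $c_4$ sits at the bottom of the $v_0$-tower and is detected in the lowest possible Adams filtration, matching that of $v_0b_4$.
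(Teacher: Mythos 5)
Your proposal is correct and follows essentially the same route as the paper: both use Theorem \ref{thm: edge homomorphism} to produce the torsion-free class $c_4\in\pi_8\tmf$ and then observe that the only candidate in stem $8$ of $E_\infty$ is the $v_0$-tower whose bottom class is $v_0b_4$. Your final step ruling out that $v_0b_4$ detects a multiple such as $3c_4$ rather than $c_4$ itself is a careful elaboration of a point the paper's one-line proof leaves implicit, and you resolve it correctly by noting that $c_4$ (and not merely a multiple of it) lies in the image of the edge homomorphism, so the topological $c_4$ generates $\pi_8\tmf$ and sits at the bottom of the tower.
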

	\begin{proof}
		From Theorem \ref{thm: edge homomorphism}, the modular form $c_4$ is in $\pi_*\tmf$. Since $c_4$ is of degree 8 and a torsion free class, it must be detected by a class in stem 8 in the $E_\infty$-page which supports an entire $v_0$-tower. The only such class is $v_0b_4$. 
	\end{proof} 

Looking at our chart for $E_\infty$, we find that there is a single $v_0$-tower in the $16$-stem which is generated by $v_0^2v_2$. This implies the following, 

\begin{prop}
	The class $v_0^2v_2$ detects the class $c_4^2$ in $\pi_{16}\tmf$ and we have the hidden extension $(v_0b_4)\cdot (v_0b_4)\,\dot{=}\, v_0^2v_2$ in the $E_\infty$-term.
\end{prop}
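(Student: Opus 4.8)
The plan is to establish both claims by locating $c_4^2$ in $\pi_{16}\tmf$ and then forcing the multiplicative extension through the $E_\infty$-term's module structure. First I would invoke Theorem \ref{thm: edge homomorphism}: since the edge homomorphism $\pi_*\tmf \to MF_*(\Z_{(3)})$ is a ring map whose image contains $c_4$, the square $c_4^2$ lies in the image as well and is a torsion-free class in $\pi_{16}\tmf$. A torsion-free element must be detected by a class in stem $16$ supporting an entire $v_0$-tower on $E_\infty$, just as in the proof of Proposition \ref{lem: v0b4 detects c4}. Consulting the $E_\infty$-chart (Figure \ref{fig: Adams E_infty with differentials}), the only $v_0$-tower in the $16$-stem is the one generated by $v_0^2v_2$, so $v_0^2v_2$ must detect $c_4^2$.

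Next I would produce the hidden extension. By Proposition \ref{lem: v0b4 detects c4}, $v_0b_4$ detects $c_4$, so $(v_0b_4)^2$ detects $c_4^2 = c_4\cdot c_4$ in the associated graded, provided this product is nonzero in $E_\infty$. The subtlety is that on the $E_3$-term $b_4$ generates Pattern 2' and $v_0b_4$ lies in $\Ext_{A(1)_*}(\F_3)/(\alpha_1,\alpha_2,\beta)\cdot\{v_0b_4\}$, while the class $v_0^2v_2$ that detects $c_4^2$ lives in a \emph{different} summand of the $E_\infty$-decomposition, specifically in the Pattern 2' piece generated by $v_0^2v_2$ (cf. Proposition \ref{prop: Adams E3}). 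Because of the periodicity relabelling in \eqref{eq: Delta cubed decomposition} these generators sit in distinct filtrations, so the product $c_4\cdot c_4$ in $\pi_{16}\tmf$, while detected by $v_0^2v_2$, is \emph{not} detected by the naive product $(v_0b_4)^2$ computed in $E_\infty$. This is exactly what makes the extension hidden.

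To make this precise, I would argue that $(v_0b_4)^2 = 0$ in $E_\infty$ for filtration reasons: $v_0b_4$ has Adams filtration $s=2$, so its square would have filtration $4$ in stem $16$, but inspection of the chart shows the entire $v_0$-tower in stem $16$ starts at filtration $2$ with $v_0^2v_2$, and there is no class of filtration $4$ that could be the honest product lying above a filtration-$2$ element within the same tower. Since $c_4^2\ne 0$ in $\pi_{16}\tmf$ and is detected by $v_0^2v_2$, whereas the product of the detecting classes vanishes in the associated graded, the multiplication $c_4\cdot c_4$ jumps filtration; this is precisely the hidden extension $(v_0b_4)\cdot(v_0b_4)\,\dot{=}\, v_0^2v_2$. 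The main obstacle is verifying that $v_0^2v_2$, and no other class, detects $c_4^2$ and that the filtration jump is genuine rather than an artifact of an indeterminate choice of detecting class; this is handled by the uniqueness of the $v_0$-tower in stem $16$ together with the fact, recorded in Proposition \ref{prop: v_1^3 detects c6} and Theorem \ref{thm: edge homomorphism}, that the torsion-free part of $\pi_{16}\tmf$ is one-dimensional over $\Z_{(3)}$ and generated by $c_4^2$.
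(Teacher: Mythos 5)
Your proposal is correct and follows essentially the same route as the paper, which likewise deduces both claims from Theorem~\ref{thm: edge homomorphism} (so $c_4^2$ is a torsion-free class in $\pi_{16}\tmf$) together with the observation that the unique $v_0$-tower in the $16$-stem of $E_\infty$ is the one generated by $v_0^2v_2$. One small correction to your filtration bookkeeping: $v_0b_4$ has Adams filtration $1$ (since $b_4$ is a comodule primitive of filtration $0$) and $v_0^2v_2$ has filtration $3$, and the cleanest reason that $(v_0b_4)\cdot(v_0b_4)=0$ in $E_\infty$ is simply that $b_4$ is an exterior generator, so $b_4^2=0$ already on the $E_2$-page.
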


\begin{rmk}
	In light of Proposition \ref{lem: v0b4 detects c4} and Proposition \ref{prop: v_1^3 detects c6}, we will abuse notation and write $v_0b_4$ as $c_4$ and $v_1^3$ as $c_6$.
\end{rmk}

We can also say which classes are detecting the various classes involving $\Delta$ in $\pi_*\tmf$. We will rename some classes in order to give more streamlined expressions. We will rename $b_4v_2$ by $v_2^{3/2}$. Thus, for example, the class $v_2^{9/2}$ refers to $b_4v_2^4$. We will use the expression $(v_2^{3/2})^k$, when $k = 2\ell$, to mean $v_2^{3\ell}$, while when $k = 2\ell+1$ this expression stands for $b_4v_2\cdot v_2^{3\ell} = b_4v_2^{3\ell+1}$. At the moment, we have introduced this notation more for convenience, it is \emph{not} reflective of a multiplicative structure on any page of this spectral sequence. Indeed, on $E_\infty$, the square of $b_4v_2$ is 0. However, this notation is motivated by a certain hidden extension which will appear shortly. 

Note that from the results of the previous section, we have

\begin{lem}
	When $j\equiv 1,2\mod 3$, the classes $(v_2^{3/2})^j$ support a differential, and in this case the classes $v_0(v_2^{3/2})^j$ are permanent cycles. 
\end{lem}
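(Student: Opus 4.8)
The plan is to unwind the notation, reduce to finitely many cases by $v_2^9$-periodicity, read off the first assertion from the differentials already in hand, and deduce the second from the relation $v_0\alpha_1 = 0$ in $\Ext_{A(1)_*}(\F_3)$.

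First I would record that $(v_2^{3/2})^{j+6} \,\dot=\, v_2^9\,(v_2^{3/2})^j$ directly from the definition of this notation, and that $v_2^9$ is a permanent cycle by Proposition \ref{prop: v_2^9 is a pc}. From the decomposition \eqref{eq: Delta cubed decomposition}, multiplication by $v_2^9$ carries each summand isomorphically onto the next, hence is injective on every page $E_r$ with $r\geq 3$. Combined with the Leibniz rule $d_r(v_2^9 x)\,\dot=\, v_2^9 d_r(x)$, this shows that both assertions propagate under multiplication by $v_2^9$. It therefore suffices to treat the residues $j\in\{1,2,4,5\}$, where $(v_2^{3/2})^j$ equals $b_4v_2$, $v_2^3$, $v_2^6$, and $b_4v_2^7$ respectively.

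For the first assertion, each of these four monomials already supports a differential: $d_4(b_4v_2)\,\dot=\,\beta^2\alpha_1$ (Proposition \ref{prop: d4 b4v2}), $d_3(v_2^3)\,\dot=\, b_4v_2\beta^2\alpha_1$ (Proposition \ref{prop: d3 v2cubed = b4v2beta2alpha1}), the propagated differential $d_3(v_2^6)\,\dot=\, v_2^4b_4\beta^2\alpha_1$, and $d_4(v_2^7b_4)\,\dot=\, v_2^6\beta^2\alpha_1$. For the second assertion, the key point is that in each case the target is divisible by $\alpha_1$. Since the Adams differentials are linear over $\Ext_{A(1)_*}(\F_3)$ and $v_0\alpha_1=0$ there, multiplying each source by $v_0$ gives $d_r(v_0(v_2^{3/2})^j)\,\dot=\, v_0\cdot(\text{target}) = 0$ on the page $r\in\{3,4\}$ carrying the primary differential; one also checks that $v_0(v_2^{3/2})^j$ is a $d_3$-cycle in the two $d_4$-cases, since there $(v_2^{3/2})^j$ itself survives to $E_4$. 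To rule out higher differentials I would use that each $v_0(v_2^{3/2})^j$ lies in an even stem (namely $24,48,96,120$), so any differential it supports lands in the adjacent odd stem; by Proposition \ref{prop: Adams E3} the only odd classes on $E_3$ are the $\alpha_1$-multiples in Pattern 1', and by inspection of Figure \ref{fig: Adams E_3 term 0-80} these are exactly the classes consumed by the $d_3$- and $d_4$-differentials listed above. Thus no target survives, and each $v_0(v_2^{3/2})^j$ is a permanent cycle.

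The main obstacle is the finite verification in the last step: one must confirm that in each of the stems $23,47,95,119$ there is no odd class left after the page on which the corresponding primary differential fires. This is routine given the sparseness of the odd-degree part of $E_3$ and the observation that every potential $\alpha_1$-multiple target is itself hit by one of the established differentials, but it does require care to ensure that no longer differential (for instance a $d_6$) could secretly supply an alternative target before the relevant page is reached.
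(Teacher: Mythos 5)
Your proposal is correct and follows essentially the same route as the paper, which states this lemma without proof as a consequence of ``the results of the previous section'': the four established differentials $d_4(b_4v_2)$, $d_3(v_2^3)$, $d_3(v_2^6)$, $d_4(v_2^7b_4)$ give the first assertion, while $\Ext_{A(1)_*}(\F_3)$-linearity together with $v_0\alpha_1=0$, the parity/sparseness argument (the same one used in the paper's proof that the spectral sequence collapses at $E_7$), and $v_2^9$-periodicity give the second. You have merely made explicit the bookkeeping the paper leaves implicit, and your reduction to $j\in\{1,2,4,5\}$ matches the paper's use of the decomposition $E_3(\tmf)\cong\bigoplus_{k\geq 0}M\cdot\{v_2^{9k}\}$.
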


We can determine what these classes detect in $\pi_*(\tmf)$. 

\begin{cor}\label{cor: detecting powers of Delta}
	For $j\equiv 1,2\mod 3$, the classes $v_0(v_2^{3/2})^j$ detect $3\Delta^j$ up to a unit. For $j\equiv 0 \mod 3$, the class $(v_2^{3/2})^j$ detects $\Delta^j$, up to a unit. 
\end{cor}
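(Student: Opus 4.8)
The plan is to combine the edge homomorphism of Theorem \ref{thm: edge homomorphism} with the $v_0$-tower structure of the $E_\infty$-page, using Adams filtration to single out the $\Delta$-direction. First I would record the degree bookkeeping: since $|v_2| = 16$ and $|b_4| = 8$, the class $(v_2^{3/2})^j$ lies in stem $24j = |\Delta^j|$, matching the target modular form. Next I would use the fact that the image of the edge homomorphism is the subring generated by $c_4, c_6, 3\Delta, 3\Delta^2, \Delta^3$. From this description one checks that $\Delta^j$ lies in the image exactly when $j \equiv 0 \bmod 3$, while for $j \equiv 1,2 \bmod 3$ the smallest positive multiple of $\Delta^j$ in the image is $3\Delta^j$. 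Because the edge homomorphism is a rational isomorphism, the bottom class of each infinite $v_0$-tower on $E_\infty$ detects an indivisible generator of a $\Z_{(3)}$-summand of $\pi_*\tmf$, which the edge map sends to a generator of the image lattice in the corresponding direction.

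The device that separates the $\Delta$-direction from the classes $c_4^a c_6^b$ is Adams filtration. A short computation shows that any monomial $c_4^a c_6^b$ in stem $24j$ (so $2a + 3b = 6j$) is detected in filtration $a + 3b$, which is minimized at $c_4^{3j}$ with value $3j$; by contrast $(v_2^{3/2})^j$ lies in filtration $3\ell$ or $3\ell+1$ when $j = 2\ell$ or $2\ell+1$, hence strictly below $3j$ for $j \geq 1$. Consequently $(v_2^{3/2})^j$ and its $v_0$-multiples cannot detect any polynomial in $c_4$ and $c_6$, so they must detect a class whose image under the edge homomorphism is a multiple of $\Delta^j$. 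This is the mechanism by which the division by powers of $3$ in $\Delta = (c_4^3 - c_6^2)/1728$ lowers the Adams filtration.

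With these in place I would argue case by case. For $j \equiv 0 \bmod 3$, the class $(v_2^{3/2})^j$ is a permanent cycle (the subcases $j = 6k$ and $j = 6k+3$ reduce to $v_2^9$ and $b_4v_2^4$ being permanent cycles, together with $v_2^9$-periodicity and multiplicativity of the spectral sequence) sitting at the bottom of a $v_0$-tower arising from Pattern 1'. It therefore detects the indivisible generator of the $\Z_{(3)}$-summand, which the edge homomorphism identifies with the lattice generator $\Delta^j$, up to a unit. For $j \equiv 1,2 \bmod 3$, the class $(v_2^{3/2})^j$ supports a higher differential and dies, so the surviving $v_0$-tower begins one filtration higher, at the permanent cycle $v_0(v_2^{3/2})^j$; this class detects the generator of the image lattice in the $\Delta$-direction, which is now $3\Delta^j$, again up to a unit.

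The main obstacle I expect is pinning down the exact multiple ($1$ versus $3$) and confirming that these particular classes, rather than a competing torsion-free class in the same stem, detect the $\Delta$-direction. Both points rest on the filtration comparison above together with the precise description of the image of the edge homomorphism; the extra factor of $v_0$ appearing in the $j \not\equiv 0 \bmod 3$ case is exactly what records the extra factor of $3$ forced by that image.
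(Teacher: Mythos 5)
Your strategy is the same one the paper implicitly relies on: Theorem \ref{thm: edge homomorphism} places $\Delta^j$ (for $j\equiv 0\bmod 3$) or $3\Delta^j$ (otherwise) in $\pi_{24j}\tmf$, these stems are torsion-free, and torsion-free classes must be detected by classes supporting infinite $v_0$-towers on $E_\infty$ --- the paper in fact states the corollary with no written proof, following the template of Proposition \ref{lem: v0b4 detects c4} and Proposition \ref{prop: v_1^3 detects c6}. What you add, and what the paper leaves entirely implicit, is the device needed to distinguish the $\Delta^j$-tower from the other $j$ towers in stem $24j$ (those detecting monomials in $c_4,c_6$): a comparison of Adams filtrations. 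That is the right mechanism, and it does close the argument, so your write-up is if anything more complete than the paper's.

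Two caveats, both fixable within your framework. First, your claim that $c_4^ac_6^b$ is detected in filtration exactly $a+3b$ is false: hidden $v_0$-extensions raise filtration (for instance $c_4^2$ is detected by $v_0^2v_2$ in filtration $3$, not $2$). Fortunately you only need the superadditive lower bound $\mathrm{filt}(c_4^ac_6^b)\geq a+3b\geq 3j$, compared against $\mathrm{filt}\bigl(v_0^{\epsilon}(v_2^{3/2})^j\bigr)\leq \lfloor 3j/2\rfloor+1<3j$, and that direction is sound. Second, for $j\geq 2$ the dichotomy ``polynomial in $c_4,c_6$'' versus ``multiple of $\Delta^j$'' is incomplete: the image lattice also contains mixed monomials $c_4^ac_6^b\cdot 3^{\epsilon}\Delta^c$ with $0<c<j$, which are neither, so ruling out polynomials in $c_4,c_6$ alone does not yet force the image of the detected class to hit the $\Delta^j$-direction with unit (times $3^\epsilon$) coefficient. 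This is repaired by induction on $j$: granting the corollary through stem $24(j-1)$, such a mixed term has filtration at least $3(j-c)$ plus the filtration of the class detecting $3^{\epsilon}\Delta^c$, which again strictly exceeds the filtration of your candidate tower bottom. Since your closing paragraph explicitly anticipates exactly this point and names the filtration comparison plus the image description as the tools, I regard this as an imprecision to be filled in rather than a wrong step.
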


Because these correspond to multiples of powers of $\Delta$, this implies a family of hidden extensions. 

\begin{cor}
	In $E_\infty$, we have the following hidden extensions for every $\ell\geq 0$, 
	\[
	(v_0v_2^{3/2})\cdot v_0(v_2^{3/2})^{2\ell+1} \dot{=} v_0^2v_2^{3\ell+3}
	\]
	We have the hidden extensions for odd $j$
	\[
	(v_2^{3/2})^3\cdot (v_2^{3/2})^{3j} \dot{=} (v_2^{3/2})^{3(j+1)}.
	\]
\end{cor}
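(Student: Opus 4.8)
The plan is to read off both families of extensions from the ring structure on the torsion-free part of $\pi_*\tmf$, which is determined by the edge homomorphism of Theorem \ref{thm: edge homomorphism} together with the detection statements of Corollary \ref{cor: detecting powers of Delta}. The structural fact that makes these extensions \emph{hidden} is that $b_4$ is exterior in $H_*\tmf$, so $b_4^2 = 0$ on every page of the Adams spectral sequence. Rewriting the left-hand sides in terms of $b_4$ and $v_2$, the first product is $(v_0b_4v_2)\cdot(v_0 b_4 v_2^{3\ell+1})$, which is a multiple of $b_4^2$, and the second product is $(b_4v_2^4)\cdot(v_2^{3/2})^{3j}$, which also carries a factor of $b_4^2$ since $3j$ is odd (so $(v_2^{3/2})^{3j}$ contains a $b_4$). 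Hence in each case the naive product is zero in $E_\infty$, so any nonzero product in $\pi_*\tmf$ is genuinely a hidden extension, and it remains only to compute that product and name its detecting class.

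First I would use Corollary \ref{cor: detecting powers of Delta} to identify the homotopy classes detected by the factors: up to a unit, $v_0v_2^{3/2}$ detects $3\Delta$, $v_0(v_2^{3/2})^{2\ell+1}$ detects $3\Delta^{2\ell+1}$, while $(v_2^{3/2})^3$ and $(v_2^{3/2})^{3j}$ detect $\Delta^3$ and $\Delta^{3j}$. Since the edge homomorphism $\pi_*\tmf\to MF_*(\Z_{(3)})$ is a ring map, a rational isomorphism, and has image the subring generated by $c_4,c_6,3\Delta,3\Delta^2,\Delta^3$, the products are computed in $MF_*(\Z_{(3)})$: one gets $(3\Delta)(3\Delta^{2\ell+1}) = 9\Delta^{2\ell+2}$ and $\Delta^3\cdot\Delta^{3j} = \Delta^{3(j+1)}$. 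Both results lie in the image (a subring) and are nonzero, so since the edge homomorphism is injective on the torsion-free part, the corresponding homotopy products are exactly the nonzero torsion-free classes mapping to them.

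It then remains to name the detecting classes of these products. For the second extension this is immediate from Corollary \ref{cor: detecting powers of Delta}, as $3(j+1)\equiv 0\mod 3$ forces $\Delta^{3(j+1)}$ to be detected by $(v_2^{3/2})^{3(j+1)}$, which carries no $b_4$ and survives. For the first extension I must show $9\Delta^{2\ell+2}$ is detected by $v_0^2v_2^{3\ell+3}$. The class $v_2^{3\ell+3}$ generates a Pattern~1' summand carrying a $v_0$-tower, and $v_0v_2^{3\ell+3}$ is a permanent cycle by the lemma preceding Corollary \ref{cor: detecting powers of Delta}, so $v_0^2v_2^{3\ell+3}$ survives. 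A short case analysis on $2\ell+2$ modulo $3$ then finishes: if $2\ell+2\equiv 0\mod 3$, Corollary \ref{cor: detecting powers of Delta} gives that $v_2^{3\ell+3}$ detects $\Delta^{2\ell+2}$; otherwise $v_0v_2^{3\ell+3}$ detects $3\Delta^{2\ell+2}$. In either case, applying $v_0$ the appropriate number of times shows $v_0^2v_2^{3\ell+3}$ detects $9\Delta^{2\ell+2}$, as desired.

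The main obstacle is this last identification, which hinges on the absence of hidden $v_0$-extensions along the relevant torsion-free $v_0$-tower: one must know that multiplying by $v_0$ records multiplication by $3$ with no unexpected jump in filtration. This is where I would invoke that the degrees of powers of $\Delta$ are multiples of $24$ and hence avoid the torsion stems $3,10,13,20,27,30,37,40$ modulo $72$, so that the relevant $v_0$-tower is a copy of $\Z_3$ on which $v_0$ faithfully records multiplication by $3$, with Corollary \ref{cor: detecting powers of Delta} pinning down exactly which tower carries the given power of $\Delta$.
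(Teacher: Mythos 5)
Your proposal is correct and follows essentially the same route as the paper, which derives these extensions immediately from Corollary \ref{cor: detecting powers of Delta} and the ring structure of $MF_*(\Z_{(3)})$ via the edge homomorphism of Theorem \ref{thm: edge homomorphism} (``Because these correspond to multiples of powers of $\Delta$, this implies a family of hidden extensions''). Your write-up simply makes explicit the details the paper leaves implicit: the vanishing of the naive products because both factors carry a $b_4$, the computation $(3\Delta)(3\Delta^{2\ell+1})=9\Delta^{2\ell+2}$ and $\Delta^3\cdot\Delta^{3j}=\Delta^{3(j+1)}$ in modular forms, and the identification of the detecting $v_0$-towers in the torsion-free stems.
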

This corollary justifies our choice of notation. Finally, Theorem \ref{thm: edge homomorphism} and the famous relation of modular forms
\[
c_4^3-c_6^2 = 1728 \Delta = 2^3 3^3\Delta
\]
implies a hidden extension in the $E_\infty$-term. 

\begin{prop}
	There is a hidden extension in $E_\infty(\tmf)$ given by 
	\[
	c_4\cdot (v_0^2v_2)\, \dot{=}\, v_0^3b_4v_2+ c_6^2
	\]
\end{prop}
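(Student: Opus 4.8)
The plan is to exploit the edge homomorphism of Theorem~\ref{thm: edge homomorphism} together with the detection results already established for $c_4$, $c_6$, and $\Delta$. First I would recall the three facts we need: the class $v_0b_4$ detects $c_4$ (Proposition~\ref{lem: v0b4 detects c4}), the class $v_0^2v_2$ detects $c_4^2$, and the class $v_1^3 = c_6$ detects $c_6$ (Proposition~\ref{prop: v_1^3 detects c6}). The modular forms relation $c_4^3 - c_6^2 = 1728\,\Delta = 2^3 3^3\Delta$ lives in $MF_*$, and since the edge homomorphism is a ring map which is a rational isomorphism, this relation must be reflected in $\pi_{24}\tmf$ after multiplying through appropriately.

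The key computation is to evaluate the product $c_4\cdot c_4^2 = c_4^3$ in $\pi_{24}\tmf$. On the level of $E_\infty$-detecting classes, $c_4 \cdot (v_0^2v_2)$ is the product detecting $c_4^3$, up to a unit. Now $c_4^3 = c_6^2 + 2^3 3^3\Delta$ in $MF_*$, so in $\pi_{24}\tmf$ the class $c_4^3$ equals $c_6^2$ plus a multiple of $\Delta$ that is divisible by $27 = 3^3$. Since we are working $3$-completely and $\Delta$ itself is detected by a class supporting a $v_0$-tower, the term $2^3 3^3 \Delta$ is detected in strictly higher $v_0$-filtration than $c_6^2$; multiplication by $3$ corresponds to multiplication by $v_0$ on $E_\infty$. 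Thus I would identify which $E_\infty$-class in stem $24$ detects the $\Delta$-contribution: the relevant class is $v_0^3 b_4 v_2$, which lives three $v_0$-steps up from the class detecting $\Delta$ (by Corollary~\ref{cor: detecting powers of Delta}, the class $v_0(v_2^{3/2})^j$ detects $3\Delta^j$, so the factor of $27$ pushes us to $v_0^3 b_4 v_2$). Meanwhile $c_6^2$ is detected by $c_6^2$ on the $0$-line. The sum of these two detecting classes is exactly $v_0^3 b_4 v_2 + c_6^2$, giving the stated hidden extension.

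Concretely, the steps in order are: (1) write down the modular forms relation and observe it holds in $\pi_{24}\tmf$ via the edge homomorphism; (2) record that $c_4$, $c_4^2$, and $c_6^2$ are detected by $v_0b_4$, $v_0^2v_2$, and $c_6^2$ respectively; (3) analyze the $3$-adic (equivalently $v_0$-adic) filtration of each summand $c_6^2$ and $2^33^3\Delta$ on the right-hand side, using that multiplication by $3$ raises $v_0$-filtration and that $\Delta$ is detected in the relevant stem; (4) conclude that the two summands are detected by $c_6^2$ and $v_0^3 b_4 v_2$, so their sum gives the hidden extension $c_4\cdot(v_0^2v_2)\,\dot{=}\,v_0^3b_4v_2 + c_6^2$.

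The main obstacle will be step~(3): carefully accounting for the interplay between the numerical factor $1728 = 2^3 3^3$ and the $v_0$-filtration in the $3$-complete setting. One must be sure that the factor $2^3$ (a $3$-adic unit) does not alter the detecting class, while the factor $3^3$ precisely raises the $v_0$-filtration by three, landing on $v_0^3 b_4v_2$ rather than on some intermediate multiple of $\Delta$. This requires knowing exactly how $\Delta$ and its multiples $3\Delta$ sit in the $v_0$-towers of the $E_\infty$-chart in the $24$-stem, which one reads off from Corollary~\ref{cor: detecting powers of Delta} and the $E_\infty$-chart (Figure~\ref{fig: Adams E_infty with differentials}). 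Once the filtration bookkeeping is pinned down, the hidden extension follows immediately from the ring-map property of the edge homomorphism.
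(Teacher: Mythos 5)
Your proposal is correct and follows essentially the same route as the paper: the paper's entire justification is the one-line remark that Theorem~\ref{thm: edge homomorphism} together with the relation $c_4^3 - c_6^2 = 1728\,\Delta = 2^3 3^3 \Delta$ implies the extension, and your argument simply fills in the details (detection of $c_4$, $c_4^2$, $c_6$ by $v_0b_4$, $v_0^2v_2$, $v_1^3$, the fact that $2^3$ is a $3$-adic unit, and that $27\Delta = 9\cdot(3\Delta)$ lands two $v_0$-steps above $v_0b_4v_2$). One small caveat in your wording: $\Delta$ itself is not an element of $\pi_{24}\tmf$ (only $3\Delta$ is, detected by $v_0b_4v_2$ per Corollary~\ref{cor: detecting powers of Delta}), but your parenthetical correctly routes the bookkeeping through $3\Delta$, so the argument stands.
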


These hidden extensions, of course, propagate themselves throughout the $E_\infty$-term. There are no hidden extensions beyond the ones mentioned above. 

\begin{rmk}
	It is rather unsatisfying that these hidden extensions were determined by using the known multiplicative structure in $\pi_*\tmf$. It would be nice to have arguments from first principles. It would seem that this would require knowing Massey product descriptions of various classes, such as $c_4, v_2^{3/2}$, and so on. But the author was unable to find such descriptions. 
\end{rmk}



\bibliographystyle{alpha}
\bibliography{ASSfortmf}

\end{document}